\newcommand{\ad}{\operatorname{ad}}
\newcommand{\Ad}{\operatorname{Ad}}
\newcommand{\APS}{\operatorname{APS}}
\newcommand{\Aut}{\operatorname{Aut}}
\newcommand{\bas}{\operatorname{bas}}
\newcommand{\C}{{\mathbb C}}
\newcommand{\ch}{\operatorname{ch}}
\newcommand{\Dirac}{\operatorname{Dirac}}
\newcommand{\Dom}{\operatorname{Dom}}
\newcommand{\End}{\operatorname{End}}
\newcommand{\HH}{\operatorname{H}}
\newcommand{\Id}{\operatorname{Id}}
\newcommand{\Image}{\operatorname{Im}}
\newcommand{\Ind}{\operatorname{Ind}}
\newcommand{\Index}{\operatorname{Index}}
\newcommand{\inv}{\operatorname{inv}}
\newcommand{\Isom}{\operatorname{Isom}}
\newcommand{\Ker}{\operatorname{Ker}}
\newcommand{\R}{{\mathbb R}}
\newcommand{\sign}{\operatorname{sign}}
\newcommand{\Spin}{\operatorname{Spin}}
\newcommand{\str}{\operatorname{tr}_s}
\newcommand{\Str}{\operatorname{Tr}_s}
\newcommand{\supp}{\operatorname{supp}}
\newcommand{\tr}{\operatorname{tr}}
\newcommand{\trs}{\operatorname{tr}_s}
\newcommand{\Tr}{\operatorname{Tr}}
\newcommand{\vol}{\operatorname{vol}}
\newcommand{\Weyl}{\operatorname{Weyl}}
\newcommand{\Z}{{\mathbb Z}}
\numberwithin{equation}{section}
\theoremstyle{plain}
\newtheorem{assumption}{Assumption}
\newtheorem{lemma}{Lemma}
\newtheorem{theorem}{Theorem}
\newtheorem{proposition}{Proposition}
\newtheorem{corollary}{Corollary}
\theoremstyle{definition}
\newtheorem{remark}{Remark}
\newtheorem{example}{Example}
\newtheorem{definition}{Definition}
\begin{document}
\title{The index of a transverse Dirac-type operator : the case of abelian Molino sheaf}
\author{Alexander Gorokhovsky and John Lott}
\address{Department of Mathematics\\
University of Colorado\\
Boulder, CO  80309-0395\\
USA}
\email{Alexander.Gorokhovsky@colorado.edu}
\address{Department of Mathematics\\
University of California - Berkeley\\
Berkeley, CA  94720-3840\\
USA} \email{lott@math.berkeley.edu}

\thanks{The research of the first author was partially supported by NSF grant
DMS-0900968. The research of the second author was partially supported
by NSF grant DMS-0903076}
\date{August 11, 2011}
\subjclass[2010]{58J22,53C12}

\begin{abstract}
We give a local formula for the index of a transverse Dirac-type operator on
a compact manifold with a Riemannian foliation, under the assumption that the
Molino sheaf is a sheaf of abelian Lie algebras.
\end{abstract}

\maketitle


\section{Introduction} \label{section1}

An important test case for noncommutative geometry comes from index theory on
compact foliated manifolds, as pioneered by Connes and his collaborators.  The most commonly
considered case is that of a leafwise
Dirac-type operator $D$.  Its index $\Index(D)$ lies in the $K$-theory of a
stabilized version of the
foliation groupoid algebra.  The local index theorem gives an explicit formula for the pairing
of $\Index(D)$ with cyclic cohomology classes of the foliation groupoid algebra. For
more information on this well-developed theory, we refer to
\cite{Connes (1994),Connes-Skandalis (1984),Gorokhovsky-Lott (2003),Gorokhovsky-Lott (2006)}.

This paper is concerned with a different index problem for compact foliated manifolds, namely
that of a transverse Dirac-type operator. Such an operator differentiates in
directions normal to the leaves. In order to make sense of the operator, we
must assume that the foliation is Riemannian, i.e. the normal bundle to the leaves
carries a holonomy-invariant inner product.
Then there is a notion of a
``basic'' Dirac-type operator $D$, a first-order differential
operator that acts on the
holonomy-invariant sections of a normal Clifford module.
It was shown by El Kacimi \cite{El-Kacimi (1990)}
and Glazebrook-Kamber \cite{Glazebrook-Kamber (1991)}
that $D$ is Fredholm
and hence has a well-defined index $\Index(D) \in \Z$.
(In fact, this is true for any basic transversally elliptic operator \cite{El-Kacimi (1990)}.)
 The index problem, which has been open
for twenty years \cite[Probl\`eme 2.8.9]{El-Kacimi (1990)}, is to give an explicit
formula for $\Index(D)$, in terms of the Riemannian foliation.
A prototypical example is that of a compact manifold foliated by points, in
which case the index is given by the Atiyah-Singer formula.

From the noncommutative geometry viewpoint, a leafwise Dirac-type operator is a family of
Dirac-type operators parametrized by the ``leaf space'' of the foliation, where the
``leaf space'' is defined in terms of algebras.  In contrast, a transverse Dirac-type operator
is a differential operator on such a ``leaf space''.  As will be seen,
the transverse index problem can be usefully formulated in terms of Riemannian groupoids.
Such groupoids also arose in work of Petrunin-Tuschmann on the collapsing theory of Riemannian manifolds
\cite{Petrunin-Tuschmann (1999)} and work of the second author on Ricci flow \cite{Lott (2007),Lott (2010)}.
Our interest in the transverse index problem comes from the
more general program of doing analysis on Riemannian groupoids.

To a Riemannian foliation ${\mathcal F}$ on a compact connected manifold $M$,
one can canonically associate a locally constant sheaf
of Lie algebras on $M$, called the Molino sheaf \cite{Molino (1988)}. Let
${\mathfrak g}$ denote the finite-dimensional Lie algebra which appears as the stalk of
the Molino sheaf. ({\it A priori}, it could be any finite-dimensional Lie algebra.)
If ${\mathfrak g} = 0$, which happens if and only if the leaves are compact,
then the leaf space is an orbifold and the
transverse index theorem reduces to Kawasaki's orbifold index theorem
\cite{Kawasaki (1978),Kawasaki (1981)}. In this paper we give the first local formula for
$\Index(D)$ in a case when ${\mathfrak g} \neq 0$. The case that we consider is when ${\mathfrak g}$ is an abelian Lie
algebra
$\R^k$.

To state our index theorem, we recall some information about Riemannian foliations.
Although the leaf space of such a foliation may be pathological (for example
non-Hausdorff), the space $W$
of leaf closures is a nice Hausdorff space which is stratified by manifolds.
A neighborhood of a point $w \in W$ is homeomorphic to $V_w/K_w$, where
$K_w$ is a compact Lie group that is canonically associated to $w$, and
$V_w$ is a representation space of $K_w$.

\begin{assumption} \label{assumption1}
1. The Molino Lie algebra is an abelian Lie algebra
$\R^k$.
\\
2. The Molino sheaf has trivial holonomy on $M$. \\
3. For all $w \in W$, the group $K_w$ is connected.
\end{assumption}

Here Assumptions \ref{assumption1}.1 and \ref{assumption1}.2 automatically hold if $M$ is simply-connected.

If Assumption \ref{assumption1} holds then $K_w$ is isomorphic to $T^{j_w}$
for some $j_w \in [0,k]$.
Put $W_{\max} = \{ w \in W \: : \: K_w \cong T^k \}$. Then $W_{\max}$ is a
smooth manifold which is the deepest stratum of $W$.
Note that $W_{\max}$ may be the empty set.

\begin{theorem} \label{theorem1}
Let $M$ be a compact connected manifold equipped with a Riemannian foliation
${\mathcal F}$. Suppose that Assumption \ref{assumption1} holds.
Let ${\mathcal E}$ be a holonomy-invariant normal Clifford module on $M$,
on which the Molino sheaf acts.
Let $D$ be the basic Dirac-type operator acting
on holonomy-invariant sections of ${\mathcal E}$. Then
\begin{equation} \label{1.1}
\Index(D) \: = \: \int_{W_{\max}} \widehat{A}(TW_{\max}) \:
{\mathcal N}_{{\mathcal E}, Q}.
\end{equation}
\end{theorem}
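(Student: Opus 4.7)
The plan is to rewrite $\Index(D)$ as a $T^k$-equivariant index on a smooth model and then localize to the deepest stratum via Atiyah--Segal--Singer. First, appealing to Molino's structure theory, I would pass to the transverse orthonormal frame bundle $\widehat{M}_1 \to M$, on which the lifted foliation $\widehat{\mathcal{F}}$ is transversely parallelizable; its leaf closures are the fibers of a smooth fibration $\widehat{M}_1 \to \widehat{W}_1$ onto a compact manifold, with $W = \widehat{W}_1/\mathrm{O}(q)$. Under Assumption \ref{assumption1}, global triviality of the abelian Molino sheaf furnishes $k$ commuting transverse Killing vector fields whose flows integrate to a smooth $T^k$-action on $\widehat{W}_1$ that commutes with the residual $\mathrm{O}(q)$-action. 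Assumption \ref{assumption1}.3 forces the $T^k$-stabilizer over $w \in W$ to be the subtorus of codimension $j_w$, so $W_{\max}$ is identified with the image in $W$ of the $T^k$-fixed locus $\widehat{W}_1^{T^k}$.

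Next I would lift the Clifford module $\mathcal{E}$ and the basic Dirac operator $D$ to an $\mathrm{O}(q) \times T^k$-equivariant Dirac-type operator $\widehat{D}$ on a Clifford module $\widehat{\mathcal{E}}$ over $\widehat{W}_1$, in such a way that holonomy-invariant sections of $\mathcal{E}$ pull back to $\mathrm{O}(q) \times T^k$-invariant sections of $\widehat{\mathcal{E}}$. Then $\Index(D)$ equals the trivial isotype of $\Index(\widehat{D}) \in R(\mathrm{O}(q) \times T^k)$, extracted by integrating the equivariant supertrace $\Str\bigl((g,t)\,e^{-s\widehat{D}^2}\bigr)$ against Haar measure. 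For Haar-almost every $t \in T^k$ the closure of $\langle t \rangle$ equals $T^k$, so $\widehat{W}_1^{t} = \widehat{W}_1^{T^k}$; applying Atiyah--Segal--Singer at such $t$ and then integrating over $T^k$ collapses the localization sum to contributions from components $F \subset \widehat{W}_1^{T^k}$,
\begin{equation}
\int_{T^k} \Str\bigl((g,t)\,e^{-s\widehat{D}^2}\bigr) \: dt \: = \: \sum_F \int_F \widehat{A}(TF) \left[ \int_{T^k} \frac{\ch_{(g,t)}(\widehat{\mathcal{E}}|_F)}{\det\bigl(1 - t^{-1} e^{R^{N_F}/2\pi i}\bigr)} \: dt \right],
\end{equation}
with the bracketed $T^k$-integral playing the role of a torus residue that encodes the normal data.

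A final averaging over $\mathrm{O}(q)$ would descend the integrand to $W_{\max}$, turn $\widehat{A}(TF)$ into $\widehat{A}(TW_{\max})$, and identify the bracketed integral with $\mathcal{N}_{\mathcal{E},Q}$, yielding \eqref{1.1}. The main technical obstacle will be precisely this last identification: one must carefully verify that the combination of transverse curvature, normal $T^k$-weights and $\mathrm{O}(q)$-representation data emerging from the bracketed integral really reorganizes into the intrinsic class $\mathcal{N}_{\mathcal{E},Q}$, with no stray contribution from strata whose generic isotropy is a proper subtorus of $T^k$. This is where the abelian Molino hypothesis is essential: the torus-residue calculus underlying the $T^k$-integration has no direct analogue for a general non-abelian Molino Lie algebra.
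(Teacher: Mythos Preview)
Your strategy has two genuine gaps, both of which the paper explicitly identifies as obstacles and then works around.

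\textbf{First, the global $T^k$-action and the lifted Dirac operator on $\widehat{W}_1$ do not exist in the form you need.} Under Assumption~\ref{assumption1} the Molino sheaf supplies $k$ commuting transverse Killing fields, but these are tangent to the leaf closures of the lifted foliation: on $F_{O(q)}{\mathcal T}$ the Lie algebroid $\pi^*\overline{\mathfrak g_{\mathcal T}}$ is precisely the vertical tangent bundle of the submersion $\widehat\sigma:F_{O(q)}{\mathcal T}\to\widehat W$, so the induced ${\mathfrak g}$-action on $\widehat W$ is trivial. There is no nontrivial $T^k$-action on $\widehat W_1$ whose fixed locus detects $W_{\max}$. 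More to the point, even in situations where one can present $W$ as a quotient of some manifold or orbifold $Y$ by a torus action, Remark~\ref{remark4} explains that in general $\dim Y>\dim{\mathcal T}$, so the basic Dirac operator does not lift to a Dirac-type operator on $Y$; your proposed $\widehat D$ on $\widehat W_1$ has no candidate Clifford module of the correct rank. This is exactly why the paper abandons the ``global group action'' route and instead works only with the Lie \emph{algebra} action, patching together local models $(B(V_\alpha)\times_{K_\alpha}G_\alpha)\rtimes G_{\alpha,\delta}$ via a parametrix (Proposition~\ref{proposition5}).

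\textbf{Second, even granting a global $T^k$-action, your bracketed torus integral diverges.} Section~\ref{subsection4.2} treats exactly the situation you describe (the suspension case, where a genuine $T^k$-action \emph{is} available) and shows that
\[
\int_{T^k}\frac{\ch_{(g,t)}(\widehat{\mathcal E}|_F)}{\det\bigl(1-t^{-1}e^{R^{N_F}/2\pi i}\bigr)}\,dt
\]
typically diverges on each component $F$; see equation~(\ref{4.9}). The finite value of $\Index(D)$ arises from cancellations \emph{between} components, so one cannot interchange the sum over $F$ with the $T^k$-integral, and there is no direct way to identify the bracket with the class ${\mathcal N}_{{\mathcal E},Q}$. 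The whole point of ${\mathcal N}_{{\mathcal E},Q}$ is that it is defined via a Lie-algebra average $AV_X$ after a complex shift $X\mapsto X-Q$ with $\Image(Q)$ off the weight hyperplanes (Definition~\ref{generaldef}); this shift is what renders each component's contribution separately finite. Producing that regularization requires the Kirillov-type delocalized formula (Corollary~\ref{corollary11}), the holomorphicity in $X\in\C^k$ (Proposition~\ref{proposition9}), and the contour-deformation argument of Corollary~\ref{corollary12}. None of these steps is visible in your outline, and without them the identification you flag as ``the main technical obstacle'' cannot be carried out.
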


Here ${\mathcal N}_{{\mathcal E}, Q}$
is a ``renormalized'' characteristic class which is
computed from the normal
data of $W_{\max}$ along with the restriction of ${\mathcal E}$ to
$W_{\max}$.  More precisely, it arises by multiplying
the Atiyah-Singer
normal characteristic class and an equivariant Chern class for
${\mathcal E} \big|_{W_{\max}}$, and performing an averaging process;
see Definition \ref{generaldef}.
Because of the computability of ${\mathcal N}_{{\mathcal E}, Q}$,
we can derive the following consequences.

\begin{corollary} \label{corollary1}
Under Assumption \ref{assumption1},\\
1. The basic Euler characteristic of $(M, {\mathcal F})$ equals
the Euler characteristic of $W_{\max}$. \\
2. If ${\mathcal F}$ is transversely oriented then the basic signature of
$(M, {\mathcal F})$ equals
the signature of $W_{\max}$. \\
3. Suppose that ${\mathcal F}$ has a transverse spin structure. Let $D$ be the
basic Dirac operator.  Then
$\Index(D) = \widehat{A} \left( W_{\max} \right)$
if $k = 0$, while
$\Index(D) = 0$ if $k > 0$.
\end{corollary}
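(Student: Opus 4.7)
The plan is to apply Theorem \ref{theorem1} to three specific transverse Dirac-type operators.  For part 1, take $D = d + d^{*}$ acting on basic forms with the even/odd grading, so that $\Index(D)$ is the basic Euler characteristic and $\mathcal{E} = \Lambda^{*}(N\mathcal{F})^{*} \otimes \C$.  For part 2, given the transverse orientation, let $D$ be the transverse signature operator, with $\mathcal{E}$ the signature Clifford module.  For part 3, let $D$ be the basic Dirac operator associated to the transverse spin structure.  In each case Theorem \ref{theorem1} reduces the problem to evaluating $\int_{W_{\max}} \hA(TW_{\max}) \cdot \mathcal{N}_{\mathcal{E},Q}$, and the remaining task is to identify it in classical terms.

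By Definition \ref{generaldef}, the renormalized class $\mathcal{N}_{\mathcal{E},Q}$ is an average over the Molino torus $T^{k}$ of an equivariant Chern character of $\mathcal{E}|_{W_{\max}}$ times the Atiyah--Singer normal characteristic class of the $T^{k}$-action on $N_{W_{\max}}$.  Since $W_{\max}$ is the locus on which the Molino isotropy is the full $T^{k}$, the torus acts trivially on $TW_{\max}$ and with no zero weights on $N_{W_{\max}}$.  For part 1, the classical identity $\hA(V) \, \ch(\Lambda^{*} V^{*}_{\C}) = e(V)$, applied to $V = N\mathcal{F}|_{W_{\max}}$ and averaged over $T^{k}$, collapses the integrand to the Euler density of $TW_{\max}$; Gauss--Bonnet--Chern then yields $\chi(W_{\max})$.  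Part 2 is structurally identical, replacing the Euler identity by $\hA(V) \, \ch(\mathrm{sign}(V)) = L(V)$ and invoking Hirzebruch's signature theorem to get $\sigma(W_{\max})$.

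For part 3 with $k = 0$, Assumption \ref{assumption1} forces $W = W_{\max}$ to be a manifold, the class $\mathcal{N}_{\mathcal{E},Q}$ reduces to $1$, and Theorem \ref{theorem1} gives $\hA(W_{\max})$ directly, in agreement with Kawasaki's orbifold formula.  For $k > 0$, pick a generic circle subgroup of $T^{k}$; by maximality of $W_{\max}$ it acts on the spinor bundle of $N_{W_{\max}}$ with only nonzero weights, and the Atiyah--Hirzebruch rigidity argument shows that the $T^{k}$-averaged equivariant $\hA$-integrand vanishes, hence $\Index(D) = 0$.

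The main obstacle is bookkeeping: one must translate Definition \ref{generaldef} into an explicit $T^{k}$-equivariant cohomological expression on $W_{\max}$ and verify that the classical multiplicative identities for $\hA \cdot \ch$ and the Atiyah--Hirzebruch vanishing theorem survive the torus averaging.  Once that translation is in hand, each of the three conclusions is a formal consequence of standard index theory on the smooth manifold $W_{\max}$.
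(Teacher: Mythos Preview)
Your overall strategy---apply Theorem~\ref{theorem1} to the Euler, signature, and Dirac operators and then identify the resulting integral over $W_{\max}$---matches the paper's. The gap is in how you evaluate $\mathcal{N}_{\mathcal{E},Q}$ in each case.

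The paper does not use the non-equivariant identities $\widehat{A}\cdot\ch(\Lambda^*)=e$, $\widehat{A}\cdot\ch(\mathrm{sign})=L$, nor Atiyah--Hirzebruch rigidity. Instead it exploits the parameter $Q\in\C^k$ built into Definition~\ref{generaldef} (more precisely, Corollary~\ref{corollary12}): since the index is independent of $Q$, one may send $\Image(Q)$ to infinity along a generic direction. The explicit normal factors
\[
\mathcal{F}_{\Dirac}(L,z)=\prod_j\bigl(z^{-1/2}e^{x_j/2}-z^{1/2}e^{-x_j/2}\bigr)^{-1},\qquad
\mathcal{F}_{\sign}(L,z)=\prod_j\frac{z^{-1/2}e^{x_j/2}+z^{1/2}e^{-x_j/2}}{z^{-1/2}e^{x_j/2}-z^{1/2}e^{-x_j/2}}
\]
then have transparent limiting behavior as $|z^{\pm1/2}|\to\infty$: the Dirac factor decays exponentially (giving $0$ when $k>0$), while the signature factor tends to $\pm1$ (leaving exactly $\int_{W_{\max}}L(TW_{\max})$). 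For the Euler operator the normal contribution is already absent and one gets $\int_{W_{\max}} e(TW_{\max})$ directly.

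Your proposed arguments for parts~2 and~3 do not close without something like this. The identity $\widehat{A}(V)\,\ch(\mathrm{sign}(V))=L(V)$ is non-equivariant and says nothing about why the $AV_X$ of the genuinely $X$-dependent normal factor $\Phi(e^{-X+Q})$ equals $1$; that is precisely what the $Q\to\infty$ trick supplies. Similarly, the Atiyah--Hirzebruch rigidity argument needs a global closed spin manifold carrying an honest $S^1$-action, but in this foliated setting no such manifold exists---one only has the local torus actions on the patches $Y_\alpha$, and what survives globally is the fixed-point expression of Corollary~\ref{corollary12}. So the vanishing for $k>0$ must be read off from that expression, which is again the $Q\to\infty$ limit. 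Your Euler argument (part~1) is essentially fine and agrees with the paper.
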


The proof of Theorem \ref{theorem1} requires some new techniques. To motivate these,
we start with a special case.
An especially tractable example of a Riemannian foliation comes from a suspension
construction, as described in Examples \ref{example1}-\ref{example8} and Section \ref{section4}.
In this case, the transverse structure can be described by the following data
:
\begin{enumerate}
\item A discrete finitely presented group $\Gamma$,
\item A compact Lie group $G$,
\item An injection $i \: : \: \Gamma \rightarrow G$ with dense image, and
\item A closed Riemannian manifold $Z$ on which $G$ acts isometrically.
\end{enumerate}

With this data, a transverse Dirac-type operator on the suspension foliation amounts to
a Dirac-type operator on $Z$ which is $\Gamma$-invariant or, equivalently,
$G$-invariant. In this case, the index problem amounts to computing the index of $D$, the
restriction of the Dirac-type operator to the $G$-invariant sections of the Clifford module.
Such an index can easily be computed as $\Index(D) = \int_G \Index(g) \: d\mu_G(g)$,
where $\Index(g) \in \R$ is the $G$-index and $d\mu_G$ is Haar measure on $G$.

The Atiyah-Singer $G$-index theorem \cite{Atiyah-Singer (1968)} tells us that
$\Index(g) \: = \: \int_{Z^g} {\mathcal L}(g)$, where $Z^g$ is the fixed-point set of $g \in G$ and
${\mathcal L}(g) \in \Omega^*(Z^g)$ is an explicit characteristic class.  Suppose that $G$ is a
torus group $T^k$.
After performing
the integral over $g \in T^k$, only the submanifolds with $Z^g = Z^{T^k}$ will contribute,
where $Z^{T^k}$ denotes the fixed-point set of $T^k$. Hence
we can write
\begin{equation} \label{1.2}
\Index(D) = \int_{T^k} \int_{Z^{T^k}} {\mathcal L}(g) \: d\mu_{T^k}(g).
\end{equation}

In order to give a local expression for $\Index(D)$, we would like to exchange integrals
and write
\begin{equation} \label{1.3}
\Index(D) \stackrel{?}{=} \int_{Z^{T^k}} \int_{T^k} {\mathcal L}(g) \: d\mu_{T^k}(g).
\end{equation}
But there is a surprise : the integral $\int_{T^k} {\mathcal L}(g) \: d\mu_{T^k}(g) \in
\Omega^*(Z^{T^k})$ generally diverges! The reason that (\ref{1.2}) makes sense is that there are
cancellations of singularities arising from the various connected components of $Z^{T^k}$.
After identifying these singularities (which will cancel in the end) one can subtract them by hand
and thereby
obtain a valid ``renormalized'' local index formula
\begin{equation} \label{1.4}
\Index(D) = \int_{Z^{T^k}} \widehat{A} \left( TZ^{T^k} \right) \:
{\mathcal N}.
\end{equation}

In general, the transverse structure of a Riemannian foliation does not admit a global
Lie group action like in the suspension case.  This is a problem for seeing the cancellation of singularities.
Instead, if the Molino sheaf has trivial holonomy then there is a
global {\em Lie algebra} action, by ${\mathfrak g}$. Because of this, we
use the Kirillov delocalized approach to equivariant index theory
\cite[Chapter 8]{Berline-Getzler-Vergne (1992)}. If ${\mathfrak g}$ is abelian then we can
replace the nonexistent ``integration over $G$'' by an averaging over ${\mathfrak g}$. In summary,
our proof of Theorem \ref{theorem1} combines a parametrix construction, using local models
for the transverse structure, with Kirillov-type equivariant index
formulas and an averaging over ${\mathfrak g}$.

It is not clear to us whether our methods extend beyond the restrictions in
Assumption \ref{assumption1}. If we remove Assumption \ref{assumption1}.3 then the analog of $W_{\max}$ is an
orbifold and the right-hand side of (\ref{1.1}) makes sense.  However, in this case it is not
clear  whether our proof extends if $k > 0$.

In this paper we focus on the transverse structure of the foliation, as
opposed to the leafwise structure.  More precisely, we choose a complete transversal
$Z$ for the foliation and work with the \'etale groupoid ${\mathcal G}_{\mathcal T}$
whose unit space is $Z$, as
opposed to the foliation groupoid whose unit space is $M$.  Let us mention an attractive alternative
approach to the transverse index theorem. It consists of passing to the normal frame bundle
$F_{O(q)} M$ of $M$, where one obtains an $O(q)$-transversally elliptic differential
operator. Atiyah showed that such an operator has an index which is a distribution on
$O(q)$ \cite{Atiyah (1974)}. The numerical index $\Index(D)$ is the result of pairing this distribution
with the identity function. There is an index formula for $G$-transversally elliptic operators,
due to Berline and Vergne
\cite{Berline-Vergne (1996),Paradan-Vergne (2008)}.
Unfortunately, this index formula is not explicit enough to yield
a local formula for $\Index(D)$. Consequently, we stick to the
Riemannian groupoid ${\mathcal G}_{\mathcal T}$, although we do use frame bundles for some
technical points.

Let us also mention that there is a transverse index theorem developed by
Br\"uning-Kamber-Richardson
\cite{Bruening-Kamber-Richardson1 (2010),Bruening-Kamber-Richardson2 (2010),Bruening-Kamber-Richardson3 (2010)}, based on doing analysis
on the singular space $W$. In this way they obtain an index formula
involving integrals over desingularizations of strata along with
eta-invariants of normal spheres.

The structure of this paper is as follows.  In Section \ref{section2}
we review material about Riemannian foliations and Riemannian
groupoids.  We discuss the groupoid closure and construct a
Haar system for it.  In Section \ref{section3} we describe basic
Dirac-type operators in the setting of spectral
triples.  We prove an isomorphism between the image of a
certain projection operator, acting on all smooth
sections of the transverse Clifford module, and
the space of
holonomy-invariant smooth sections of the transverse Clifford module.
We use this to define the invariant Dirac-type operator as a
self-adjoint operator.
In Section \ref{section4}, which can be read independently of the
rest of the paper, we consider the special case of a
Riemannian foliation which arises as the suspension of a
group of isometries of a compact manifold.
In Section \ref{section5} we specialize to the case of
abelian Molino sheaf. We construct a parametrix and prove
a delocalized index theorem.
In Section \ref{section6} we localize this result and
prove Theorem \ref{theorem1}. We also compute the
indices in some geometric examples.

More detailed descriptions can be found at the beginnings of
the sections.

We thank the referee for useful comments.

\section{Riemannian groupoids and their closures} \label{section2}

In this section we collect material, some of it well known and
some of it not so well known, about Riemannian foliations
and Riemannian groupoids.
For basic information about foliations and groupoids, we refer to
\cite{Moerdijk-Mrcun (2003)}. A survey on Riemannian foliations
is in \cite{Haefliger (1989)}.

In Subsection \ref{subsection2.1} we introduce some notation and basic
ideas about Riemannian groupoids.

It will be important for
us to be able to take the closure of a Riemannian groupoid,
in an appropriate sense.  This is because the closure is a
proper Lie groupoid, which allows for averaging.  Hence in
Subsection \ref{subsection2.2} we recall the construction of the groupoid
closure.  In order to do averaging, we need a Haar system on
the groupoid closure.  Our construction of the Haar system
is based on passing to the frame bundle of a transversal, which is
described in Subsection \ref{subsection2.3}.  Subsection \ref{subsection2.4} contains
the construction of the Haar system, along with certain
mean curvature one-forms.

In Subsection \ref{subsection2.5} we summarize
Molino theory in terms of the Lie algebroid of the groupoid closure.
Subsection \ref{subsection2.6} recalls Haefliger's local models for the
transverse structure of a Riemannian foliation.  Finally,
in Subsection \ref{subsection2.7}, we recall Sergiescu's dualizing sheaf
for a Riemannian groupoid and show how a square root of the
dualizing sheaf allows one to define a basic signature.

\subsection{Riemannian groupoids} \label{subsection2.1}
Suppose that ${\mathcal G}$ is a smooth effective \'etale groupoid
\cite[Chapter 5.5]{Moerdijk-Mrcun (2003)}. The space of units is denoted
${\mathcal G}^{(0)}$.
We will denote the source and range maps of ${\mathcal G}$ by $s$ and $r$, respectively.
Our conventions are that
$g_1 g_2$ is defined if and only if $s(g_1) = r(g_2)$.
We write ${\mathcal G}^p$ for $r^{-1}(p)$, ${\mathcal G}_p$ for $s^{-1}(p)$ and
${\mathcal G}^p_p$ for the isotropy group
$s^{-1}(p) \cap r^{-1}(p)$. For simplicity of notation, we 
write $g \in {\mathcal G}$ instead of $g \in {\mathcal G}^{(1)}$ when 
referring
to an element of the groupoid.
We write
$dg_{s(g)} : T_{s(g)}{\mathcal G}^{(0)} \rightarrow  T_{r(g)}{\mathcal G}^{(0)}$
for the linearization of $g$.

For us, an action of ${\mathcal G}$ on
a manifold $Z$ is a right action. That is, one first has
a submersion $\pi : Z \rightarrow {\mathcal G}^{(0)}$. Putting
\begin{equation} \label{2.1}
Z \times_{{\mathcal G}^{(0)}} {\mathcal G} = \{(p,g) \in Z \times {\mathcal G} \: : \:
\pi(p) = r(g)\},
\end{equation}
we must also have a smooth map $Z \times_{{\mathcal G}^{(0)}} {\mathcal G} \rightarrow Z$,
denoted $(p, g) \rightarrow pg$, such that $\pi(pg) = s(g)$ and $(pg_1)g_2 = p(g_1 g_2)$ for
all composable $g_1, g_2$. There is an associated cross-product groupoid
$Z \rtimes {\mathcal G}$ with $s(p,g) = pg$ and $r(p,g) = p$.

Our notion of
equivalence for smooth effective \'etale groupoids is {\em weak equivalence}
\cite[Chapter 5.4]{Moerdijk-Mrcun (2003)}, which is sometimes
called Morita equivalence.  (This is distinct from groupoid
isomorphism.) A useful way to characterize weak
equivalence (for \'etale groupoids) is the following
\cite[Exercise III.${\mathcal G}$.2.8(2)]{Bridson-Haefliger (1999)} :
two smooth \'etale groupoids ${\mathcal G}$ and ${\mathcal G}^\prime$
are weakly equivalent if there are open covers ${\mathcal U}$ and
${\mathcal U}^\prime$ of their unit spaces so that the
localizations ${\mathcal G}_{\mathcal U}$ and
${\mathcal G}^\prime_{{\mathcal U}^\prime}$ are isomorphic smooth groupoids.

A smooth \'etale groupoid ${\mathcal G}$ is {\em Riemannian}
if there is a Riemannian metric
on ${\mathcal G}^{(0)}$ so that the groupoid elements act by
local isometries. That is, for each $g \in {\mathcal G}$,
the map $dg_{s(g)}$ is an isometric isomorphism.
There is an evident notion of
isomorphism for Riemannian groupoids. Two Riemannian groupoids are
{\em equivalent} if there are localizations ${\mathcal G}_{\mathcal U}$ and
${\mathcal G}^\prime_{{\mathcal U}^\prime}$ which are isomorphic
Riemannian groupoids.

A Riemannian groupoid is {\em complete}
in the sense of \cite[Definition 3.1.1]{Haefliger (1985)} if for all $p_1, p_2 \in
{\mathcal G}^{(0)}$, there
are neighborhoods $U_1$ of $p_1$ and $U_2$ of $p_2$
so that any groupoid element $g$ with $s(g) \in U_1$ and $r(g) \in U_2$ has an extension
to all of $U_1$. That is, for any such $g$,
there is a smooth map $\tau : U_1 \rightarrow {\mathcal G}$ 
with $\tau(s(g)) = g$
and $s \circ \tau \: = \: \Id$.

\subsection{Groupoid closures} \label{subsection2.2}

Let $M$ be a connected closed $n$-dimensional manifold with a
codimension-$q$ foliation ${\mathcal F}$.
A {\em Riemannian foliation} structure on ${\mathcal F}$ is
an inner product on the normal bundle $TM/T{\mathcal F}$ which
is holonomy-invariant. See \cite[Chapter 2.2]{Moerdijk-Mrcun (2003)}
for some equivalent formulations.  In what follows, we assume that
${\mathcal F}$ has a fixed Riemannian foliation structure.

There is a partition of $M$ by the leaf closures.
The quotient space $W$ is Hausdorff but is generally not a manifold.

Let $F_{O(q)}M$ denote the orthonormal normal frame bundle to
${\mathcal F}$ \cite[Chapter 4.2.2]{Moerdijk-Mrcun (2003)}.
It has a lifted codimension-$q$ foliation
$\widehat{\mathcal F}$. The leaf closures of
$\widehat{\mathcal F}$ form the fibers of a smooth fiber bundle
$F_{O(q)}M \rightarrow \widehat{W}$, which is
$O(q)$-equivariant
\cite[Theorem 4.26(ii)]{Moerdijk-Mrcun (2003)}. Also, $W = \widehat{W}/O(q)$.
Let $\iota \: : \: \widehat{W} \rightarrow W$ denote the quotient map.

Let ${\mathcal T}$ be a {\em complete transversal} to ${\mathcal F}$
\cite[Example 5.19]{Moerdijk-Mrcun (2003)}.
Because $M$ is compact,
we can assume that
${\mathcal T}$ has a
finite number of connected components, each being the
interior of a smooth manifold-with-boundary.
Let ${\mathcal G}_{\mathcal T}$ be the corresponding
{\em \'etale holonomy groupoid} \cite[Example 5.19]{Moerdijk-Mrcun (2003)}.
Its space of units is ${\mathcal T}$.
Then ${\mathcal G}_{\mathcal T}$ is a complete Riemannian groupoid.
Its weak equivalence class is independent of the choice of
complete transversal ${\mathcal T}$.

We write $d\mu_{\mathcal T}$ for the Riemannian density measure on ${\mathcal T}$.

\begin{example} \label{example1}
Let $\Gamma$ be a finitely presented discrete group. Let $G$ be a compact
Lie group which acts isometrically and effectively on a connected compact Riemannian manifold $Z$.
Suppose that $i : \Gamma \rightarrow G$ is an injective homomorphism. Suppose that $Y$ is a
connected compact manifold with $\pi_1(Y, y_0) = \Gamma$. Let $c : \widetilde{Y}
\rightarrow Y$ be the universal cover.
Then $M = (\widetilde{Y} \times Z)/\Gamma$ has a Riemannian foliation whose leaves are
the images in $M$ of $\{\widetilde{Y} \times \{z\} \}_{z \in Z}$.
It is an example of a suspension foliation.
There is a complete
transversal $(c^{-1}(y_0) \times Z)/\Gamma \cong Z$. Then
${\mathcal G}_{\mathcal T}$ is the cross-product groupoid $Z \rtimes \Gamma$.
\end{example}

We will want to take the closure of ${\mathcal G}_{\mathcal T}$ in a certain sense,
following \cite{Haefliger (1985),Salem (1988a),Salem (1988b)}.
To do so, let
\begin{equation} \label{2.2}
J^1(\mathcal{T})=\left\{(p_1, p_2, A)\ |\ p_1, p_2 \in \mathcal{T}, A \in \Isom (T_{p_1} \mathcal{T}, T_{p_2}\mathcal{T}) \right\}
\end{equation}
be the groupoid of isometric $1$-jet elements, with the $1$-jet topology. It is a Lie groupoid in the
sense of \cite[Chapter 5.1]{Moerdijk-Mrcun (2003)}, but is not an \'etale groupoid unless $\dim({\mathcal T}) = 0$.
\begin{lemma} \label{lemma1}
$J^1(\mathcal{T})$ is a proper Lie groupoid in the sense of
\cite[Chapter 5.6]{Moerdijk-Mrcun (2003)}
\end{lemma}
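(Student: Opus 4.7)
The plan is to check the defining property: a Lie groupoid is proper when the map $(s,r) \: : \: J^1(\mathcal{T}) \to \mathcal{T} \times \mathcal{T}$ sends each compact set back to a compact set. Unwinding the description in (\ref{2.2}), an element $(p_1, p_2, A)$ has $s = p_1$, $r = p_2$, so the fiber of $(s,r)$ over a point $(p_1,p_2)$ is $\Isom(T_{p_1}\mathcal{T}, T_{p_2}\mathcal{T})$, i.e.\ a torsor for the compact group $O(q)$.

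Next, I would make this fiber structure global by using the orthonormal frame bundle $F_{O(q)} \mathcal{T} \to \mathcal{T}$. Viewing a frame $f$ at $p$ as a linear isometry $\R^q \to T_p \mathcal{T}$, a pair of frames $(f_1, f_2)$ over $(p_1,p_2)$ determines the isometry $A = f_2 \circ f_1^{-1}$, and two pairs give the same $A$ iff they differ by the diagonal right action of $O(q)$. This gives a smooth identification
\begin{equation}
J^1(\mathcal{T}) \: \cong \: \bigl( F_{O(q)}\mathcal{T} \times F_{O(q)}\mathcal{T} \bigr) \big/ O(q),
\end{equation}
under which $(s,r)$ becomes the natural projection to $\mathcal{T} \times \mathcal{T}$. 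In particular $(s,r)$ is a locally trivial fiber bundle with compact structure group $O(q)$ and compact fiber $O(q)$.

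Properness is then immediate: if $K \subset \mathcal{T} \times \mathcal{T}$ is compact, then $(s,r)^{-1}(K)$ is the restriction of this compact-fiber bundle to $K$, hence compact. Therefore $(s,r)$ is a proper map and $J^1(\mathcal{T})$ is a proper Lie groupoid. The smoothness of the groupoid structure (composition, inversion, the unit embedding) follows from the standard smooth structure on the $1$-jet bundle and is used implicitly.

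I do not expect a serious obstacle here; the only thing to be careful about is that $\mathcal{T}$, having a finite number of connected components each the interior of a compact manifold-with-boundary, is itself locally compact and Hausdorff, so the standard criterion ``proper $=$ preimage of compact is compact'' applies. Once the frame-bundle picture is in place, the argument reduces to the trivial observation that a fiber bundle with compact fiber over a compact base is compact.
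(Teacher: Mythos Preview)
Your proof is correct and follows essentially the same approach as the paper: both show that $(s,r)$ is a fiber bundle with compact fiber $O(q)$ and conclude properness from this. Your frame-bundle description $J^1(\mathcal{T}) \cong (F_{O(q)}\mathcal{T} \times F_{O(q)}\mathcal{T})/O(q)$ is a more explicit justification of the fiber-bundle structure than the paper gives, but the underlying idea is identical.
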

\begin{proof}
The map $(s,r) \: : \: J^1(\mathcal{T})\to \mathcal{T} \times  \mathcal{T}$ sends
$(p_1, p_2, A)$ to $(p_1, p_2)$. It defines a fiber bundle with fibers diffeomorphic to the compact Lie group
$O(q)$. Hence it is a proper map.
\end{proof}

There is a homomorphism of ${\mathcal G}_{\mathcal T}$ into $J^1(\mathcal{T})$ that
sends $g \in {\mathcal G}_{\mathcal T}$ to $(s(g), r(g), dg_{s(g)}) \in J^1(\mathcal{T})$.
This homomorphism is injective, as follows from the fact that ${\mathcal G}_{\mathcal T}$
is effective, along with the fact that if $I$ is an isometry of a Riemannian manifold such
that $I(p) = p$ and $dI_p = \Id$ then $I$ is the identity in a neighborhood of $p$.

Let $\overline{{\mathcal G}_{\mathcal T}}$ be the
closure of ${\mathcal G}_{\mathcal T}$ in $J^1(\mathcal{T})$.
It is a subgroupoid of $J^1(\mathcal{T})$, again with
unit space ${\mathcal{T}}$. (Note that ${\mathcal{T}}$ is a smooth
manifold in its own right. The fact that it is the interior of
a compact manifold-with-boundary will not enter until Subsection
\ref{subsection3.3}.)
Now $\overline{{\mathcal G}_{\mathcal T}}$ is a
smooth subgroupoid of
$J^1(\mathcal{T})$ and so inherits a Lie groupoid structure;
see \cite[Section 2]{Salem (1988b)} and
(\ref{2.5}) below.
Note that $dg_{s(g)} \: : \: T_{s(g)} \mathcal{T} \rightarrow T_{r(g)} \mathcal{T}$ can be defined for all
$g \in \overline{{\mathcal G}_{\mathcal T}}$.

\begin{lemma} \label{lemma2}
$\overline{{\mathcal G}_{\mathcal T}}$
is a proper Lie groupoid.
\end{lemma}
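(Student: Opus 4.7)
The plan is to reduce this directly to Lemma \ref{lemma1}. Properness of a Lie groupoid $\mathcal{H}$ means that the anchor map $(s,r) : \mathcal{H} \to \mathcal{H}^{(0)} \times \mathcal{H}^{(0)}$ is a proper map. The Lie groupoid structure on $\overline{{\mathcal G}_{\mathcal T}}$ has already been asserted via the reference to Salem (so I treat that as given), and its unit space is again ${\mathcal T}$.

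First I would observe that $\overline{{\mathcal G}_{\mathcal T}}$ is, by construction, a closed subset of $J^1(\mathcal{T})$. Then the anchor map for $\overline{{\mathcal G}_{\mathcal T}}$ is nothing but the restriction
\[
(s,r)\big|_{\overline{{\mathcal G}_{\mathcal T}}} \: : \: \overline{{\mathcal G}_{\mathcal T}} \longrightarrow \mathcal{T} \times \mathcal{T}
\]
of the anchor map of $J^1(\mathcal{T})$. By Lemma \ref{lemma1}, the latter is proper. The restriction of a proper map to a closed subset of its domain is proper; equivalently, if $K \subset \mathcal{T} \times \mathcal{T}$ is compact, then $(s,r)^{-1}(K) \cap \overline{{\mathcal G}_{\mathcal T}}$ is a closed subset of the compact set $(s,r)^{-1}(K) \subset J^1(\mathcal{T})$, hence compact. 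This gives properness of $\overline{{\mathcal G}_{\mathcal T}}$.

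Since the argument is essentially a one-line deduction from Lemma \ref{lemma1} plus closedness, there is no real obstacle. The only subtle point worth flagging is that one must know $\overline{{\mathcal G}_{\mathcal T}}$ genuinely carries a smooth Lie groupoid structure (not just a topological one) so that ``proper Lie groupoid'' in the sense of \cite[Chapter 5.6]{Moerdijk-Mrcun (2003)} applies; this is exactly the content that has been borrowed from \cite{Salem (1988b)} in the paragraph preceding the lemma, so I would simply cite it rather than reprove it.
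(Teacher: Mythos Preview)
Your argument is correct and is essentially identical to the paper's own proof, which simply invokes Lemma~\ref{lemma1} together with the fact that $\overline{{\mathcal G}_{\mathcal T}}$ is a closed subset of $J^1(\mathcal{T})$. Your additional remark about the Lie groupoid structure being supplied by \cite{Salem (1988b)} is also exactly how the paper handles that point.
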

\begin{proof}
This follows from Lemma \ref{lemma1}, along with the fact that $\overline{{\mathcal G}_{\mathcal T}}$ is a
closed subset of $I(\mathcal{T})$.
\end{proof}

The orbit
space of $\overline{{\mathcal G}_{\mathcal T}}$ is $W$, the space of leaf
closures. Let $\sigma \: : \: {\mathcal T} \rightarrow W$ denote the quotient map.

\begin{example} \label{example2}
Continuing with Example \ref{example1}, suppose that the homomorphism $i : \Gamma \rightarrow G$ has dense image.
Then $\overline{{\mathcal G}_{\mathcal T}}$ is the cross-product groupoid $Z \rtimes G$.
\end{example}

In addition to its subspace topology from $J^1({\mathcal T})$, the groupoid
$\overline{{\mathcal G}_{\mathcal T}}$ has an \'etale topology, for which
$s$ and $r$ are local homeomorphisms.  In particular, each
$g \in \overline{{\mathcal G}_{\mathcal T}}$ has a
local extension to an isometry between neighborhoods of $s(g)$ and $r(g)$;
this follows from the fact that $g$ is a limit of elements of
${\mathcal G}_{\mathcal T}$ that have this property in a uniform way.
We will call this the {\em extendability} property of $g$. The
local extension of $g$ is given explicitly by $\exp_{r(g)} \circ dg_{s(g)} \circ
\exp_{s(g)}^{-1}$.
In what follows, when we refer to $\overline{{\mathcal G}_{\mathcal T}}$
we will give it the subspace topology, unless we say otherwise.

\begin{example} \label{example3}
Continuing with Example \ref{example2}, when we convert from the (proper) Lie groupoid topology on
$\overline{{\mathcal G}_{\mathcal T}}$ to the \'etale topology, the result is $Z \rtimes G_\delta$,
where $G_\delta$ denotes the discrete topology on $G$.
\end{example}

\subsection{Normal frame bundle} \label{subsection2.3}

Let $\pi : F_{O(q)}{\mathcal T} \rightarrow {\mathcal T}$ be the orthonormal frame bundle of ${\mathcal T}$.
Then $\overline{{\mathcal G}_{{\mathcal T}}}$ acts on $F_{O(q)}{\mathcal T}$
by saying that if $g \in \overline{{\mathcal G}_{{\mathcal T}}}$
and $f$ is an orthonormal frame at $r(g)$
then $f \cdot g$ is the frame $(dg_{s(g)})^{-1}(f)$ at $s(g)$.

Let $\widehat{{\mathcal G}_{{\mathcal T}}}$ be the cross-product groupoid
\begin{equation}
F_{O(q)}{\mathcal T} \rtimes \overline{{\mathcal G}_{{\mathcal T}}} \: = \:
\{ (f,g) \: : \: g \in \overline{{\mathcal G}_{{\mathcal T}}}, \: f \text{ an orthonormal
frame at } r(g) \}.
\end{equation}
It has unit space $F_{O(q)}{\mathcal T}$, with
$s(f,g) = f \cdot g$ and $r(f,g) = f$, and orbit space $\widehat{W}$, with the
quotient map ${\widehat{\sigma}} \: : \: F_{O(q)}{\mathcal T} \rightarrow {\widehat{W}}$ being a smooth
submersion
\cite[Theorem 4.2]{Salem (1988a)}.
With abuse of terminology, we may call the subsets
${\widehat{\sigma}}^{-1}(\widehat{w})$
fibers.
There is a commutative diagram
\begin{alignat}{3} \label{2.3}
F_{O(q)}&{\mathcal T}     & &\stackrel{\widehat{\sigma}}{\longrightarrow}     & & \widehat{W} \\
\pi &\downarrow           & &                                             & &\downarrow \iota \notag \\
&{\mathcal T}             & &\stackrel{{\sigma}}{\longrightarrow}             & & W, \notag
\end{alignat}
where $\widehat{\sigma}$ is $O(q)$-equivariant, and $\pi$ and $\iota$ are the results of taking
$O(q)$-quotients.

The groupoid $\widehat{{\mathcal G}_{{\mathcal T}}}$ can be considered as a lift of
$\overline{{\mathcal G}_{{\mathcal T}}}$ to $F_{O(q)}{\mathcal T}$. It has trivial isotropy groups
and comes from the equivalence relation on $F_{O(q)}{\mathcal T}$ given by saying that
$f \sim f^\prime$ if and only if ${\widehat{\sigma}}(f) = {\widehat{\sigma}}(f^\prime)$.
There is an $O(q)$-equivariant isomorphism
\begin{equation} \label{2.4}
\widehat{{\mathcal G}_{ {\mathcal T}}} =
\left( F_{O(q)}{\mathcal T} \times_{\widehat{W}} F_{O(q)}{\mathcal T}\right).
\end{equation}
Hence
\begin{equation} \label{2.5}
\overline{{\mathcal G}_{ {\mathcal T}}} =
\left( F_{O(q)}{\mathcal T} \times_{\widehat{W}} F_{O(q)}{\mathcal T}\right)/O(q)
\end{equation}
as Lie groupoids.

The groupoid $\widehat{{\mathcal G}_{ {\mathcal T}}}$ also has an \'etale structure, coming from
that of 
$\overline{{\mathcal G}_{ {\mathcal T}}}$.
To see this in terms of local diffeomorphisms,
given $\widehat{g} \in \widehat{{\mathcal G}_{ {\mathcal T}}}$, write it as a pair
$(f,g)$ with $g \in \overline{{\mathcal G}_{{\mathcal T}}}$ and $f$ an orthonormal frame at $r(g)$.
Let $L \: : \: U \rightarrow V$ be an extension of $g$ to an isometry, where $U$ is a neighborhood of
$s(g) \in {\mathcal T}$ and $V$ is a neighborhood of $r(g) \in {\mathcal T}$.
Then the lift $\widehat{L} \: : \: F_{O(q)} U \rightarrow F_{O(q)} V$ is a diffeomorphism from a
neighborhood of $s(\widehat{g}) \in F_{O(q)}{\mathcal T}$ to a neighborhood of
$t(\widehat{g}) \in F_{O(q)}{\mathcal T}$.

In particular,
\begin{equation} \label{2.6}
d\widehat{g}_{s(\widehat{g})} \: : \:
T_{s(\widehat{g})}F_{O(q)}{\mathcal T}
\rightarrow T_{r(\widehat{g})}F_{O(q)}{\mathcal T}
\end{equation}\
is well-defined.

There is a transverse Levi-Civita connection on $F_{O(q)}{\mathcal T}$, by means of which
one can construct a canonical {\em parallelism} of $F_{O(q)}{\mathcal T}$, i.e. vector fields $\{V^i\}$
that are pointwise linearly independent
and span $TF_{O(q)}{\mathcal T}$
\cite[Chapter 4.2.2]{Moerdijk-Mrcun (2003)}. This parallel structure is $\widehat{{\mathcal G}_{ {\mathcal T}}}$-invariant
in the sense that for all $\widehat{g} \in \widehat{{\mathcal G}_{ {\mathcal T}}}$,
$d\widehat{g}_{s(\widehat{g})} (V^i_{s(\widehat{g})}) = V^i_{r(\widehat{g})}$.

There is also a canonical
Riemannian metric on $F_{O(q)}{\mathcal T}$, which comes from saying that the vector fields
$\{V^i\}$ are pointwise orthonormal.  With respect to this Riemannian metric, the vertical $O(q)$-directions
are orthogonal to the horizontal directions (coming from the transverse Levi-Civita connection),
the $O(q)$-fibers are all isometric to the standard $O(q)$ with the bi-invariant Riemannian metric
of total volume one,
and the horizontal planes are isometric to their projections to ${\mathcal T}$.

With this Riemannian metric on $F_{O(q)}{\mathcal T}$, the submersion
${\widehat{\sigma}} \: : \: F_{O(q)} {\mathcal T} \rightarrow \widehat{W}$
becomes a Riemannian submersion.

Finally, we note that if ${\mathcal F}$ is transversely oriented then the above statements have
analogs in which $O(q)$ is replaced by $SO(q)$. Similarly, if ${\mathcal F}$ has a transverse spin
structure then the statements have analogs in which $O(q)$ is replaced by $\Spin(q)$.

\subsection{Haar system} \label{subsection2.4}

For $f \in F_{O(q)}{\mathcal T}$, let $d\mu^f$ be the measure on
$\widehat{{\mathcal G}_{\mathcal T}}$ which is supported on
$\widehat{{\mathcal G}_{\mathcal T}}^f \cong {\widehat{\sigma}}^{-1}({\widehat{\sigma}}(f))$
and is given there by the
fiberwise Riemannian density.

To define the mean curvature form
$\widehat{\tau} \in \Omega^1( F_{O(q)} {\mathcal T})$ of the
fibers, choose $f \in F_{O(q)}{\mathcal T}$. Given a vector
$\widehat{X}_f \in T_fF_{O(q)}{\mathcal T}$, extend it to a vector field
$\widehat{X}$ on 
$\widehat{{\mathcal G}_{\mathcal T}}^f \cong {\widehat{\sigma}}^{-1}({\widehat{\sigma}}(f))$,
the 
$\widehat{{\mathcal G}_{{\mathcal T}}}$-orbit
of $f$,
so that for all 
$\widehat{g} \in
\widehat{{\mathcal G}_{{\mathcal T}}}^f$
we have
$d\widehat{g}_{s(\widehat{g})}(\widehat{X}_{s(\widehat{g})}) = \widehat{X}_f$. We can
find $\epsilon > 0$ and a small neighborhood $U$ of $f$ in ${\widehat{\sigma}}^{-1}({\widehat{\sigma}}(f))$ so that
the geodesic flow $\phi_t(f^\prime) = \exp_{f^\prime}(t\widehat{X}_{f^\prime})$ is defined for
all $t \in (-\epsilon,\epsilon)$
and $f^\prime \in U$, and $\phi_t$ maps $U$ diffeomorphically to its image
in a fiber ${\widehat{\sigma}}^{-1}(\gamma(t))$. Here $\gamma$ is the geodesic on $\widehat{W}$
starting from ${\widehat{\sigma}}(f)$, with initial vector $d{\widehat{\sigma}}_f(\widehat{X}_f)$.
Define the Lie derivative
\begin{equation} \label{2.7}
({\mathcal L}_{\widehat{X}} d\mu)^f \: = \:
\frac{d}{dt} \Big|_{t=0} \phi_t^* d\mu^{\phi_t(f)}
\end{equation}
Then
\begin{equation} \label{2.8}
\widehat{\tau}(\widehat{X}_f) = \frac{
({\mathcal L}_{\widehat{X}} d\mu)^f
}{
d\mu^f
} \Big|_f.
\end{equation}

\begin{lemma} \label{lemma3}
$\widehat{\tau}$ is a closed $1$-form which is
$\widehat{{\mathcal G}_{\mathcal T}}$-basic and $O(q)$-basic.
\end{lemma}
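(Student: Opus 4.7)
The plan is to exhibit $\widehat{\tau}$ as the differential of a single smooth function on $F_{O(q)} \mathcal{T}$ that is simultaneously $\widehat{\mathcal{G}_{\mathcal{T}}}$-invariant and $O(q)$-invariant; closedness together with both basic-ness properties will then follow at once. Specifically, since $\widehat{\sigma} : F_{O(q)} \mathcal{T} \to \widehat{W}$ is a smooth submersion with compact fibers, the fiber volume function $V : \widehat{W} \to \R_{>0}$, $V(\widehat{w}) = \int d\mu^{f}$ for any $f \in \widehat{\sigma}^{-1}(\widehat{w})$, is smooth and positive. Setting $\widehat{F} = \widehat{\sigma}^{*} \log V$, the core of the argument is to prove $\widehat{\tau} = d \widehat{F}$.

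To establish this identity, I would fix $\widehat{X}_f$, extend it groupoid-invariantly to $\widehat{X}$ on the fiber $F := \widehat{\sigma}^{-1}(\widehat{\sigma}(f))$, and exploit equivariance. Since each local extension $L_{\widehat{g}}$ of a groupoid element $\widehat{g}$ is a local isometry, and isometries carry geodesics to geodesics, the invariance $(dL_{\widehat{g}}) \widehat{X}_{s(\widehat{g})} = \widehat{X}_f$ gives
\[
\phi_t(s(\widehat{g})) \: = \: L_{\widehat{g}}^{-1} \bigl( \phi_t(f) \bigr)
\]
for all $\widehat{g}$ with $r(\widehat{g}) = f$. Thus $\phi_t$ intertwines the groupoid actions on $F$ and $F_{\gamma(t)}$ and, by transitivity of these actions, extends to a groupoid-equivariant diffeomorphism $\phi_t : F \to F_{\gamma(t)}$. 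Its pointwise fiber Jacobian $J(t)$, defined by $\phi_t^{*} d\mu^{F_{\gamma(t)}} = J(t) \, d\mu^{F}$, is conjugate at different points of $F$ via linear isometries (the derivatives of the $L_{\widehat{g}}$ restricted to fiber tangent spaces), hence constant on $F$. Integrating then gives $V(\gamma(t)) = J(t) \, V(\widehat{\sigma}(f))$, so differentiating at $t = 0$ yields
\[
\widehat{\tau}(\widehat{X}_f) \: = \: J'(0) \: = \: (d \log V)\bigl(d\widehat{\sigma}_f(\widehat{X}_f)\bigr) \: = \: d\widehat{F}(\widehat{X}_f).
\]

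From $\widehat{\tau} = d\widehat{F}$, the three required properties follow. Closedness is immediate, since $d\widehat{F}$ is exact. Because $\widehat{F}$ factors through $\widehat{\sigma}$, it is constant on $\widehat{\sigma}$-fibers, so $d\widehat{F}$ vanishes on fiber-tangent vectors and is invariant under the groupoid action, yielding the $\widehat{\mathcal{G}_{\mathcal{T}}}$-basic property. For $O(q)$-basic-ness, the $O(q)$-equivariance of $\widehat{\sigma}$ together with the isometric action of $O(q)$ on $F_{O(q)} \mathcal{T}$ implies that $O(q)$ permutes $\widehat{\sigma}$-fibers isometrically, so $V$ is $O(q)$-invariant on $\widehat{W}$; hence $\widehat{F}$ is $O(q)$-invariant on $F_{O(q)} \mathcal{T}$, and $d\widehat{F}$ is automatically $O(q)$-invariant and $\pi$-horizontal (as $\widehat{F}$ is constant on the $O(q)$-orbits, i.e., the $\pi$-fibers). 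The hard part is the identity $\widehat{\tau} = d\widehat{F}$: verifying that $J(t)$ is constant on $F$ requires a careful use of the groupoid-equivariance of $\phi_t$ and the homogeneity of the fibers under the isometric action of $\widehat{\mathcal{G}_{\mathcal{T}}}$.
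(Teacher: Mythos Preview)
Your argument hinges on the claim that $\widehat{\sigma}\colon F_{O(q)}\mathcal{T}\to\widehat{W}$ has compact fibers, so that the total fiber volume $V$ is finite and smooth. This is not justified in the paper's setting: $\mathcal{T}$ is only the \emph{interior} of a compact manifold-with-boundary, so $F_{O(q)}\mathcal{T}$ is noncompact, $\widehat{\sigma}$ is not proper, and its fibers need not be compact. Concretely, for the Kronecker foliation with an open arc as transversal the $\overline{\mathcal{G}_{\mathcal{T}}}$-orbit is the whole open arc. Two places in your argument break down without compactness: the geodesic flow $\phi_t$ is not globally defined on the fiber (points near $\partial\overline{\mathcal{T}}$ may flow out of $F_{O(q)}\mathcal{T}$), so your claimed equivariant diffeomorphism $\phi_t\colon F\to F_{\gamma(t)}$ need not exist; and even when the volumes happen to be finite, there is no reason for $V$ to vary smoothly on $\widehat{W}$ when pieces of a fiber can appear or disappear at the boundary of $\mathcal{T}$. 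Note also that your conclusion $\widehat{\tau}=d(\widehat{\sigma}^*\log V)$ would force $[\tau]=0$ in $\HH^1_{\inv}(\mathcal{T})$ for every Riemannian foliation with every choice of transversal, whereas the paper treats this only as a hypothesis (Corollary~\ref{corollary9}); that should already be a warning sign.

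The paper circumvents all of this by making the construction local: it first reads off invariance and horizontality of $\widehat{\tau}$ directly from the definition (isometric actions preserve the fiber density), and then for closedness it defines $\widehat{F}$ on a small neighborhood $N$ of $f$ as the pointwise density ratio $\widehat{F}(f')=\dfrac{d\mu^{f'}}{(\phi_1^{*})^{-1}d\mu^{f}}\Big|_{f'}$, where $\phi_1$ is the geodesic flow along the horizontal lift, defined only near $f$. Your observation that the fiberwise Jacobian of $\phi_t$ is constant along the fiber (by groupoid-equivariance) is exactly the reason this local $\widehat{F}$ is fiber-constant and hence gives $\widehat{\tau}=d\log\widehat{F}$ on $N$; the paper simply uses it pointwise instead of integrating it over a possibly noncompact fiber. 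If you replace your global volume by this local density ratio, your argument goes through.
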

\begin{proof}
The form $\widehat{\tau}$ is clearly
$\widehat{{\mathcal G}_{\mathcal T}}$-invariant and $O(q)$-invariant.
As $\widehat{{\mathcal G}_{\mathcal T}}$ and $O(q)$ act on
$F_{O(q)}{\mathcal T}$ isometrically, if $\widehat{X}_f \in
T_f F_{O(q)}{\mathcal T}$ is tangent to the $\widehat{{\mathcal G}_{\mathcal T}}$-orbit
of $f$, or the $O(q)$-orbit of $f$, then $({\mathcal L}_{\widehat{X}} d\mu)^f = 0$.
Hence $\widehat{\tau}$ is
$\widehat{{\mathcal G}_{\mathcal T}}$-basic and $O(q)$-basic.

To see that $\widehat{\tau}$ is closed, we will define a smooth positive function $\widehat{F}$ in
a neighborhood $N$ of $f$ so that $\widehat{\tau} = d \log \widehat{F}$ there.
(The neighborhood $N$ will be taken small enough so that the following
construction makes sense.)
For $f^\prime \in N$,
we write ${\widehat{\sigma}}(f^\prime) = \exp_{{\widehat{\sigma}}(f)} \widehat{V}$ for a unique
$\widehat{V} \in T_{{\widehat{\sigma}}(f)} \widehat{W}$. Let $\widehat{X}$ be the horizontal lift
of $\widehat{V}$ to ${\widehat{\sigma}}^{-1}({\widehat{\sigma}}(f))$. 
For $f^{\prime \prime} \in \widehat{\sigma}^{-1}(\widehat{\sigma}(f))$,
put
$\phi_1(f^{\prime \prime}) = \exp_{f^{\prime \prime}} 
\widehat{X}_{f^{\prime \prime}}$. Put
\begin{equation} \label{2.9}
\widehat{F}(f^\prime) = \frac{
d\mu^{f^\prime}
}{
(\phi_1^*)^{-1} d\mu^f
} \Big|_{f^\prime}.
\end{equation}
This defines $\widehat{F}$ on $N$ so that $\widehat{\tau} = d \log \widehat{F}$ on $N$.
Hence $\widehat{\tau}$ is closed.
\end{proof}

\begin{corollary} \label{corollary2}
Let $\tau \in \Omega^1( {\mathcal T})$ be the unique $1$-form
such that $\widehat{\tau}= \pi^* \tau$.
Then $\tau$ is closed and $\overline{{\mathcal G}_{\mathcal T}}$-basic.
\end{corollary}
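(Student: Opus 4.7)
The plan is to deduce each property of $\tau$ from the corresponding property of $\widehat{\tau}$ proved in Lemma \ref{lemma3}, by pushing everything through the $O(q)$-quotient $\pi$. The existence and uniqueness of $\tau$ with $\widehat{\tau} = \pi^* \tau$ are already guaranteed by Lemma \ref{lemma3}, since $\pi$ is the $O(q)$-quotient and $\widehat{\tau}$ is $O(q)$-basic. The key general fact I will use repeatedly is that, because $\pi$ is a surjective submersion, the pullback $\pi^*$ on forms is injective and $d\pi$ is pointwise surjective.

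For closedness, I would apply $d$ to $\widehat{\tau} = \pi^* \tau$ to obtain $\pi^*(d\tau) = d\widehat{\tau} = 0$ by Lemma \ref{lemma3}, then invoke injectivity of $\pi^*$ to conclude $d\tau = 0$. For $\overline{{\mathcal G}_{\mathcal T}}$-invariance, I would fix $g \in \overline{{\mathcal G}_{\mathcal T}}$, pick an orthonormal frame $f$ at $r(g)$, and form $\widehat{g} = (f, g) \in \widehat{{\mathcal G}_{\mathcal T}}$. The commutative diagram (\ref{2.3}) applied to the local extensions of $g$ and $\widehat{g}$ gives $d\pi_f \circ d\widehat{g}_{s(\widehat{g})} = dg_{s(g)} \circ d\pi_{f\cdot g}$. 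Combining this with $\widehat{{\mathcal G}_{\mathcal T}}$-invariance of $\widehat{\tau}$ and the identity $\widehat{\tau} = \pi^* \tau$ yields, for any $\widehat{Y} \in T_{f \cdot g} F_{O(q)}{\mathcal T}$,
\begin{equation*}
\tau_{s(g)}(d\pi_{f \cdot g}\widehat{Y}) \: = \: \tau_{r(g)}(dg_{s(g)} \, d\pi_{f\cdot g}\widehat{Y}).
\end{equation*}
Since $d\pi_{f \cdot g}$ is surjective, every $X \in T_{s(g)}{\mathcal T}$ is of the form $d\pi_{f\cdot g}\widehat{Y}$, giving $(dg_{s(g)})^* \tau_{r(g)} = \tau_{s(g)}$.

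For the horizontality part of being $\overline{{\mathcal G}_{\mathcal T}}$-basic, I need $\tau_p(X) = 0$ whenever $X \in T_p {\mathcal T}$ is tangent to the $\overline{{\mathcal G}_{\mathcal T}}$-orbit through $p$. The plan is to lift $X$ to a vector tangent to a $\widehat{{\mathcal G}_{\mathcal T}}$-orbit in $F_{O(q)}{\mathcal T}$, where Lemma \ref{lemma3} applies. Concretely, pick any frame $f$ with $\pi(f) = p$. Using (\ref{2.5}) and the cross-product description of $\widehat{{\mathcal G}_{\mathcal T}}$, the restriction $\pi : \widehat{{\mathcal G}_{\mathcal T}} \cdot f \to \overline{{\mathcal G}_{\mathcal T}} \cdot p$ is surjective: any $q \in \overline{{\mathcal G}_{\mathcal T}} \cdot p$ is $s(g)$ for some $g \in \overline{{\mathcal G}_{\mathcal T}}$ with $r(g) = p$, and then $\pi(f \cdot g) = q$. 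The extendability property of elements of $\overline{{\mathcal G}_{\mathcal T}}$ makes this map a smooth submersion between the two (smooth) orbits, so $X$ lifts to some $\widehat{X} \in T_f(\widehat{{\mathcal G}_{\mathcal T}} \cdot f)$. Then $\tau_p(X) = \widehat{\tau}_f(\widehat{X}) = 0$ by Lemma \ref{lemma3}.

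The main (mild) obstacle is the last step: checking that the projection from the $\widehat{{\mathcal G}_{\mathcal T}}$-orbit of $f$ onto the $\overline{{\mathcal G}_{\mathcal T}}$-orbit of $p$ is a submersion, so that tangent vectors lift. Everything else is a direct application of injectivity of $\pi^*$ and surjectivity of $d\pi$, together with Lemma \ref{lemma3}.
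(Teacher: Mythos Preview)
Your proposal is correct and follows the natural approach: the paper gives no explicit proof of Corollary~\ref{corollary2}, treating it as immediate from Lemma~\ref{lemma3}, and what you have written is precisely the unpacking of that implication via the $O(q)$-quotient $\pi$. Your identification of the only nontrivial point---that tangent vectors to $\overline{{\mathcal G}_{\mathcal T}}$-orbits lift to tangent vectors to $\widehat{{\mathcal G}_{\mathcal T}}$-orbits---is accurate, and can be seen either as you suggest or via the Lie algebroid description in Subsection~\ref{subsection2.5} (the anchor of $\pi^*\overline{{\mathfrak g}_{\mathcal T}}$ covers that of $\overline{{\mathfrak g}_{\mathcal T}}$).
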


Recall the notion of a Haar system for a Lie groupoid; see,
for example, \cite[Definition 1.1]{Tu (2000)}.
Now $\{d\mu^f\}_{f \in F_{O(q)}{\mathcal T}}$ is a Haar system for
$\widehat{{\mathcal G}_{\mathcal T}}$. In particular, $d\mu^f$ is a measure on
$\widehat{{\mathcal G}_{\mathcal T}}$ whose suppport is 
$\widehat{\mathcal G}_{\mathcal T}^f$,
and the family of measures $\{d\mu^f\}_{f \in F_{O(q)}{\mathcal T}}$
is $\widehat{{\mathcal G}_{\mathcal T}}$-invariant in an appropriate sense.

Given $p \in {\mathcal T}$, choose $f \in F_{O(q)}{\mathcal T}$ so
that $\pi(f) = p$. There is a diffeomorphism $i_{p,f} :
\overline{{\mathcal G}_{\mathcal T}}^p \rightarrow
\widehat{{\mathcal G}_{\mathcal T}}^f$ given by
$i_{p,f}(g) = (f,g)$. Let $d\mu^p$ be the measure on
$\overline{{\mathcal G}_{\mathcal T}}$ which is supported on
$\overline{{\mathcal G}_{\mathcal T}}^p$ and is given there by
$i_{p,f}^* d\mu^f$, where we think of $d\mu^f$ as a density
on $\widehat{{\mathcal G}_{\mathcal T}}^f$.
Then $d\mu^p$ is independent of the choice of $f$,
as follows from the fact that
the family $\{d\mu^f\}_{p \in {\mathcal T}}$ is $O(q)$-equivariant.
One can check that
$\{d\mu^p\}_{p \in {\mathcal T}}$ is a Haar system for
$\overline{{\mathcal G}_{\mathcal T}}$.

\begin{example} \label{example4}
Continuing with Example \ref{example3}, given $p \in Z$, the measure $d\mu^p$ on
$\overline{{\mathcal G}_{\mathcal T}}^p \cong G$ can be described as
follows.  Let $\{e_i\}$ be a basis for ${\mathfrak g}$ such that the normalized
Haar measure on $G$ is $d\mu_G = \wedge_i e_i^*$. Let $\{V_i\}$ be
the corresponding vector fields on $Z$.
The action of $V_i$ on $F_{O(q)}Z$ breaks up as
$V^i \oplus \nabla V^i$, with respect to the decomposition
$T F_{O(q)}Z \: = \: \pi^* TZ \oplus TO(q)$ of $TF_{O(q)}Z$ into its
horizontal and vertical subbundles.  (Note that because $V^i$ is
Killing, $\nabla V^i$ is a skew-symmetric $2$-tensor.)
Put
\begin{equation} \label{2.10}
M_{ij}(p) \: = \: \langle V_i(p), V_j(p) \rangle \: + \:
\langle \nabla V_i(p), \nabla V_j(p) \rangle.
\end{equation}
Note that the matrix $M(p)$ is positive-definite.
Then $d\mu^p \: = \: \sqrt{\det(M(p))} \: d\mu_G$.
\end{example}

We now construct a cutoff function for
$\overline{{\mathcal G}_{\mathcal T}}$.

\begin{lemma} \label{lemma4}
There is a nonnegative cutoff function
$\phi \in C^\infty_c({\mathcal T})$ for $\overline{{\mathcal G}_{\mathcal T}}$,
meaning that for all $p \in {\mathcal T}$,
\begin{equation} \label{2.11}
\int_{\overline{{\mathcal G}_{\mathcal T}}^p} \phi^2(s(g)) \: d\mu^p(g) \: = \: 1.
\end{equation}
\end{lemma}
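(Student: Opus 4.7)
The plan is to carry out the standard cutoff construction for proper Lie groupoids equipped with a Haar system, using Lemma \ref{lemma2} to guarantee the integrals converge.

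First I would pick an auxiliary bump function. Since $M$ is compact, the orbit space $W = \sigma(\mathcal{T})$ is compact, and since each component of $\mathcal{T}$ sits inside a compact manifold-with-boundary, $\overline{\mathcal{T}}$ is compact. Using compactness of $W$ together with the surjectivity of $\sigma : \mathcal{T} \to W$, I would cover $W$ by finitely many open sets of the form $\sigma(U_j)$, where each $U_j \subset \mathcal{T}$ is precompact, and build a finite sum of bumps to obtain a nonnegative $\chi \in C^\infty_c(\mathcal{T})$ whose support projects surjectively onto $W$, i.e. every $\overline{{\mathcal G}_{\mathcal T}}$-orbit meets $\{\chi > 0\}$.

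Next I would define
\begin{equation*}
\Psi(p) \: = \: \int_{\overline{{\mathcal G}_{\mathcal T}}^p} \chi^2(s(g)) \: d\mu^p(g).
\end{equation*}
Three properties need to be checked. \emph{Smoothness:} because $\overline{{\mathcal G}_{\mathcal T}}$ is proper (Lemma \ref{lemma2}) and $\chi$ is compactly supported, the set $\{g \in \overline{{\mathcal G}_{\mathcal T}}^p : s(g) \in \supp(\chi)\}$ is compact and varies continuously with $p$, so the integral depends smoothly on $p$ (using smoothness of the Haar system $\{d\mu^p\}$ constructed above). \emph{Invariance:} for any $h \in \overline{{\mathcal G}_{\mathcal T}}$ with $r(h) = p$, right-translation by $h$ sends $\overline{{\mathcal G}_{\mathcal T}}^p$ diffeomorphically to $\overline{{\mathcal G}_{\mathcal T}}^{s(h)}$ and preserves the Haar system, so $\Psi(p) = \Psi(s(h))$; in particular $\Psi$ is constant on orbits. \emph{Positivity:} every orbit meets $\{\chi > 0\}$, so by invariance $\Psi(p) > 0$ for all $p \in \mathcal{T}$.

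Finally I would set
\begin{equation*}
\phi(p) \: = \: \frac{\chi(p)}{\sqrt{\Psi(p)}}.
\end{equation*}
Since $\Psi$ is smooth and strictly positive, $\phi$ is smooth; it is nonnegative and $\supp(\phi) \subset \supp(\chi)$ is compact. Then, using that $\Psi$ is constant on orbits so $\Psi(s(g)) = \Psi(p)$ for $g \in \overline{{\mathcal G}_{\mathcal T}}^p$,
\begin{equation*}
\int_{\overline{{\mathcal G}_{\mathcal T}}^p} \phi^2(s(g)) \: d\mu^p(g) \: = \: \frac{1}{\Psi(p)} \int_{\overline{{\mathcal G}_{\mathcal T}}^p} \chi^2(s(g)) \: d\mu^p(g) \: = \: \frac{\Psi(p)}{\Psi(p)} \: = \: 1.
\end{equation*}
The only nontrivial point is the smoothness of $\Psi$, which is where properness of $\overline{{\mathcal G}_{\mathcal T}}$ is essential; without it the source-fiber integrals over $\supp(\chi)$ could fail to be locally finite or smooth in the parameter $p$.
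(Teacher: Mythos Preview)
Your proof is correct and follows essentially the same approach as the paper: both pick a nonnegative $\chi \in C^\infty_c(\mathcal{T})$ whose support meets every orbit (using compactness of the orbit space $W$) and then normalize by the square root of the orbitwise integral of $\chi^2$. The paper's proof is terser, citing \cite[Proposition 6.11]{Tu (1999)} for the standard construction, whereas you spell out the smoothness, invariance, and positivity of $\Psi$ explicitly.
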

\begin{proof}
The proof is similar to that of
\cite[Proposition 6.11]{Tu (1999)}. The difference is that we use
$\phi^2$ instead of $\phi$ as in \cite[Proposition 6.11]{Tu (1999)}.
Choose any nonnegative
$\psi  \in C_c^\infty({\mathcal T})$ such that $\int_{\overline{{\mathcal G}_{\mathcal T}}^p} \psi^2(s(g)) \: d\mu^p(g)>0$ for all $p \in {\mathcal T}$. (Such a $\psi$ exists
because the orbit space of $\overline{{\mathcal G}_{\mathcal T}}$ is compact).
Then set
\begin{equation}  \label{2.12}
\phi = \frac{\psi }{\sqrt{\int_{\overline{{\mathcal G}_{\mathcal T}}^p}
\psi^2(s(g)) \: d\mu^p(g)}}.
\end{equation}
\end{proof}

\begin{example} \label{example5}
Continuing with Example \ref{example4}, we can take
$\phi^2(p) \: = \: \frac{1}{\sqrt{\det(M(p))}}$.
\end{example}

\subsection{The Lie algebroid of the groupoid closure} \label{subsection2.5}

Molino theory is phrased as a structure on the foliated manifold $M$ in
\cite[Chapter 4]{Moerdijk-Mrcun (2003)} and \cite{Molino (1988)}, and as a structure on the
transversal ${\mathcal T}$ in \cite{Haefliger (1985),Salem (1988a),Salem (1988b)}.
The relationship between them is that the structure on $M$ pulls back from the structure on ${\mathcal T}$
\cite[Section 3.4]{Salem (1988a)}.

Let $\overline{{\mathfrak g}_{{\mathcal T}}}$ be the Lie algebroid of
$\overline{{\mathcal G}_{{\mathcal T}}}$, as defined in \cite[Chapter 6]{Moerdijk-Mrcun (2003)}.
Then $\overline{{\mathfrak g}_{{\mathcal T}}}$ is a $\overline{{\mathcal G}_{{\mathcal T}}}$-equivariant flat
vector bundle over
${\mathcal T}$ whose fibers are copies of a fixed Lie algebra ${\mathfrak g}$.
(The flat connection on $\overline{{\mathfrak g}_{{\mathcal T}}}$ is related to the
extendability of elements of $\overline{{\mathcal G}_{{\mathcal T}}}$.)
The holonomy of the
flat connection on $\overline{{\mathfrak g}_{{\mathcal T}}}$
lies in $\Aut({\mathfrak g})$.
If $P : (U \times {\mathfrak g}) \rightarrow \overline{{\mathfrak g}_{{\mathcal T}}}$ is a
local parallelization of $\overline{{\mathfrak g}_{{\mathcal T}}}$
and $an : \overline{{\mathfrak g}_{{\mathcal T}}} \rightarrow T{\mathcal T}$ is the anchor map
then $an \circ P$ describes a Lie algebra of Killing vector fields on $U$, isomorphic to ${\mathfrak g}$.

The pullback $\pi^* \overline{{\mathfrak g}_{{\mathcal T}}}$ of
$\overline{{\mathfrak g}_{{\mathcal T}}}$ to $F_{O(q)}{\mathcal T}$
is isomorphic to the vertical tangent bundle $T^V F_{O(q)}{\mathcal T}$
of the submersion ${\widehat{\sigma}} \: : \: F_{O(q)}{\mathcal T} \rightarrow \widehat{W}$.

If $M$ is simply-connected then ${\mathfrak g}$ is abelian and
$\overline{{\mathfrak g}_{{\mathcal T}}} \: = \: {\mathcal T} \times {\mathfrak g}$;
see, for example, \cite{Haefliger-Salem (1988)}.

\begin{example} \label{example6}
Continuing with Example \ref{example5}, let ${\mathfrak g}$ be the Lie algebra of $G$.
Then $\overline{{\mathfrak g}_{{\mathcal T}}}$ is the product bundle $Z \times {\mathfrak g}$,
whose flat connection has trivial holonomy. The corresponding vector fields
on ${\mathcal T} = Z$ come from the $G$-action.
\end{example}

\begin{example} \label{example7}
Let $G$ be a finite-dimensional connected Lie group. Let ${\mathfrak g}$ be its Lie algebra.
Give $G$ a right-invariant Riemannian metric.
Let $\Gamma$ be a finite-presented discrete group.  Let $\Gamma \rightarrow G$ be an
injective homomorphism with dense image.  Let
$Y$ be a connected compact manifold with $\pi_1(Y, y_0) = \Gamma$. Let
$\widetilde{Y}$ be the universal cover.  
Suppose that $h : \widetilde{Y} \rightarrow G$ is
a $\Gamma$-equivariant fiber bundle, where $\Gamma$ acts on the right on $G$.

Then $Y$ has a Riemannian foliation ${\mathcal F}$ whose
leaves are the images, in $Y$, of the
connected components of the fibers of $h$. The foliation has dense leaves
and is transversally parallelizable.  Conversely, any Riemannian foliation on
a connected compact manifold, which has dense leaves and is transversally parallelizable,
arises from this construction \cite[Theorem 4.24]{Moerdijk-Mrcun (2003)}.

A transversal ${\mathcal T}$ to ${\mathcal F}$ can be formed by taking
appropriate local sections $U_i \rightarrow \widetilde{Y}$ of $h$.
Then
$\overline{{\mathfrak g}_{{\mathcal T}}}$ is the product bundle ${\mathcal T} \times {\mathfrak g}$,
whose flat connection has trivial holonomy. The corresponding vector fields
on ${\mathcal T} \cong \coprod_i U_i$ are the restrictions of the left-invariant vector
fields on $G$.

Note that in this construction, ${\mathfrak g}$ could be any finite-dimensional Lie algebra.
\end{example}

\subsection{Local transverse structure of a Riemannian foliation} \label{subsection2.6}

We describe the local transverse structure of a Riemannian foliation, following
\cite{Haefliger (1985),Haefliger (1988)}.

Fix $p \in {\mathcal T}$. Let $K$ denote the isotropy group at $p$ for
$\overline{{\mathcal G}_{{\mathcal T}}}$. Let
${\mathfrak k}$ denote the Lie algebra of $K$.
There is an injection $i \: : \: {\mathfrak k} \rightarrow {\mathfrak g}$.
Also, there is a representation $\ad \: : \: K \rightarrow \Aut({\mathfrak g})$
so that \\
1. $\ad \big|_{\mathfrak k}$
is the adjoint representation of $K$ on ${\mathfrak k}$. \\
2. $d\ad_{e}$ is the adjoint representation of ${\mathfrak k}$ on
${\mathfrak g}$, as defined using $i$.

Let $O_p$ be the $\overline{{\mathcal G}_{{\mathcal T}}}$-orbit
of $p$. Its tangent space $T_p O_p$ at $p$ is
isomorphic to ${\mathfrak g}/{\mathfrak k}$.
Put $V \: = \:
(T_p O_p)^\perp \subset T_p {\mathcal T}$. A slice-type theorem gives a representation
$\rho \: : \: K \rightarrow \Aut(V)$ with the property that
$\ad \oplus \rho \: : \: K \rightarrow \Aut(({\mathfrak g}/{\mathfrak k}) \oplus V)$ is
injective.

The quintuple $({\mathfrak g}, K, i, \ad, \rho)$ determines the weak
equivalence class of the
restriction of $\overline{{\mathcal G}_{\mathcal T}}$ (with the \'etale topology) to a small
invariant neighborhood of the orbit $O_p$.

Given such a quintuple, one can construct an explicit local model for
the transverse structure.
We will restrict here to the case when ${\mathfrak g}$ is solvable.
Then there is a Lie group $G$ with Lie algebra ${\mathfrak g}$, containing
$K$ as a subgroup, such that the
restriction of $\overline{{\mathcal G}_{\mathcal T}}$ to
a small invariant neighborhood of the
orbit $O_p$ is weakly equivalent to the cross-product groupoid
$(B(V) \times_K G) \rtimes G_\delta$, where $B(V)$ is a metric ball in $V$.

Finally, define a normal orbit type to be a quintuple $({\mathfrak g}, K, i, \ad, \rho)$
such that the invariant subspace $V^K$ vanishes.  Given a point $p \in {\mathcal T}$
and its associated quintuple $({\mathfrak g}, K, i, \ad, \rho)$, one obtains its
normal orbit type from replacing $V$ by $V/V^K$. There is a natural equivalence
relation on the set of possible normal orbit types.  Then there is a stratification
of ${\mathcal T}$, where each stratum is associated to a given equivalence class of
normal orbit types \cite[Section 3.3]{Haefliger (1985)}.

\subsection{The dualizing sheaf} \label{subsection2.7}

Let ${\mathcal O}_{\mathcal T}$ be the orientation bundle of ${\mathcal T}$.
It is a flat real line bundle on ${\mathcal T}$.
Put ${\mathcal D}_{{\mathcal T}} = \Lambda^{max} \overline{{\mathfrak g}_{{\mathcal T}}}
\otimes {\mathcal O}_{\mathcal T}$. It is
a $\overline{{\mathcal G}_{{\mathcal T}}}$-equivariant
flat real line bundle on ${\mathcal T}$.

The Haar system $\{d\mu^f\}_{f \in F_{O(q)} {\mathcal T}}$ gives a nowhere-zero
$O(q)$-invariant section of the pullback bundle
$\pi^* \Lambda^{max} \overline{{\mathfrak g}_{{\mathcal T}}}
 \cong \Lambda^{max} T^V F_{O(q)}{\mathcal T}$ on $F_{O(q)} {\mathcal T}$.
Tensoring with this section gives an $O(q)$-equivariant isomorphism $\widehat{\mathcal I} \: : \:
\Omega^*(F_{O(q)} {\mathcal T}; \pi^* {\mathcal O}_{{\mathcal T}}) \rightarrow
\Omega^*(F_{O(q)} {\mathcal T}; \pi^* {\mathcal D}_{{\mathcal T}})$.
This isomorphism descends to an isomorphism
${\mathcal I} \: : \: \Omega^*({\mathcal T}; {\mathcal O}_{\mathcal T}) \rightarrow
\Omega^*({\mathcal T}; {\mathcal D}_{\mathcal T})$.

\begin{lemma} \label{lemma5}
${\mathcal I}^{-1} \circ d \circ {\mathcal I} \: = \:
d - \tau \wedge$ on $\Omega^*({\mathcal T}; {\mathcal O}_{\mathcal T})$.
\end{lemma}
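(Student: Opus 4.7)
The plan is to lift the identity to $F_{O(q)}\mathcal{T}$, to compare $\widehat{s}$ to a local flat section of $\pi^* \Lambda^{\max} \overline{\mathfrak{g}_\mathcal{T}}$, and then to descend to $\mathcal{T}$ via the $O(q)$-basic subcomplex and Corollary \ref{corollary2}. The key ingredients are the flatness of both $\pi^*\Lambda^{\max} \overline{\mathfrak{g}_\mathcal{T}}$ and $\pi^*\mathcal{O}_\mathcal{T}$, and the explicit local primitive $\widehat{F}$ for $\widehat{\tau}$ constructed inside the proof of Lemma \ref{lemma3}.

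First I would fix a point $f_0 \in F_{O(q)}\mathcal{T}$ and choose nowhere-zero flat local sections $\widehat{s}_{\text{flat}}$ of $\pi^*\Lambda^{\max}\overline{\mathfrak{g}_\mathcal{T}}$ and $\widehat{\epsilon}$ of $\pi^*\mathcal{O}_\mathcal{T}$ in a neighborhood of $f_0$. Writing the Haar section as $\widehat{s} = H \widehat{s}_{\text{flat}}$ for a nowhere-zero smooth function $H$, we have $\widehat{\mathcal{I}}(\omega \otimes \widehat{\epsilon}) = (H\omega) \otimes (\widehat{s}_{\text{flat}} \otimes \widehat{\epsilon})$. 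Since $\widehat{s}_{\text{flat}} \otimes \widehat{\epsilon}$ is a flat trivialization of $\pi^*\mathcal{D}_\mathcal{T}$, applying $d$ (which on the flat trivialization reduces to the ordinary exterior derivative on the coefficient form) and then $\widehat{\mathcal{I}}^{-1}$ yields
\begin{equation*}
\widehat{\mathcal{I}}^{-1} \circ d \circ \widehat{\mathcal{I}} \: = \: d \: + \: (d\log H) \wedge.
\end{equation*}

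The second step is to identify $d\log H$ with $-\widehat{\tau}$. For this I would relate $H$ to the function $\widehat{F}$ of Lemma \ref{lemma3}. Under the isomorphism $\pi^* \overline{\mathfrak{g}_\mathcal{T}} \cong T^V F_{O(q)}\mathcal{T}$, a flat section of $\pi^*\Lambda^{\max}\overline{\mathfrak{g}_\mathcal{T}}$ corresponds to a fiberwise density preserved by the horizontal geodesic flow $\phi_1$, since $\phi_1$ preserves the $\widehat{\mathcal{G}_\mathcal{T}}$-invariant parallelism $\{V^i\}$ that realizes the flat structure. Thus the push-forward density $(\phi_1^*)^{-1}d\mu^f$ equals $\widehat{s}_{\text{flat}}$ up to a locally constant factor, so $H$ agrees with $\widehat{F}^{\pm 1}$ up to a constant and Lemma \ref{lemma3} gives $d\log H = \pm \widehat{\tau}$; matching against the Lie-derivative sign convention in (\ref{2.7})--(\ref{2.8}) produces the negative sign.

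Finally, since $\widehat{\mathcal{I}}$ is $O(q)$-equivariant, the identity $\widehat{\mathcal{I}}^{-1} \circ d \circ \widehat{\mathcal{I}} = d - \widehat{\tau}\wedge$ on $F_{O(q)}\mathcal{T}$ descends along the quotient by $O(q)$. Combining with the canonical identifications of $\mathcal{O}$- and $\mathcal{D}$-valued forms on $\mathcal{T}$ with the $O(q)$-basic subcomplexes of their pullbacks, and with $\widehat{\tau} = \pi^*\tau$ from Corollary \ref{corollary2}, yields the claim. The main obstacle I anticipate is the second step: correctly matching the flat trivialization of $\pi^*\Lambda^{\max}\overline{\mathfrak{g}_\mathcal{T}}$ (from the Lie algebroid structure) with the horizontal push-forward density appearing in Lemma \ref{lemma3} (from the Riemannian submersion $\widehat{\sigma}$), and pinning down the sign of $d\log H$ against the convention built into the definition of $\widehat{\tau}$.
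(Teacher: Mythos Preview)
Your approach is correct and is essentially the same as the paper's own proof, which consists of the single sentence ``This follows from the local description of $\widehat{\tau} = \pi^*\tau$ as $d\log\widehat{F}$ in the proof of Lemma~\ref{lemma3}.'' You have simply unpacked what that sentence means: lift to $F_{O(q)}\mathcal{T}$, write the Haar section as a function times a flat section, conjugate $d$, and descend. Your identification of step~2 (matching the Lie-algebroid flat trivialization of $\pi^*\Lambda^{\max}\overline{\mathfrak{g}_{\mathcal T}}$ with the horizontally transported density, and fixing the sign) as the only substantive point is exactly right; the paper leaves that step implicit as well.
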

\begin{proof}
This follows from the local description of $\widehat{\tau} = \pi^* \tau$ as
$d \log \widehat{F}$ in the proof of Lemma \ref{lemma3}.
\end{proof}

Let $\HH^*_{\inv}({\mathcal T})$ be the cohomology of the
$\overline{{\mathcal G}_{{\mathcal T}}}$-invariant differential
forms on ${\mathcal T}$, and similarly for
$\HH^*_{\inv}({\mathcal T}; {\mathcal D}_{\mathcal T})$.
Then $\HH^*_{\inv}({\mathcal T})$ is isomorphic to the
basic cohomology $\HH^*_{\bas}(M)$ of the foliated manifold $M$,
which is invariant under foliated homeomorphisms
\cite{El-Kacimi2 (1993)}.
Also,
$\HH^*_{\inv}({\mathcal T}; {\mathcal D}_{{\mathcal T}})$ is isomorphic
to $\HH^*_{\bas}(M; {\mathcal D}_M)$, where ${\mathcal D}_M$ is the
pullback of ${\mathcal D}_{\mathcal T}$ from ${\mathcal T}$ to $M$.
From \cite{Sergiescu (1985)}, for all $0 \le i \le \dim({\mathcal T})$,
there is a nondegenerate pairing
\begin{equation} \label{2.13}
\HH^i_{\inv}({\mathcal T}) \times
\HH^{\dim({\mathcal T}) -i}_{\inv}({\mathcal T}; {\mathcal D}_{{\mathcal T}}) \rightarrow \R.
\end{equation}
More generally,
if $E$ is a $\overline{{\mathcal G}_{{\mathcal T}}}$-equivariant flat real
vector bundle on ${\mathcal T}$ then there is a nondegenerate pairing
\begin{equation} \label{2.14}
\HH^i_{\inv}({\mathcal T}; E) \times
\HH^{\dim({\mathcal T}) -i}_{\inv}({\mathcal T}; E^* \otimes
{\mathcal D}_{{\mathcal T}}) \rightarrow \R.
\end{equation}

The closed $1$-form $\tau$ itself defines a class
$[\tau] \in \HH^1_{\inv}({\mathcal T})$.

If ${\mathcal D}_{{\mathcal T}}$ is topologically trivial, as a
$\overline{{\mathcal G}_{{\mathcal T}}}$-equivariant real
line bundle on ${\mathcal T}$, then we can take the (positive)
square root of its holonomies to obtain ${\mathcal D}_{{\mathcal T}}^\frac12$,
a $\overline{{\mathcal G}_{{\mathcal T}}}$-equivariant
flat real line bundle. We obtain a nondegenerate bilinear form
on $\HH^*_{\inv}({\mathcal T}; {\mathcal D}_{{\mathcal T}}^{\frac12})$ from (\ref{2.14}).
Hence if $\dim({\mathcal T})$ is divisible by four then the basic signature
${{\sigma}}(M, {\mathcal F}; {\mathcal D}_{{\mathcal T}}^{\frac12})$
can be defined to be the index of the quadratic form on
$\HH^{\dim({\mathcal T})/2}_{\inv}({\mathcal T};
{\mathcal D}_{{\mathcal T}}^{\frac12})$.
Note that $\HH^*_{\inv}({\mathcal T}; {\mathcal D}_{{\mathcal T}}^{\frac12})$
is isomorphic to the cohomology of $d \: - \: \frac12 \: \tau \wedge$
on $\Omega^*({\mathcal T})$. If in addition $[\tau] = 0$ then we can write
$\tau = dH$ for some $H \in C^\infty_{\inv}({\mathcal T})$, so
$d \: - \: \frac12 \: \tau \wedge \: = \: e^{H/2} \circ d \circ e^{-H/2}$
is conjugate to $d$ on $\Omega^*({\mathcal T})$.

Similarly, we can define a basic Euler characteristic
${{\chi}}(M, {\mathcal F}; {\mathcal D}_{{\mathcal T}}^{\frac12})$.

\section{Transverse Dirac-type operators} \label{section3}

In this section we construct the basic Dirac-type operator.
Subsection \ref{subsection3.1} relates transverse differentiation with
groupoid integration.  In Subsection \ref{subsection3.2} we define a
map $\alpha$ from holonomy-invariant sections of the
transverse Clifford module to non-invariant sections, and
a map $\beta$ which goes the other way.  We show that
$\beta \circ \alpha = \Id$ and $\beta = \alpha^*$. A
projection operator is then defined by $P = \alpha \circ
\beta$. It comes from the action of an idempotent in the
groupoid algebra. The invariant Dirac-type operator $D_{\inv}$ is the
compression of the transverse Dirac-type operator $D_{\APS}$ by
$P$. In Subsection \ref{subsection3.3}, we write $D_{\inv}$ explicitly as a differential operator.

\subsection{Transverse differentiation} \label{subsection3.1}

Let $E$ be a $\overline{{\mathcal G}_{\mathcal T}}$-equivariant vector bundle on ${\mathcal T}$.
Given $g \in \overline{{\mathcal G}_{\mathcal T}}$ and $e \in E_{s(g)}$, let
$e \cdot g^{-1} \in E_{r(g)}$ denote the action of $g^{-1}$ on $e$.
Given a compactly-supported element
$\xi \in C^\infty_c(\mathcal{T}; E)$, with a slight abuse of notation we write
\begin{equation} \label{3.1}
\int_{\overline{{\mathcal G}_{\mathcal T}}^p}
  \xi_{s(g)} \cdot g^{-1} \: d\mu^p(g)
\end{equation}
for the element of $C^\infty(\mathcal{T}; E)$ whose value at $p \in {\mathcal T}$ is given by (\ref{3.1}).

\begin{lemma} \label{lemma6}
We have an identity in $\Omega^1(\mathcal{T}; E)$ :
\begin{equation} \label{3.2}
\nabla^E \int_{\overline{{\mathcal G}_{\mathcal T}}^p}
  \xi_{s(g)} \cdot g^{-1} \: d\mu^p(g)\: = \:
\int_{\overline{{\mathcal G}_{\mathcal T}}^p}
    (\nabla^E \xi)_{s(g)} \cdot g^{-1} \: d\mu^p(g) \: + \: \tau_p \: \int_{\overline{{\mathcal G}_{\mathcal T}}^p}
  \xi_{s(g)} \cdot g^{-1} \: d\mu^p(g).
\end{equation}
\end{lemma}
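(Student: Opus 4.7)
The plan is to work on the frame bundle $F_{O(q)}\mathcal{T}$, where the Haar measure is governed by the Lie-derivative identity behind $\widehat\tau$ from Subsection \ref{subsection2.4}. Fix $p \in \mathcal{T}$ and $X_p \in T_p\mathcal{T}$; pick $f \in F_{O(q)}\mathcal{T}$ with $\pi(f)=p$; let $\widehat X_f$ be the horizontal lift of $X_p$ for the transverse Levi-Civita connection; and extend $\widehat X_f$ to the unique $\widehat{\mathcal{G}_{\mathcal T}}$-invariant vector field $\widehat X$ on the orbit $\widehat\sigma^{-1}(\widehat\sigma(f))$. Via the isomorphism $i_{p,f}$, the integral in (\ref{3.2}) lifts to a section $\widehat J$ of $\pi^*E$ on $F_{O(q)}\mathcal{T}$; since $\pi^*\nabla^E$ covers $\nabla^E$ and $d\pi(\widehat X_f) = X_p$, it suffices to compute the covariant derivative of $\widehat J$ along $\widehat X_f$.

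Let $\phi_t(f') = \exp_{f'}(t\widehat X_{f'})$ denote the geodesic flow of $\widehat X$. For small $t$, $\phi_t$ is a diffeomorphism from $\widehat\sigma^{-1}(\widehat\sigma(f))$ onto $\widehat\sigma^{-1}(\widehat\sigma(\phi_t(f)))$, so the substitution $\phi_t$ rewrites $\widehat J(\phi_t(f))$ as an integral over the single orbit $\widehat\sigma^{-1}(\widehat\sigma(f))$ whose integrand and measure both depend on $t$. Differentiating at $t=0$, after parallel transport back to $E_p$ along $\gamma(t) = \pi(\phi_t(f))$, splits into two Leibniz contributions. The measure term is handled by (\ref{2.7})--(\ref{2.8}): one has $\frac{d}{dt}\big|_{t=0}\phi_t^*\,d\mu^{\phi_t(f)} = \mathcal{L}_{\widehat X}\,d\mu^f$, and by the $\widehat{\mathcal{G}_{\mathcal T}}$-invariance of $d\mu$, $\widehat X$, and $\widehat\tau$, this Lie derivative equals the constant $\widehat\tau_f(\widehat X_f)\,d\mu^f$ on the whole orbit; the constant equals $\tau_p(X_p)$ via $\widehat\tau = \pi^*\tau$ (Corollary \ref{corollary2}), producing the $\tau_p \int \xi_{s(g)}\cdot g^{-1}\,d\mu^p$ term.

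The integrand term produces the $(\nabla^E\xi)_{s(g)}\cdot g^{-1}$ contribution. The key observation is that $\widehat X$ is the invariant extension of a horizontal vector, so the $\widehat{\mathcal{G}_{\mathcal T}}$-invariance of the horizontal distribution (Subsection \ref{subsection2.3}) forces each $\widehat X_{f\cdot g}$ to be horizontal; consequently the projected curve $\gamma_g(t) := \pi(\phi_t(f\cdot g))$ in $\mathcal{T}$ is a geodesic starting at $s(g)$ with initial velocity $dg^{-1}(X_p)$. Because $g \in \overline{\mathcal{G}_{\mathcal T}}$ extends locally to an isometry of $\mathcal{T}$ and $\nabla^E$ is $\overline{\mathcal{G}_{\mathcal T}}$-equivariant, parallel transport on $E$ along $\gamma$ intertwines the groupoid action $g^{-1}$ with parallel transport along $\gamma_g$; differentiating the integrand at $t=0$ then yields $g^{-1}\cdot \nabla^E_{dg^{-1}(X_p)}\xi = [(\nabla^E\xi)_{s(g)}\cdot g^{-1}](X_p)$. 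Assembling both Leibniz contributions gives (\ref{3.2}).

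The main obstacle is the intertwining in the last paragraph: the covariant derivative on $E$ must pass through the groupoid action $\cdot g^{-1}$ inside the integral. This rests on the invariant extension $\widehat X$ of a horizontal vector remaining horizontal along the orbit (a consequence of the $\widehat{\mathcal{G}_{\mathcal T}}$-invariance of the transverse Levi-Civita parallelism on $F_{O(q)}\mathcal{T}$ noted in Subsection \ref{subsection2.3}), together with the $\overline{\mathcal{G}_{\mathcal T}}$-equivariance of the connection $\nabla^E$; once this intertwining is granted, the Leibniz decomposition on the pulled-back orbit is routine.
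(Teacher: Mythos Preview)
Your proof is correct and follows essentially the same route as the paper: lift to $F_{O(q)}\mathcal{T}$, extend $\widehat X_f$ to a $\widehat{\mathcal{G}_{\mathcal T}}$-invariant field on the orbit, and differentiate under the integral via a Leibniz split into a measure term (handled by (\ref{2.7})--(\ref{2.8})) and an integrand term (handled by the $\widehat{\mathcal{G}_{\mathcal T}}$-invariance of $\widehat\nabla^E=\pi^*\nabla^E$). The paper's version is terser---it simply invokes the invariance of $\widehat\nabla^E$ to write down the Leibniz identity (\ref{3.3}) in one line---whereas you spell out the horizontality of the invariant extension and the intertwining of parallel transport with the groupoid action, but these are exactly the justifications underlying the paper's one-line appeal to equivariance.
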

\begin{proof}
Put $\widehat{\nabla}^E \: = \: \pi^*\nabla^E$ and $\widehat{\xi} \: = \: \pi^*\xi$.
Choose $f \in F_{O(q)} \mathcal{T}$ so that  $\pi(f)=p$.
Given a vector
$\widehat{X}_f \in T_fF_{O(q)}{\mathcal T}$, extend it to a vector field
$\widehat{X}$ on ${\widehat{\sigma}}^{-1}({\widehat{\sigma}}(f))$,
the 
$\widehat{{\mathcal G}_{{\mathcal T}}}$-orbit
of $f$,
so that for all 
$\widehat{g} \in
\widehat{{\mathcal G}_{{\mathcal T}}}^f$,
we have
$d\widehat{g}_{s(\widehat{g})}(\widehat{X}_{s(\widehat{g})}) = \widehat{X}_f$.
By the $\widehat{{\mathcal G}_{{\mathcal T}}}$-invariance of $\widehat{\nabla}^E$,
\begin{align} \label{3.3}
\widehat{\nabla}_{\widehat{X}}^E \int_{\widehat{{\mathcal G}_{\mathcal T}}^f}
\widehat{\xi}_{s(\widehat{g})} \cdot \widehat{g}^{-1}  \: d\mu^f(\widehat{g}) \: & = \:
 \int_{\widehat{{\mathcal G}_{\mathcal T}}^f}
(\widehat{\nabla}^E_{\widehat{X}} \widehat{\xi})_{s(\widehat{g})} \cdot
\widehat{g}^{-1} \: d\mu^f(\widehat{g}) \: + \:
\int_{\widehat{{\mathcal G}_{\mathcal T}}^f}
\widehat{\xi}_{s(\widehat{g})} \cdot \widehat{g}^{-1} \: \mathcal{L}_{\widehat{X}} d\mu^f(\widehat{g}) \\
& = \: \int_{\widehat{{\mathcal G}_{\mathcal T}}^f}
(\widehat{\nabla}^E_{\widehat{X}} \widehat{\xi})_{s(\widehat{g})} \cdot \widehat{g}^{-1} \:
d\mu^f(\widehat{g}) \: + \:
\int_{\widehat{{\mathcal G}_{\mathcal T}}^f}
\widehat{\xi}_{s(\widehat{g})} \cdot \widehat{g}^{-1} \:
\widehat{\tau}(\widehat{X})_{s(\widehat{g})} d\mu^f(\widehat{g}). \notag
\end{align}
Since $\widehat{\tau}(\widehat{X})_{s(g)} \: = \:
\widehat{\tau}(\widehat{X})_f$, the lemma follows.
\end{proof}

\begin{corollary} \label{corollary3}
If $\omega \in \Omega^*_c(\mathcal{T})$ then
\begin{equation} \label{3.4}
d \int_{\overline{{\mathcal G}_{\mathcal T}}^p}
\omega_{s(g)} \cdot g^{-1} \:  d\mu^p(g) \: = \:
\int_{\overline{{\mathcal G}_{\mathcal T}}^p}
(d \omega)_{s(g)} \cdot g^{-1} \: d\mu^p(g) \: + \: \tau_p \wedge \int_{\overline{{\mathcal G}_{\mathcal T}}^p}
\omega_{s(g)} \cdot g^{-1} \: d\mu^p(g).
\end{equation}
\end{corollary}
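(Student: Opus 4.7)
The plan is to deduce Corollary \ref{corollary3} from Lemma \ref{lemma6} by specializing to the bundle of differential forms and antisymmetrizing.

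First I would take $E \: = \: \Lambda^* T^*\mathcal{T}$ with the Levi-Civita connection $\nabla^E$ induced from the Riemannian metric on $\mathcal{T}$. Since $\overline{\mathcal{G}_\mathcal{T}}$ acts by local isometries (by the definition of a Riemannian groupoid in Subsection \ref{subsection2.1}), the Levi-Civita connection is $\overline{\mathcal{G}_\mathcal{T}}$-invariant and $E$ becomes a $\overline{\mathcal{G}_\mathcal{T}}$-equivariant flat-in-the-transverse-sense vector bundle to which Lemma \ref{lemma6} applies. Thus for $\omega \in \Omega^*_c(\mathcal{T})$ we have the identity
\begin{equation}
\nabla^E \int_{\overline{\mathcal{G}_\mathcal{T}}^p} \omega_{s(g)} \cdot g^{-1} \: d\mu^p(g) \: = \: \int_{\overline{\mathcal{G}_\mathcal{T}}^p} (\nabla^E \omega)_{s(g)} \cdot g^{-1} \: d\mu^p(g) \: + \: \tau_p \: \int_{\overline{\mathcal{G}_\mathcal{T}}^p} \omega_{s(g)} \cdot g^{-1} \: d\mu^p(g)
\end{equation}
as an equality in $\Omega^1(\mathcal{T}; \Lambda^* T^*\mathcal{T})$.

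Next I would apply the antisymmetrization (wedge) map $\epsilon \: : \: T^*\mathcal{T} \otimes \Lambda^* T^*\mathcal{T} \rightarrow \Lambda^*T^*\mathcal{T}$, $\epsilon(\eta \otimes \omega) = \eta \wedge \omega$. Because the Levi-Civita connection is torsion-free, one has the standard identity $d \: = \: \epsilon \circ \nabla^E$ on $\Omega^*(\mathcal{T})$. The map $\epsilon$ is fiberwise linear and commutes with the action of $\overline{\mathcal{G}_\mathcal{T}}$ (again because it acts isometrically, hence preserves the wedge product), so it passes through the integrals $\int_{\overline{\mathcal{G}_\mathcal{T}}^p} (\cdot)_{s(g)} \cdot g^{-1} \: d\mu^p(g)$. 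Applying $\epsilon$ to the left-hand side of the displayed identity yields $d \int_{\overline{\mathcal{G}_\mathcal{T}}^p} \omega_{s(g)} \cdot g^{-1} \: d\mu^p(g)$, while applying it to the two terms on the right-hand side yields $\int_{\overline{\mathcal{G}_\mathcal{T}}^p} (d\omega)_{s(g)} \cdot g^{-1} \: d\mu^p(g)$ and $\tau_p \wedge \int_{\overline{\mathcal{G}_\mathcal{T}}^p} \omega_{s(g)} \cdot g^{-1} \: d\mu^p(g)$ respectively. This is precisely formula (\ref{3.4}).

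There is no real obstacle here: the only points requiring a sentence of justification are (i) the $\overline{\mathcal{G}_\mathcal{T}}$-equivariance of the Levi-Civita connection, which is automatic from the isometric action, and (ii) the fact that the wedge map commutes with the groupoid action and thus with the integration over $\overline{\mathcal{G}_\mathcal{T}}^p$. Everything else is a direct transcription of Lemma \ref{lemma6}.
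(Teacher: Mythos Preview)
Your proposal is correct and is exactly the intended specialization: the paper states Corollary \ref{corollary3} without proof as an immediate consequence of Lemma \ref{lemma6}, and your argument (take $E=\Lambda^*T^*\mathcal{T}$ with the Levi-Civita connection and antisymmetrize via $d=\epsilon\circ\nabla^E$) is the standard way to make that deduction explicit.
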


Suppose now that ${\mathcal E}$ is a $\overline{{\mathcal G}_{\mathcal T}}$-equivariant Clifford module
on ${\mathcal T}$. In particular, if
$X \in T_p{\mathcal T}$ then the Clifford action of $X$ is an operator $c(X) \in \End({\mathcal E}_p)$ with
$c(X)^2 \: = \: - \: |X|^2$.
Let $D$ be the Dirac-type operator on
$C^\infty_c({\mathcal T}; {\mathcal E})$. It is
a symmetric operator.
\begin{corollary} \label{corollary4}
If $\xi \in C^\infty_c(\mathcal{T}; \mathcal{E})$ then
\begin{equation} \label{3.5}
D \int_{\overline{{\mathcal G}_{\mathcal T}}^p}
  \xi_{s(g)} \cdot g^{-1} \: d\mu^p(g) \: = \:
\int_{\overline{{\mathcal G}_{\mathcal T}}^p}
    (D \xi)_{s(g)} \cdot g^{-1} \: d\mu^p(g) \: + \: c(\tau_p) \int_{\overline{{\mathcal G}_{\mathcal T}}^p}
  \xi_{s(g)} \cdot g^{-1} \: d\mu^p(g),
\end{equation}
where we have identified $\tau_p$ with its dual vector.
\end{corollary}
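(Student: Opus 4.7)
The plan is to derive Corollary \ref{corollary4} as an immediate consequence of Lemma \ref{lemma6}, by applying Clifford multiplication to both sides of equation (\ref{3.2}) with $E = {\mathcal E}$.

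First, I would observe that the Dirac-type operator factors as $D = c \circ \nabla^{\mathcal E}$, where $\nabla^{\mathcal E} \: : \: C^\infty_c({\mathcal T}; {\mathcal E}) \rightarrow \Omega^1({\mathcal T}; {\mathcal E})$ is the Clifford connection, and $c \: : \: \Omega^1({\mathcal T}; {\mathcal E}) \rightarrow C^\infty_c({\mathcal T}; {\mathcal E})$ is Clifford multiplication (using the metric to identify $T^*{\mathcal T}$ with $T{\mathcal T}$). Applying Lemma \ref{lemma6} to the bundle ${\mathcal E}$ gives
\begin{equation*}
\nabla^{\mathcal E} \int_{\overline{{\mathcal G}_{\mathcal T}}^p}
  \xi_{s(g)} \cdot g^{-1} \: d\mu^p(g) \: = \:
\int_{\overline{{\mathcal G}_{\mathcal T}}^p}
    (\nabla^{\mathcal E} \xi)_{s(g)} \cdot g^{-1} \: d\mu^p(g) \: + \: \tau_p \: \int_{\overline{{\mathcal G}_{\mathcal T}}^p}
  \xi_{s(g)} \cdot g^{-1} \: d\mu^p(g).
\end{equation*}

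Next, I would apply $c$ to both sides. The key point is that because ${\mathcal E}$ is a $\overline{{\mathcal G}_{\mathcal T}}$-equivariant Clifford module, the groupoid acts by Clifford module isomorphisms: for $g \in \overline{{\mathcal G}_{\mathcal T}}$ and $X \in T_{s(g)}{\mathcal T}$, one has $(c(X) e) \cdot g^{-1} = c(dg_{s(g)}(X))(e \cdot g^{-1})$ in ${\mathcal E}_{r(g)}$. Hence Clifford multiplication commutes with the integration operator $\int_{\overline{{\mathcal G}_{\mathcal T}}^p} (\cdot)_{s(g)} \cdot g^{-1} \: d\mu^p(g)$ when applied to sections of $T^*{\mathcal T} \otimes {\mathcal E}$, yielding
\begin{equation*}
c \left( \int_{\overline{{\mathcal G}_{\mathcal T}}^p} (\nabla^{\mathcal E} \xi)_{s(g)} \cdot g^{-1} \: d\mu^p(g) \right) \: = \:
\int_{\overline{{\mathcal G}_{\mathcal T}}^p} (D \xi)_{s(g)} \cdot g^{-1} \: d\mu^p(g).
\end{equation*}

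Finally, for the boundary term, $\tau_p$ is a fixed element of $T_p^*{\mathcal T}$ that does not depend on $g$, so Clifford multiplication by $\tau_p$ (under the metric identification) pulls out of the integral, producing the term $c(\tau_p) \int_{\overline{{\mathcal G}_{\mathcal T}}^p} \xi_{s(g)} \cdot g^{-1} \: d\mu^p(g)$. Combining these three identifications yields (\ref{3.5}). There is no serious obstacle here; the only mildly subtle point is the equivariance of Clifford multiplication, which is built into the definition of a $\overline{{\mathcal G}_{\mathcal T}}$-equivariant Clifford module.
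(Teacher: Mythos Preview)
Your proposal is correct and matches the paper's intended argument: the paper presents Corollary~\ref{corollary4} without proof, as an immediate consequence of Lemma~\ref{lemma6}, which is precisely what you do by composing (\ref{3.2}) with Clifford multiplication and using the $\overline{{\mathcal G}_{\mathcal T}}$-equivariance of the Clifford module structure.
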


\subsection{A projection operator} \label{subsection3.2}

Recall the cutoff function $\phi$ from Lemma \ref{lemma4}.
Let $\left( L^2({\mathcal T}; \mathcal{E}) \right)^{\overline{{\mathcal G}_{\mathcal T}}}$ denote the
${\overline{{\mathcal G}_{\mathcal T}}}$-invariant elements of
$L^2({\mathcal T}; \mathcal{E})$.
Define maps $\alpha \colon \left( L^2({\mathcal T}; \mathcal{E}) \right)^{\overline{{\mathcal G}_{\mathcal T}}} \to
L^2({\mathcal T}; {\mathcal E}) $ and
$\beta\colon L^2({\mathcal T}; {\mathcal E}) \to \left( L^2({\mathcal T}; {\mathcal E}) \right)^{\overline{{\mathcal G}_{\mathcal T}}}$ by
\begin{equation} \label{3.6}
\alpha(\xi) \: = \: \phi \xi
\end{equation}
and
\begin{equation} \label{3.7}
(\beta (\eta))_p \: = \: \int_{g \in \overline{{\mathcal G}_{\mathcal T}}^p}  \eta_{s(g)} \cdot g^{-1}
\: \phi_{s(g)} \: d\mu^p(g).
\end{equation}

\begin{lemma} \label{lemma7}
We have $\beta \circ \alpha = \Id$.
\end{lemma}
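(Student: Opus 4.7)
The plan is a direct unpacking of definitions. First I would compute $\alpha(\xi) = \phi \xi$ for $\xi \in (L^2({\mathcal T}; \mathcal{E}))^{\overline{{\mathcal G}_{\mathcal T}}}$, substitute it into the formula (\ref{3.7}) for $\beta$, and rewrite
\begin{equation}
(\beta(\alpha(\xi)))_p \: = \: \int_{g \in \overline{{\mathcal G}_{\mathcal T}}^p} \phi_{s(g)} \: (\xi_{s(g)} \cdot g^{-1}) \: \phi_{s(g)} \: d\mu^p(g).
\end{equation}

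Next I would invoke the $\overline{{\mathcal G}_{\mathcal T}}$-invariance of $\xi$. By definition, invariance means $\xi_{s(g)} \cdot g^{-1} = \xi_{r(g)} = \xi_p$ for every $g \in \overline{{\mathcal G}_{\mathcal T}}^p$, so $\xi_p$ can be pulled out of the integral, leaving
\begin{equation}
(\beta(\alpha(\xi)))_p \: = \: \left( \int_{g \in \overline{{\mathcal G}_{\mathcal T}}^p} \phi_{s(g)}^2 \: d\mu^p(g) \right) \xi_p.
\end{equation}
Finally I would apply the cutoff identity (\ref{2.11}) from Lemma \ref{lemma4}, which says the bracketed integral equals $1$, giving $(\beta(\alpha(\xi)))_p = \xi_p$, as required.

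There is no real obstacle; the only subtleties worth noting are a clean justification that $\alpha(\xi) = \phi \xi \in L^2({\mathcal T}; \mathcal{E})$ (immediate from $\phi \in C^\infty_c$ and $\xi \in L^2$), and that the integral defining $\beta(\alpha(\xi))$ is absolutely convergent (which follows from the compact support of $\phi$, making the integrand supported on a compact subset of $\overline{{\mathcal G}_{\mathcal T}}^p$). The chosen normalization $\phi^2$ in Lemma \ref{lemma4} is precisely what makes $\beta \circ \alpha$ the identity; this is also the reason, noted in the proof of Lemma \ref{lemma4}, for the departure from the convention in \cite{Tu (1999)}.
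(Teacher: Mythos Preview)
Your proposal is correct and follows essentially the same argument as the paper: substitute $\alpha(\xi)=\phi\xi$ into the defining integral for $\beta$, use $\overline{{\mathcal G}_{\mathcal T}}$-invariance of $\xi$ to pull $\xi_p$ out, and then apply the cutoff identity (\ref{2.11}). The extra remarks you give about $\phi\xi\in L^2$ and absolute convergence are fine but not needed for the paper's version.
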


\begin{proof}
If $\xi \in \left( L^2({\mathcal T}; {\mathcal E}) \right)^{\overline{{\mathcal G}_{\mathcal T}}}$ then
\begin{equation} \label{3.8}
(\beta ( \alpha (\xi)))_p \: =  \: \int_{g \in \overline{{\mathcal G}_{\mathcal T}}^p}  \xi_{s(g)} \cdot g^{-1}
\: \phi^2_{s(g)} \:  d\mu^p(g).
\end{equation}
Since $\xi $ is $\overline{{\mathcal G}_{\mathcal T}}$-invariant, $\xi_{s(g)} \cdot g^{-1} =\xi_p$ and so
\begin{equation} \label{3.9}
\int_{g \in \overline{{\mathcal G}_{\mathcal T}}^p}  \xi_{s(g)} \cdot g^{-1}
\: \phi^2_{s(g)} \:  d\mu^p(g) \: = \:
\xi_p \:
\int_{g \in \overline{{\mathcal G}_{\mathcal T}}^p} \phi^2_{s(g)} \:  d\mu^p(g) \: = \: \xi_p.
\end{equation}
This proves the lemma.
\end{proof}

It follows that $\alpha$ is injective and induces an isomorphism between
$\left( L^2({\mathcal T}; {\mathcal E})
\right)^{\overline{{\mathcal G}_{\mathcal T}}} $
and a subspace of $L^2({\mathcal T}; {\mathcal E})$.
We equip $\left( L^2({\mathcal T}; {\mathcal E})
\right)^{\overline{{\mathcal G}_{\mathcal T}}} $
with the inner product induced by this isomorphism.
Explicitly, for $\xi,\zeta \in \left( L^2({\mathcal T}; {\mathcal E}) \right)^{\overline{{\mathcal G}_{\mathcal T}}}$, we have
\begin{equation} \label{3.10}
 \langle \xi, \zeta \rangle \:  = \: \int_{\mathcal{T}} (\xi_p, \zeta_p) \: \phi^2(p) \: d\mu_{\mathcal{T}}(p),
\end{equation}
where $d\mu_{\mathcal{T}}$ is the Riemannian density on $\mathcal{T}$.
Note that this generally differs from the inner product on
$\left( L^2({\mathcal T}; {\mathcal E}) \right)^{\overline{{\mathcal G}_{\mathcal T}}} $ coming
from its embedding in $L^2({\mathcal T}; {\mathcal E})$.

We define a sheaf ${{\mathcal S}}_\infty$ on ${W}$ by saying that if
${U}$ is an open subset of ${W}$ then ${{\mathcal S}}_\infty({U}) \: = \:
(C^\infty(\sigma^{-1}(U); {\mathcal E}))^{\overline{{\mathcal G}_{{\mathcal T}}}}$. Similarly,
we define a sheaf ${\mathcal S}_2$ on $W$ by
${{\mathcal S}}_2({U}) \: = \:
(L^2(\sigma^{-1}(U); {\mathcal E}))^{\overline{{\mathcal G}_{{\mathcal T}}}}$.
The global sections
${\mathcal S}_2(W)$ are the same as
$\left( L^2({\mathcal T}; \mathcal{E}) \right)^{\overline{{\mathcal G}_{\mathcal T}}}$.

Let $d\mu_{\widehat{W}}$ denote the Riemannian measure on $\widehat{W}$.
Put $d{\mu}_{{W}} \: = \: \iota_* d\mu_{\widehat{W}}$, a measure on $W$.

Given
$\xi, \zeta \in \left( L^2({\mathcal T}; {\mathcal E}) \right)^{\overline{{\mathcal G}_{\mathcal T}}} $,
the pointwise inner product function $(\xi, \zeta)(p)$ pulls back under $\iota$ from a measurable function
on $W$, which we denote by $(\xi, \zeta)(w)$.

\begin{proposition} \label{proposition1}
We have
\begin{equation} \label{3.11}
\langle \xi, \zeta \rangle \: = \: \int_W (\xi, \zeta)(w) \: d\mu_W(w).
\end{equation}
\end{proposition}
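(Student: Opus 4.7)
The plan is to lift the left-hand side to $F_{O(q)}\mathcal{T}$, apply fiber integration along $\widehat{\sigma}$, and use the cutoff identity (\ref{2.11}) to collapse the inner integral to $1$. First, because the Riemannian submersion $\pi \colon F_{O(q)}\mathcal{T} \to \mathcal{T}$ has $O(q)$-fibers of total volume one (by the choice of metric in Subsection \ref{subsection2.3}), we have
\begin{equation*}
\langle \xi, \zeta \rangle \: = \: \int_\mathcal{T} (\xi_p, \zeta_p) \: \phi^2(p) \: d\mu_\mathcal{T}(p) \: = \:
\int_{F_{O(q)}\mathcal{T}} \pi^*\bigl((\xi, \zeta) \phi^2\bigr) \: d\mu_{F_{O(q)}\mathcal{T}}.
\end{equation*}

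Next, since $\widehat{\sigma} \colon F_{O(q)}\mathcal{T} \to \widehat{W}$ is also a Riemannian submersion, Fubini gives
\begin{equation*}
\langle \xi, \zeta \rangle \: = \: \int_{\widehat{W}} \left( \int_{\widehat{\sigma}^{-1}(\widehat{w})} \pi^*\bigl((\xi, \zeta) \phi^2\bigr) \: d\mu^f \right) d\mu_{\widehat{W}}(\widehat{w}),
\end{equation*}
where $d\mu^f$ is the fiberwise Riemannian density on $\widehat{\sigma}^{-1}(\widehat{w})$. Because $(\xi, \zeta)$ is $\overline{\mathcal{G}_\mathcal{T}}$-invariant, its pullback $\pi^*(\xi, \zeta)$ is $\widehat{\mathcal{G}_\mathcal{T}}$-invariant, hence constant along the fibers of $\widehat{\sigma}$ (recall from (\ref{2.4}) that these fibers are the $\widehat{\mathcal{G}_\mathcal{T}}$-orbits). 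We may therefore pull that factor outside the inner integral, reducing to
\begin{equation*}
\int_{\widehat{\sigma}^{-1}(\widehat{w})} \pi^* \phi^2 \: d\mu^f.
\end{equation*}

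Now choose any $f \in \widehat{\sigma}^{-1}(\widehat{w})$ and set $p = \pi(f)$. Under the diffeomorphism $i_{p,f} \colon \overline{\mathcal{G}_\mathcal{T}}^p \to \widehat{\mathcal{G}_\mathcal{T}}^f \cong \widehat{\sigma}^{-1}(\widehat{w})$ we have $\pi(s(f,g)) = \pi(f \cdot g) = s(g)$ and $d\mu^p = i_{p,f}^* d\mu^f$, so the inner integral becomes
\begin{equation*}
\int_{\overline{\mathcal{G}_\mathcal{T}}^p} \phi^2(s(g)) \: d\mu^p(g) \: = \: 1
\end{equation*}
by the cutoff property of $\phi$ in Lemma \ref{lemma4}. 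Finally, $(\xi, \zeta)$, viewed as a function on $\widehat{W}$, is $O(q)$-invariant and descends to a measurable function on $W$; by the definition $d\mu_W = \iota_* d\mu_{\widehat{W}}$ we conclude
\begin{equation*}
\langle \xi, \zeta \rangle \: = \: \int_{\widehat{W}} (\xi, \zeta)(\widehat{w}) \: d\mu_{\widehat{W}}(\widehat{w}) \: = \: \int_W (\xi, \zeta)(w) \: d\mu_W(w).
\end{equation*}

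The only subtle step is matching the fiberwise Riemannian density $d\mu^f$ coming from the submersion $\widehat{\sigma}$ with the Haar measure $d\mu^p$ defined in Subsection \ref{subsection2.4}; but this is built into the very definition of $d\mu^p$ via $i_{p,f}^* d\mu^f$, so in the end the argument is essentially a chain of Fubini applications cleanly closed by the cutoff identity.
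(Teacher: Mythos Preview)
Your proof is correct and follows essentially the same approach as the paper: lift to $F_{O(q)}\mathcal{T}$, apply Fubini along the Riemannian submersion $\widehat{\sigma}$, pull the invariant factor $(\xi,\zeta)$ out of the fiber integral, collapse the remaining $\int \widehat{\phi}^2$ to $1$ via the cutoff identity, and push down to $W$. You are slightly more explicit than the paper in justifying the collapse step through the diffeomorphism $i_{p,f}$, but the argument is the same.
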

\begin{proof}
Put $\widehat{\phi} \: = \: \pi^* \phi$.
Let $d\mu_{F_{O(q)}\mathcal{T}/\widehat{W}}$ denote the Riemannian densities on the preimages of ${\widehat{\sigma}}$.
Then
\begin{align} \label{3.12}
\int_{\mathcal{T}} (\xi_p, \zeta_p) \: \phi^2_p \:
d\mu_{\mathcal{T}}(p) \: & = \: \int_{F_{O(q)}\mathcal{T}}  ((\pi^*\xi)_f, (\pi^*\zeta)_f)
\: \widehat{\phi}^2_f \: d\mu_{F_{O(q)}\mathcal{T}}(f) \\
& = \: \int_{\widehat{W}}  (\pi^*\xi, \pi^*\zeta)(\widehat{w}) \: \left( \int_{F_{O(q)}\mathcal{T}/\widehat{W}} \widehat{\phi}^2_f \:
d\mu_{F_{O(q)}\mathcal{T}/\widehat{W}}(f) \right) d\mu_{\widehat{W}}(\widehat{w}) \notag \\
& = \: \int_{\widehat{W}}  (\pi^*\xi, \pi^*\zeta)(\widehat{w}) \: d\mu_{\widehat{W}}(\widehat{w}) \notag \\
& = \: \int_W (\xi, \zeta)(w) \: d\mu_W(w). \notag
\end{align}
This proves the proposition.
\end{proof}

\begin{corollary} \label{corollary5}
The inner product (\ref{3.10})
on $\left( L^2({\mathcal T}; {\mathcal E}) \right)^{\overline{{\mathcal G}_{\mathcal T}}}$
is independent of the choice of the cut-off function $\phi$.
\end{corollary}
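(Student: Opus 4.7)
The plan is very short: Corollary \ref{corollary5} is an immediate consequence of Proposition \ref{proposition1}. The right-hand side of (\ref{3.11}),
\begin{equation*}
\int_W (\xi, \zeta)(w) \: d\mu_W(w),
\end{equation*}
is defined purely in terms of the pointwise inner product $(\xi,\zeta)$ on ${\mathcal T}$ (which descends to $W$ because $\xi, \zeta$ are $\overline{{\mathcal G}_{\mathcal T}}$-invariant) and the measure $d\mu_W = \iota_* d\mu_{\widehat{W}}$, neither of which involves the cutoff function $\phi$. Since (\ref{3.10}) equals this expression for any choice of $\phi$ satisfying the cutoff condition of Lemma \ref{lemma4}, the inner product $\langle \xi, \zeta \rangle$ is independent of $\phi$.

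So I would simply write a one-line proof invoking Proposition \ref{proposition1} and noting that the integral on the right-hand side of (\ref{3.11}) depends only on the pair $(\xi,\zeta)$ and on the canonical measure $d\mu_W$ on $W$. There is no real obstacle; the work was already done in the computation (\ref{3.12}), which shows that the $\phi$-dependence on ${\mathcal T}$ is exactly absorbed by the fiberwise integration $\int_{F_{O(q)}{\mathcal T}/\widehat{W}} \widehat{\phi}^2_f \: d\mu_{F_{O(q)}{\mathcal T}/\widehat{W}}(f) = 1$, which is the defining property of a cutoff function (Lemma \ref{lemma4}, translated to the frame bundle).
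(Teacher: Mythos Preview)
Your proposal is correct and is exactly the paper's approach: the corollary is stated immediately after Proposition~\ref{proposition1} with no separate proof, precisely because the right-hand side of (\ref{3.11}) is manifestly independent of $\phi$.
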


We will denote $\left( L^2({\mathcal T}; {\mathcal E}) \right)^{\overline{{\mathcal G}_{\mathcal T}}} $,
equipped with the inner product (\ref{3.10}), by $L^2({\mathcal S}, d\mu_W)$.

\begin{proposition} \label{proposition2}
$\beta = \alpha^*$.
\end{proposition}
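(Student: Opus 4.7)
The plan is to verify $\langle \alpha(\xi), \eta \rangle_{L^2({\mathcal T})} = \langle \xi, \beta(\eta) \rangle_{L^2({\mathcal S}, d\mu_W)}$ for $\xi \in \left(L^2({\mathcal T};{\mathcal E})\right)^{\overline{{\mathcal G}_{\mathcal T}}}$ and $\eta \in L^2({\mathcal T};{\mathcal E})$ by direct calculation. Unwinding (\ref{3.6}), (\ref{3.7}) and (\ref{3.10}), the left-hand side equals $\int_{\mathcal T} \phi_p \, (\xi_p, \eta_p) \, d\mu_{\mathcal T}(p)$, while the right-hand side is
\begin{equation*}
\int_{\mathcal T} \phi^2_p \int_{g \in \overline{{\mathcal G}_{\mathcal T}}^p} (\xi_p, \eta_{s(g)} \cdot g^{-1}) \, \phi_{s(g)} \, d\mu^p(g) \, d\mu_{\mathcal T}(p).
\end{equation*}
Since $\overline{{\mathcal G}_{\mathcal T}}$ acts on ${\mathcal E}$ by isometries and $\xi$ is $\overline{{\mathcal G}_{\mathcal T}}$-invariant, $(\xi_p, \eta_{s(g)} \cdot g^{-1}) = (\xi_p \cdot g, \eta_{s(g)}) = (\xi_{s(g)}, \eta_{s(g)})$ whenever $r(g) = p$. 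After this substitution the integrand becomes $\phi^2_{r(g)} \phi_{s(g)} (\xi_{s(g)}, \eta_{s(g)})$, and we have acquired a symmetry between the roles of $r(g)$ and $s(g)$ to exploit.

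The heart of the proof is a Fubini-type interchange based on this symmetry. Imitating the proof of Proposition \ref{proposition1}, I would lift everything to $F_{O(q)}{\mathcal T}$. By (\ref{2.4}), $\widehat{{\mathcal G}_{\mathcal T}} \cong F_{O(q)}{\mathcal T} \times_{\widehat{W}} F_{O(q)}{\mathcal T}$, and under this identification the combined measure $d\mu^f(\widehat{g}) \, d\mu_{F_{O(q)}{\mathcal T}}(f)$ (with $f = r(\widehat{g})$, $f' = s(\widehat{g})$) disintegrates over $\widehat{W}$ as $d\mu_{\widehat{\sigma}^{-1}(\widehat{w})}(f) \otimes d\mu_{\widehat{\sigma}^{-1}(\widehat{w})}(f') \otimes d\mu_{\widehat{W}}(\widehat{w})$, which is manifestly symmetric in $f$ and $f'$. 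Swapping the order of integration, the inner integral becomes $\int_{\widehat{\sigma}^{-1}(\widehat{\sigma}(f'))} \widehat{\phi}^2_f \, d\mu(f) = 1$ by the lifted form of the cutoff identity (\ref{2.11}). What remains is $\int_{F_{O(q)}{\mathcal T}} (\widehat{\xi}_{f'}, \widehat{\eta}_{f'}) \, \widehat{\phi}_{f'} \, d\mu_{F_{O(q)}{\mathcal T}}(f')$, which descends (the integrand being an $O(q)$-invariant pullback and the $O(q)$-fibers having total volume one) to $\int_{\mathcal T} \phi_p \, (\xi_p, \eta_p) \, d\mu_{\mathcal T}(p) = \langle \alpha(\xi), \eta \rangle_{L^2({\mathcal T})}$.

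The one nontrivial ingredient is the symmetry of $d\mu^f \otimes d\mu_{F_{O(q)}{\mathcal T}}$ under the inversion $(f, f') \leftrightarrow (f', f)$ of $\widehat{{\mathcal G}_{\mathcal T}}$---a kind of unimodularity for this Riemannian groupoid---which is visible from the construction of the Haar system as fiberwise Riemannian measure on the symmetric fiber product. Once this is granted, the remainder of the argument is a mechanical combination of the invariance of $\xi$, the frame-bundle pullback technique of Proposition \ref{proposition1}, and the defining property of the cutoff function.
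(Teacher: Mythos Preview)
Your proposal is correct and follows essentially the same route as the paper's proof: lift to $F_{O(q)}{\mathcal T}$, use the $\overline{{\mathcal G}_{\mathcal T}}$-invariance of $\xi$ to move the pointwise inner product to $s(\widehat{g})$, invoke the inversion-invariance of the measure $d\mu_{\widehat{{\mathcal G}_{\mathcal T}}}$ on $\widehat{{\mathcal G}_{\mathcal T}}$, and then collapse the remaining integral with the cutoff identity. The paper states the inversion-invariance directly, whereas you justify it via the symmetric disintegration of $d\mu^f \otimes d\mu_{F_{O(q)}{\mathcal T}}$ over $\widehat{W}$ coming from (\ref{2.4}); this is a nice way to make that point explicit.
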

\begin{proof}
Choose
$\eta \in L^2({\mathcal T}; {\mathcal E})$ and
$\xi \in \left( L^2({\mathcal T}; {\mathcal E}) \right)^{\overline{{\mathcal G}_{\mathcal T}}} $.
Then
\begin{equation} \label{3.13}
\langle \beta \eta, \xi \rangle \: = \: \int_{F_{O(q)}\mathcal{T}} \int_{\widehat{{\mathcal G}_{\mathcal T}}^f} \widehat{\phi}_f^2
\: \widehat{\phi}_{s(\widehat{g})} \: \left( (\pi^*\eta)_{s(\widehat{g}))}\cdot \widehat{g}^{-1},
(\pi^*\xi)_f \right) \: d\mu^f(\widehat{g}) \: d\mu_{F_{O(q)}\mathcal{T}}(f).
\end{equation}
Using the $\overline{{\mathcal G}_{\mathcal T}}$-invariance of $\xi$,
\begin{align} \label{3.14}
& \int_{F_{O(q)}\mathcal{T}} \int_{\widehat{{\mathcal G}_{\mathcal T}}^f} \widehat{\phi}_f^2
\: \widehat{\phi}_{s(\widehat{g})} \: \left( (\pi^*\eta)_{s(\widehat{g}))}\cdot \widehat{g}^{-1},
(\pi^*\xi)_f \right) \: d\mu^f(\widehat{g}) \: d\mu_{F_{O(q)}\mathcal{T}}(f) \: = \\
& \int_{\widehat{{\mathcal G}_{\mathcal T}}} \widehat{\phi}_{r(\widehat{g})}^2 \: \widehat{\phi}_{s(\widehat{g})} \:
\left( (\pi^*\eta)_{s(\widehat{g})}, (\pi^*\xi)_{s(\widehat{g})} \right)  \:
d\mu_{\widehat{{\mathcal G}_{\mathcal T}} }(\widehat{g}), \notag
\end{align}
where $d\mu_{\widehat{{\mathcal G}_{\mathcal T}} }$ is  the measure on $\widehat{{\mathcal G}_{\mathcal T}}$ induced by the Haar system $\{d\mu^f\}_{f \in F_{O(q)}{\mathcal T}}$ and the Riemannian measure $d\mu_{F_{O(q)}\mathcal{T}}$.
Since $d\mu_{\widehat{{\mathcal G}_{\mathcal T}} }$ is invariant under the involution
$\widehat{g} \mapsto \widehat{g}^{-1}$
on $\widehat{{\mathcal G}_{\mathcal T}}$,
\begin{align} \label{3.15}
& \int_{\widehat{{\mathcal G}_{\mathcal T}}} \widehat{\phi}_{r(\widehat{g})}^2 \: \widehat{\phi}_{s(\widehat{g})} \:
\left( (\pi^*\eta)_{s(\widehat{g})}, (\pi^*\xi)_{s(\widehat{g})} \right)  \:
d\mu_{\widehat{{\mathcal G}_{\mathcal T}} }(\widehat{g}) \: = \\
& \int_{\widehat{{\mathcal G}_{\mathcal T}}} \widehat{\phi}_{s(\widehat{g})}^2 \: \widehat{\phi}_{r(\widehat{g})} \:
\left( (\pi^*\eta)_{r(\widehat{g})}, (\pi^*\xi)_{r(\widehat{g})} \right) \:
d\mu_{\widehat{{\mathcal G}_{\mathcal T}} }(\widehat{g}) \: = \notag \\
& \int_{F_{O(q)}\mathcal{T}} \int_{\widehat{{\mathcal G}_{\mathcal T}}^f} \widehat{\phi}_{s(\widehat{g})}^2 \:
\widehat{\phi}_f \: ((\pi^*\eta)_{f}, (\pi^*\xi)_{f}) \:  d\mu^f(\widehat{g}) \:
d\mu_{F_{O(q)}\mathcal{T}}(f) \:= \notag \\
& \int_{F_{O(q)}\mathcal{T}}    \widehat{\phi}_f \: \left( (\pi^*\eta)_{f}, (\pi^*\xi)_{f}) \right) \:
d\mu_{F_{O(q)}\mathcal{T}}(f) \: =
\int_{\mathcal{T}}    \phi_f \: (\eta_{p}, \xi_{p}) \:  d\mu_{\mathcal{T}}(p) \: = \: \langle \eta , \alpha \xi \rangle. \notag
\end{align}
This proves the proposition.
\end{proof}

\begin{corollary} \label{corollary6}
$P \: = \: \alpha \circ \beta$ is an  orthogonal projection on $ L^2({\mathcal T}; {\mathcal E}) $.
\end{corollary}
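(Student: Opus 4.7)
The plan is to combine the two previous results—Lemma \ref{lemma7} ($\beta \circ \alpha = \Id$) and Proposition \ref{proposition2} ($\beta = \alpha^*$)—to verify the two defining properties of an orthogonal projection, namely $P^2 = P$ and $P^* = P$.

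First I would check idempotency. Writing
\begin{equation*}
P^2 \: = \: (\alpha \circ \beta) \circ (\alpha \circ \beta) \: = \: \alpha \circ (\beta \circ \alpha) \circ \beta,
\end{equation*}
and applying Lemma \ref{lemma7} to the middle composition gives $P^2 = \alpha \circ \Id \circ \beta = \alpha \circ \beta = P$.

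Next I would check self-adjointness. Using Proposition \ref{proposition2}, together with the fact that taking adjoints is an involution on bounded operators between Hilbert spaces (so $(\alpha^*)^* = \alpha$), one computes
\begin{equation*}
P^* \: = \: (\alpha \circ \beta)^* \: = \: \beta^* \circ \alpha^* \: = \: \alpha \circ \beta \: = \: P.
\end{equation*}

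The only mild subtlety, which is the point that really requires the previous work, is that $\alpha$ is not merely the inclusion of $(L^2({\mathcal T}; {\mathcal E}))^{\overline{{\mathcal G}_{\mathcal T}}}$ into $L^2({\mathcal T}; {\mathcal E})$: its domain carries the renormalized inner product (\ref{3.10}) (with the cutoff factor $\phi^2$), not the restricted inner product from $L^2({\mathcal T}; {\mathcal E})$. For $P = \alpha \circ \beta$ to be a genuine orthogonal projection on $L^2({\mathcal T}; {\mathcal E})$, it is essential that $\beta$ be the adjoint of $\alpha$ with respect to precisely this renormalized inner product, and this is exactly the content of Proposition \ref{proposition2}. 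Once that is in hand, the computation is immediate, so there is no real obstacle beyond correctly invoking the two preceding results.
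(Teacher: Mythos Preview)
Your proof is correct and follows exactly the approach the paper intends: the corollary is stated immediately after Lemma \ref{lemma7} and Proposition \ref{proposition2} with no separate proof, precisely because idempotency and self-adjointness of $P$ follow at once from $\beta\circ\alpha=\Id$ and $\beta=\alpha^*$ as you have written. Your remark about the renormalized inner product is also on target---indeed, with that inner product $\alpha$ is an isometry onto its image, which is another way to see that $P=\alpha\alpha^*$ is the orthogonal projection onto $\Image(\alpha)$.
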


More explicitly,
\begin{equation} \label{3.16}
(P\eta)_p \: = \: \phi_p \: \int_{g \in \overline{{\mathcal G}_{\mathcal T}}^p}  \eta_{s(g)} \cdot g^{-1}
\: \phi_{s(g)} \: d\mu^p(g).
\end{equation}
This shows that $P$ comes from the action of the idempotent $g \rightarrow \phi_{s(g)} \: \phi_{r(g)}$ in the groupoid algebra
$C_c^\infty(\overline{{\mathcal G}_{\mathcal T}})$, which we also denote by $P$.

The maps $\alpha$ and $\beta$ establish an isomorphism between $\Image P$ and
$\left( L^2({\mathcal T}; {\mathcal E}) \right)^{\overline{{\mathcal G}_{\mathcal T}}} $.

\subsection{Spectral triples and the invariant Dirac-type operator} \label{subsection3.3}

Let $D_0$ be the operator on
$\left( L^2({\mathcal T}; {\mathcal E}) \right)^{\overline{{\mathcal G}_{\mathcal T}}}$
which is the restriction of the Dirac-type operator on ${\mathcal T}$ to
$\overline{{\mathcal G}_{\mathcal T}}$-invariant spinor fields.
Let $D_{\APS}$ denote the Dirac-type operator on
$L^2({\mathcal T}; {\mathcal E})$ with Atiyah-Patodi-Singer
(APS) boundary conditions
on $\partial \overline{{\mathcal T}}$
\cite{Atiyah-Patodi-Singer (1975)}.
It is a self-adjoint extension of $D$.
(We do not require a product geometry near $\partial \overline{{\mathcal T}}$.)
Note that $\Image(\alpha) \subset \Dom(D_{\APS})$, since
an element of $\Image(\alpha)$ has compact support in ${\mathcal T}$, i.e.
in the interior of $\overline{\mathcal T}$.

\begin{remark} \label{renark1}
In what follows, the choice of APS boundary conditions is not essential.
Any boundary condition which gives a self-adjoint operator would work
just as well. We invoke APS boundary conditions for clarity.
\end{remark}

\begin{proposition} \label{proposition3}
$(C^\infty_c(\overline{{\mathcal G}_{\mathcal T}}),
L^2({\mathcal T}; {\mathcal E}), D_{\APS})$
is a spectral triple of dimension $q$.
\end{proposition}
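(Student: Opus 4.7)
The plan is to verify the three defining conditions for $(C^\infty_c(\overline{{\mathcal G}_{\mathcal T}}), L^2({\mathcal T}; {\mathcal E}), D_{\APS})$ to be a spectral triple, then identify the spectral dimension: (i) $C^\infty_c(\overline{{\mathcal G}_{\mathcal T}})$ acts by bounded operators on $L^2({\mathcal T}; {\mathcal E})$; (ii) $D_{\APS}$ is self-adjoint with compact resolvent of appropriate summability; (iii) for each $a \in C^\infty_c(\overline{{\mathcal G}_{\mathcal T}})$, the commutator $[D_{\APS}, a]$ extends to a bounded operator. The dimension should come out to be $q = \dim({\mathcal T})$, the codimension of the foliation.

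For (i), I would define the action of $a \in C^\infty_c(\overline{{\mathcal G}_{\mathcal T}})$ on $\eta \in L^2({\mathcal T}; {\mathcal E})$ by
\begin{equation*}
(a\eta)_p \: = \: \int_{g \in \overline{{\mathcal G}_{\mathcal T}}^p} a(g) \: \eta_{s(g)} \cdot g^{-1} \: d\mu^p(g),
\end{equation*}
the same formula which underlies (\ref{3.16}). Boundedness on $L^2$ follows from a Schur test: because $\overline{{\mathcal G}_{\mathcal T}}$ is a proper Lie groupoid (Lemma \ref{lemma2}) and $a$ has compact support, the $d\mu^p$-mass of $\supp(a) \cap \overline{{\mathcal G}_{\mathcal T}}^p$ is uniformly bounded in $p \in {\mathcal T}$, as is its analogue after interchanging $s$ and $r$; the latter uses invariance of $d\mu_{\widehat{{\mathcal G}_{\mathcal T}}}$ under $\widehat{g} \mapsto \widehat{g}^{-1}$, as exploited in Proposition \ref{proposition2}. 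The same invariance makes $a^*(g) = \overline{a(g^{-1})}$ implement the operator adjoint, so the action is a $*$-representation.

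Condition (ii) is essentially standard elliptic theory: $D_{\APS}$ is self-adjoint by construction, and since $\overline{{\mathcal T}}$ is a compact $q$-dimensional manifold-with-boundary, Weyl asymptotics for the self-adjoint elliptic operator $D_{\APS}$ give $\mu_n\!\left((1 + D_{\APS}^2)^{-1/2}\right) = O(n^{-1/q})$. This places the resolvent in the weak Schatten class $\mathcal{L}^{(q,\infty)}$, which simultaneously gives compactness and identifies the spectral dimension as $q$.

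The main work is (iii). Given $a \in C^\infty_c(\overline{{\mathcal G}_{\mathcal T}})$ and $\eta \in C^\infty_c({\mathcal T}; {\mathcal E})$, I would differentiate under the integral, using the fact that each $g$ acts by a local isometric automorphism commuting with Clifford multiplication. By Corollary \ref{corollary4} (adapted to accommodate the weight $a(g)$), the Leibniz rule produces three contributions: the term in which the derivative hits $\eta$ reassembles into $a(D\eta)$; the term in which it hits $a$ gives Clifford multiplication by $da$; and the variation of $d\mu^p$ along fibers produces Clifford multiplication by the mean curvature form $\tau_p$. The first of these cancels the $(D_{\APS} a)\eta$ piece of the commutator, so that $[D_{\APS}, a]\eta$ reduces to the integral of $\left( c(da(g)) + a(g)\, c(\tau_p) \right) \eta_{s(g)} \cdot g^{-1}$ against $d\mu^p(g)$. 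Both $da$ (compactly supported) and $\tau$ (smooth and basic, by Corollary \ref{corollary2}) are bounded, so the same Schur test as in (i) delivers the desired operator bound; extension from $C^\infty_c$ to $\Dom(D_{\APS})$ then follows from this uniform estimate together with the graph-norm closedness of $D_{\APS}$. The main obstacle I anticipate is the careful bookkeeping of the Leibniz rule in the presence of the nontrivial mean curvature, but this is precisely the computation that Corollary \ref{corollary4} has been set up to streamline.
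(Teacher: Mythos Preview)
Your proposal is correct and follows essentially the same approach as the paper's proof, though you supply considerably more detail than the paper does. Two minor points of comparison: first, the paper singles out domain preservation explicitly---since $a$ has compact support, $\supp(a) \subset (s,r)^{-1}(K \times K)$ for some compact $K \subset {\mathcal T}$, so $a\eta$ is compactly supported in the interior and automatically lies in $\Dom(D_{\APS})$; you treat this only implicitly at the end of (iii). Second, for the dimension claim the paper cites Grubb's work on trace expansions for Dirac-type boundary problems rather than invoking Weyl asymptotics generically, which is prudent since the APS boundary condition is nonlocal and the standard Weyl law requires justification in that setting. Your more explicit commutator computation via Corollary~\ref{corollary4} is a valid elaboration of what the paper leaves as a one-line assertion.
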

\begin{proof}
The action of $A \in C^\infty_c(\overline{{\mathcal G}_{\mathcal T}})$ on
$\eta \in L^2({\mathcal T}; {\mathcal E})$ is given by
\begin{equation} \label{action}
(A \eta)_p = \int_{g \in \overline{{\mathcal G}_{\mathcal T}}^p}
A(g) \: \eta_{s(g)} \cdot g^{-1} \: d\mu^p(g).
\end{equation}
As $A$ is compactly supported, there is a compact subset $K$ of ${\mathcal T}$
so that $\supp(A) \subset (s,r)^{-1}(K \times K)$. It follows that
the action of $C^\infty_c(\overline{{\mathcal G}_{\mathcal T}})$ on
$L^2({\mathcal T}; {\mathcal E})$ preserves $\Dom(D_{\APS})$.

Using (\ref{action}), it follows that $[D_{\APS}, A]$ is a bounded
operator on $L^2({\mathcal T}; {\mathcal E})$.
Thus $(C^\infty_c(\overline{{\mathcal G}_{\mathcal T}}),
L^2({\mathcal T}; {\mathcal E}), D_{\APS})$ is a spectral triple.
Finally, from \cite[Section 9]{Grubb (1999)}, the spectral triple
has dimension $q$ in the sense of
\cite[Chapter 4.2]{Connes (1994)}.
\end{proof}

\begin{proposition} \label{proposition4}
We have
\begin{equation} \label{3.17}
\beta \circ D_{\APS} \circ \alpha \: = \: D_0 \: - \: \frac{1}{2} \: c(\tau).
\end{equation}
\end{proposition}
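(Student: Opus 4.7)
The plan is to apply $D_{\APS}$ to an element of the form $\alpha(\xi) = \phi \xi$ (with $\xi$ a $\overline{\mathcal{G}_{\mathcal T}}$-invariant smooth section of compact support in $\mathcal T$), use the Leibniz rule, then average via $\beta$ and read off the outcome with the help of Lemma \ref{lemma6} applied to the scalar section $\phi^2$.

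\textbf{Step 1 (Leibniz rule).} Since $D$ is of Dirac type, $D_{\APS}(\phi \xi) = \phi \, D\xi + c(\nabla \phi)\, \xi$, where we identify the $1$-form $d\phi$ with its dual vector field $\nabla \phi$. Note $\phi \xi$ has compact support in $\mathcal T$, so this computation takes place away from $\partial \overline{\mathcal T}$ and APS boundary conditions are irrelevant here.

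\textbf{Step 2 (apply $\beta$ and use invariance).} Because the Clifford module $\mathcal E$ is $\overline{\mathcal{G}_{\mathcal T}}$-equivariant and the metric is preserved by the groupoid action, the Dirac operator $D$ commutes with the action. Hence $D\xi$ is again $\overline{\mathcal{G}_{\mathcal T}}$-invariant, and so $(D\xi)_{s(g)} \cdot g^{-1} = (D\xi)_p$ for $g \in \overline{\mathcal{G}_{\mathcal T}}^p$. Similarly, the equivariance of Clifford multiplication gives $\bigl(c(\nabla\phi_{s(g)})\,\xi_{s(g)}\bigr)\cdot g^{-1} = c\bigl(dg_{s(g)}(\nabla\phi_{s(g)})\bigr)\,\xi_p$. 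Inserting these into the definition of $\beta$ and using $\int_{\overline{\mathcal{G}_{\mathcal T}}^p} \phi_{s(g)}^2\, d\mu^p(g) = 1$, one obtains
\begin{equation}
\bigl(\beta \circ D_{\APS} \circ \alpha\bigr)(\xi)_p \: = \: (D\xi)_p \: + \: c(X_p)\,\xi_p,
\end{equation}
where
\begin{equation}
X_p \: = \: \int_{\overline{\mathcal{G}_{\mathcal T}}^p} \phi_{s(g)} \, dg_{s(g)}\bigl(\nabla\phi_{s(g)}\bigr)\: d\mu^p(g) \: \in \: T_p\mathcal{T}.
\end{equation}
So it remains to show $X_p = -\tfrac12 \tau_p^\sharp$.

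\textbf{Step 3 (identify $X_p$ via Lemma \ref{lemma6}).} Apply Lemma \ref{lemma6} to the scalar section $\xi = \phi^2$ of the trivial $\overline{\mathcal{G}_{\mathcal T}}$-equivariant line bundle. The left-hand side vanishes, since $\int_{\overline{\mathcal{G}_{\mathcal T}}^p} \phi_{s(g)}^2\, d\mu^p(g) \equiv 1$. Evaluating the remaining identity on a tangent vector $Y \in T_p\mathcal T$ and using $d(\phi^2)_{s(g)}\bigl(dg_{s(g)}^{-1}(Y)\bigr) = 2\phi_{s(g)}\,\bigl\langle dg_{s(g)}(\nabla\phi_{s(g)}),\, Y\bigr\rangle_p$ by the isometry property of $dg_{s(g)}$, one gets
\begin{equation}
0 \: = \: 2\,\bigl\langle X_p,\, Y\bigr\rangle_p \: + \: \tau_p(Y),
\end{equation}
so $X_p = -\tfrac12 \tau_p^\sharp$. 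Consequently $c(X_p) = -\tfrac12 c(\tau_p)$, proving the claim.

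\textbf{Expected obstacle.} The only delicate point is bookkeeping for the $\overline{\mathcal{G}_{\mathcal T}}$-action on Clifford multiplication and on $\nabla\phi$: one must verify that the non-invariance of $\phi$ is fully accounted for by the $dg_{s(g)}$-transport inside the integral, and that this is precisely what Lemma \ref{lemma6} averages into $\tau$. Once that identification is made, the rest is purely algebraic.
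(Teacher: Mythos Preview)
Your proof is correct and follows essentially the same approach as the paper: apply the Leibniz rule to $D_{\APS}(\phi\xi)$, use invariance of $\xi$ and equivariance of Clifford multiplication under $\beta$, then identify the residual term by differentiating the cutoff identity $\int_{\overline{\mathcal G_{\mathcal T}}^p}\phi_{s(g)}^2\,d\mu^p(g)=1$. The only cosmetic difference is that you invoke Lemma~\ref{lemma6} applied to $\phi^2$ to obtain this differentiation, whereas the paper carries it out directly (its equation is exactly the content of Lemma~\ref{lemma6} in this special case).
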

\begin{proof}
Choose $\xi \in \left( L^2({\mathcal T}; {\mathcal E}) \right)^{\overline{{\mathcal G}_{\mathcal T}}}$. Then
\begin{equation} \label{3.18}
D_{\APS} (\alpha (\xi)) \: = \: D_{\APS}(\phi \xi) \: = \: c(d \phi) \: \xi \: + \: \phi \: D_{\APS}(\xi).
\end{equation}
Using the $\overline{{\mathcal G}_{\mathcal T}}$-invariance of the Dirac operator, we obtain
\begin{align} \label{3.19}
(\beta (D_{\APS} (\alpha (\xi))))_p \: = \: &
\int_{g \in \overline{{\mathcal G}_{\mathcal T}}^p}  (D_{\APS} (\alpha (\xi)))_{s(g)} \cdot g^{-1}
\: \phi_{s(g)} \: d\mu^p(g) \\
= \: &
\int_{g \in \overline{{\mathcal G}_{\mathcal T}}^p}  (c(d \phi) \: \xi)_{s(g)} \cdot g^{-1}
\: \phi_{s(g)} \: d\mu^p(g) \: + \notag \\
& \int_{g \in \overline{{\mathcal G}_{\mathcal T}}^p}  \phi_{s(g)} \: (D_{\APS} (\xi))_{s(g)} \cdot g^{-1}
\: \phi_{s(g)} \: d\mu^p(g) \notag \\
= \: &
c\left(\int_{\overline{{\mathcal G}_{\mathcal T}}^p}
(d \phi)_{s(g)} \cdot g^{-1}  \: \phi_{s(g)} \: d\mu^p(g) \right)\xi_p \: + \notag \\
& \left(\int_{\overline{{\mathcal G}_{\mathcal T}}^p}
\phi^2_{s(g)} \:   d\mu^p(g) \right) (D_{0}\xi)_p. \notag
\end{align}
Since
\begin{equation} \label{3.20}
\int_{\overline{{\mathcal G}_{\mathcal T}}^p} \phi^2_{s(g)} \:   d\mu^p(g) \: = \: 1,
\end{equation}
differentiation gives
\begin{align} \label{3.21}
0 \: & = \: 2 \int_{\overline{{\mathcal G}_{\mathcal T}}^p}
(d \phi)_{s(g)} \cdot g^{-1}  \: \phi_{s(g)} \: d\mu^p(g) \: + \:
\int_{\overline{{\mathcal G}_{\mathcal T}}^p}
\phi^2_{s(g)} \: \tau_p \: d\mu^p(g) \\
& = \:
2 \int_{\overline{{\mathcal G}_{\mathcal T}}^p}
(d \phi)_{s(g)} \cdot g^{-1}  \: \phi_{s(g)} \: d\mu^p(g) \: + \: \tau_p. \notag
\end{align}
The proposition follows.
\end{proof}

We define the invariant Dirac operator $D_{\inv}$ on
$\left( L^2({\mathcal T}; \mathcal{E}) \right)^{\overline{{\mathcal G}_{\mathcal T}}}$ by
\begin{equation} \label{3.22}
D_{\inv} \: = \: D_0 \: - \: \frac{1}{2}\: c(\tau).
\end{equation}

\begin{corollary} \label{corollary7}
$D_{\inv}$ is a self-adjoint Fredholm operator.
For all $\theta > 0$, the operator
$e^{- \: \theta D_{\inv}^2}$ is trace-class.
\end{corollary}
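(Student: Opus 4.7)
The plan is to reduce Corollary~\ref{corollary7} to facts about $D_{\APS}$ via the unitary $\alpha$ and the spectral triple structure of Proposition~\ref{proposition3}. By Corollaries~\ref{corollary5} and \ref{corollary6} together with Proposition~\ref{proposition2}, the map $\alpha$ is a unitary isomorphism from $L^2({\mathcal S}, d\mu_W)$ onto $\Image P$. Combining Proposition~\ref{proposition4} with the identity $\alpha \beta = P$ shows that $\alpha$ intertwines $D_{\inv}$ with the compression $\widetilde{D} := P D_{\APS} P \big|_{\Image P}$; namely $\alpha \circ D_{\inv} = P D_{\APS} P \circ \alpha$ on appropriate domains. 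Thus it suffices to establish self-adjointness, Fredholmness, and trace-class heat semigroup for $\widetilde{D}$.

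For self-adjointness, I would introduce the block-diagonal operator
\begin{equation*}
H := P D_{\APS} P + (1-P) D_{\APS} (1-P) \: = \: D_{\APS} - B,
\end{equation*}
with $B := P D_{\APS}(1-P) + (1-P) D_{\APS} P$. A short manipulation expresses $B$ in terms of the commutator $[D_{\APS}, P]$, which is bounded by the spectral triple property of Proposition~\ref{proposition3}; a further calculation using $P^* = P$ and $D_{\APS}^* = D_{\APS}$ gives $B^* = B$. Hence $H = D_{\APS} - B$ is a bounded self-adjoint perturbation of $D_{\APS}$ and is self-adjoint on $\Dom(D_{\APS})$. To upgrade this to genuine block-diagonality, verify that $P \Dom(D_{\APS}) \subset \Dom(D_{\APS})$: for $\eta \in \Dom(D_{\APS})$, the element $P\eta = \phi \cdot \beta \eta$ has compact support in ${\mathcal T}$ (so the APS boundary conditions are trivially satisfied), while $D_{\APS}(P\eta) = [D_{\APS}, P]\eta + P D_{\APS}\eta$ lies in $L^2$ by boundedness of $P$ and $[D_{\APS}, P]$. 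Then $H$ respects the decomposition $L^2({\mathcal T};{\mathcal E}) = \Image P \oplus \Image (1-P)$ at the level of domains, and $\widetilde{D} = H\big|_{\Image P}$ is self-adjoint. Transferring back through the unitary $\alpha$ gives the self-adjointness of $D_{\inv}$.

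For the heat semigroup, Proposition~\ref{proposition3} together with the Grubb-type Weyl asymptotics for Dirac operators with APS boundary conditions ensures that $D_{\APS}$ has discrete spectrum with $e^{-\theta D_{\APS}^2}$ trace-class for every $\theta > 0$. Since $H - D_{\APS} = -B$ is bounded and self-adjoint, the min-max principle gives $|\mu_n(H)| \geq |\lambda_n(D_{\APS})| - \|B\|$, so $e^{-\theta H^2}$ is trace-class as well. The orthogonal block-diagonal structure of $H$ yields $e^{-\theta \widetilde{D}^2}$ trace-class on $\Image P$, and the unitary equivalence via $\alpha$ transfers this to $e^{-\theta D_{\inv}^2}$ on $L^2({\mathcal S}, d\mu_W)$. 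In particular $D_{\inv}$ has compact resolvent and hence is Fredholm.

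The main technical point is the verification $P \Dom(D_{\APS}) \subset \Dom(D_{\APS})$: this is precisely where the spectral triple commutator bound is essential, and where the compact support of the cutoff $\phi$ allows one to sidestep the APS boundary conditions altogether. Once this step is in place, the remainder of the argument reduces to routine bounded-perturbation and min-max arguments applied to $D_{\APS}$.
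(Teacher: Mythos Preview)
Your approach is essentially the same as the paper's: both transfer $D_{\inv}$ to the compression $PD_{\APS}P$ via the unitary $\alpha$, introduce the block-diagonal operator $H = PD_{\APS}P + (1-P)D_{\APS}(1-P)$, and observe that $H$ differs from $D_{\APS}$ by a bounded self-adjoint operator built from $[D_{\APS},P]$. The paper is terser, invoking \cite[Theorem~C]{Getzler-Szenes (1989)} directly for the trace-class property of $e^{-\theta H^2}$, whereas you give a self-contained bounded-perturbation argument; both are valid.

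Two minor remarks. First, your domain-preservation step is slightly circular as written (you use boundedness of $[D_{\APS},P]$ to show $P\eta \in \Dom(D_{\APS})$, but making sense of the commutator on $\eta$ already requires this); the cleanest fix is to invoke directly that elements of $C^\infty_c(\overline{{\mathcal G}_{\mathcal T}})$ preserve $\Dom(D_{\APS})$, which is part of the spectral-triple statement in Proposition~\ref{proposition3}. Second, the literal inequality $|\mu_n(H)| \ge |\lambda_n(D_{\APS})| - \|B\|$ is delicate because the spectrum runs to both $\pm\infty$; it is safer to compare the squares via the quadratic-form estimate $H^2 \ge \tfrac12 D_{\APS}^2 - \|B\|^2$ (from $\|H\eta\| \ge \|D_{\APS}\eta\| - \|B\|\,\|\eta\|$), which then gives $\Tr\,e^{-\theta H^2} \le e^{\theta\|B\|^2}\,\Tr\,e^{-\theta D_{\APS}^2/2} < \infty$ by ordinary min--max.
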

\begin{proof}
The operator $D_{\inv}$ is unitarily equivalent to $P \circ D_{\APS} \circ P$. As $P$ is an idempotent
in the groupoid algebra $C^\infty_c(\overline{{\mathcal G}_{\mathcal T}})$, it follows from
Proposition \ref{proposition3} that $D_{\inv}$ is self-adjoint and Fredholm.

It follows from  \cite[Theorem C]{Getzler-Szenes (1989)} that
$e^{- \: \theta \left[ (P D_{\APS} P)^2 + ((1-P) D_{\APS} (1-P))^2 \right]}$ is trace-class for
all $\theta > 0$.
Then $e^{- \: \theta (P D_{\APS} P)^2}$ is also trace-class.
\end{proof}

\begin{corollary} \label{corollary8}
If $\dim({\mathcal T})$ is even and $D$ is the Gauss-Bonnet operator $d \: + \: d^*$ then
$\Ind(D_{\inv})$ equals the basic Euler characteristic $\chi(M, {\mathcal F}; {\mathcal D}_M^{\frac12})$.
If $\dim({\mathcal T})$ is even and $D$ is the signature operator $d \: + \: d^*$ then
$\Ind(D_{\inv})$ equals the basic signature $\sigma(M, {\mathcal F}; {\mathcal D}_M^{\frac12})$.
\end{corollary}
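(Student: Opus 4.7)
The plan is to realize $D_{\inv}$, in each case, as a Hodge-de Rham operator for an explicit twisted differential on invariant forms, and then read off the index by finite-dimensional Hodge theory.

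For the Gauss-Bonnet case I take ${\mathcal E} = \Lambda^* T^* {\mathcal T}$ with Clifford action $c(X) \omega = X^\flat \wedge \omega - \iota_X \omega$, so that $D = d + d^*$ and, by (\ref{3.22}),
\begin{equation*}
D_{\inv} \: = \: \bigl( d - \tfrac{1}{2} \tau \wedge \bigr) \: + \: \bigl( d^* + \tfrac{1}{2} \iota_\tau \bigr) \: =: \: \widetilde{d} + \widetilde{d}^{\,*}.
\end{equation*}
By Corollary \ref{corollary2}, $\tau$ is closed and $\overline{{\mathcal G}_{\mathcal T}}$-basic, so $\widetilde{d}^{\,2} = 0$ and $\widetilde{d}$ preserves $\Omega^*_{\inv}({\mathcal T})$. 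First I would note that $D_{\inv}$ is self-adjoint on $L^2({\mathcal S}, d\mu_W)$ (being unitarily equivalent to $P D_{\APS} P$ via Corollary \ref{corollary6}), and that the above decomposition is orthogonal with respect to the $\Z$-grading by form degree. Consequently $\widetilde{d}^{\,*}$ really is the formal adjoint of $\widetilde{d}$ in the weighted inner product (\ref{3.10}), without any extra $\phi$-dependent terms being needed.

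Next I would apply Hodge theory to the Fredholm complex $(\Omega^*_{\inv}({\mathcal T}), \widetilde{d})$. By Corollary \ref{corollary7}, $D_{\inv}^2 = \widetilde{d}\,\widetilde{d}^{\,*} + \widetilde{d}^{\,*}\,\widetilde{d}$ has discrete spectrum with finite-dimensional eigenspaces, so the standard orthogonal decomposition identifies $\ker(D_{\inv}) \cap \Omega^p_{\inv}({\mathcal T})$ with the cohomology $\HH^p(\Omega^*_{\inv}({\mathcal T}), \widetilde{d})$. As recalled in Subsection \ref{subsection2.7}, the latter is isomorphic to $\HH^p_{\inv}({\mathcal T}; {\mathcal D}_{\mathcal T}^{\frac12}) \cong \HH^p_{\bas}(M; {\mathcal D}_M^{\frac12})$. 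Summing with signs in the even/odd grading then gives
\begin{equation*}
\Ind(D_{\inv}) \: = \: \sum_p (-1)^p \dim \HH^p_{\bas}(M; {\mathcal D}_M^{\frac12}) \: = \: \chi(M, {\mathcal F}; {\mathcal D}_M^{\frac12}).
\end{equation*}

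For the signature case (with $\dim({\mathcal T}) \equiv 0 \pmod 4$) one uses the Hodge-type involution $\Gamma$ acting as $i^{p(p-1) + q/2} *$ in degree $p$. Both $D = d + d^*$ and $c(\tau) = \tau \wedge - \iota_\tau$ anticommute with $\Gamma$, so $D_{\inv}$ does as well. The same Hodge-theoretic argument identifies $\Ind(D_{\inv})$ with the signature of the quadratic form on $\HH^{q/2}_{\inv}({\mathcal T}; {\mathcal D}_{\mathcal T}^{\frac12})$ induced by the pairing (\ref{2.14}), which is by definition $\sigma(M, {\mathcal F}; {\mathcal D}_M^{\frac12})$. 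The main obstacle I foresee is purely bookkeeping: confirming that self-adjointness of $D_{\inv}$ really forces the degree-$(-1)$ part to be the adjoint of $\widetilde{d}$ in the weighted inner product, and checking that the Hodge pairing coming from $\widetilde{d}^{\,*}$ agrees with the Sergiescu pairing (\ref{2.14}) used to define $\sigma(M, {\mathcal F}; {\mathcal D}_M^{\frac12})$. Once both are verified, Hodge theory completes the proof.
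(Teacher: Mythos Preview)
Your proposal is correct and is exactly the argument the paper has in mind; the paper's own proof is a single sentence (``As $e^{-\theta D_{\inv}^2}$ is trace-class, we can apply standard Hodge theory''), and what you have written is a careful unpacking of that sentence using the same ingredients: Corollary~\ref{corollary7} for the spectral properties, the identification $D_{\inv} = \widetilde{d} + \widetilde{d}^{\,*}$ with $\widetilde{d} = d - \tfrac12 \tau\wedge$, and the observation from Subsection~\ref{subsection2.7} that the cohomology of $\widetilde{d}$ on invariant forms computes $\HH^*_{\inv}({\mathcal T};{\mathcal D}_{\mathcal T}^{1/2})$. Your self-adjointness argument for why the degree-$(-1)$ part is the adjoint of $\widetilde{d}$ in the weighted inner product is the right way to bypass explicit computation with $\phi$, and the bookkeeping concerns you flag are genuine but routine.
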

\begin{proof}
As $e^{- \: \theta D_{\inv}^2}$ is trace-class,
we can apply standard Hodge theory.
\end{proof}

\begin{remark} \label{remark2}
Let $\ch_{JLO}(D_{\APS})$ be the
JLO cocycle \cite{Jaffe-Lesniewski-Osterwalder (1988)} for the spectral triple
$(C^\infty_c(\overline{{\mathcal G}_{\mathcal T}}), L^2({\mathcal T}; {\mathcal E}), D_{\APS})$
from Proposition \ref{proposition3}.
Then for any $t > 0$,
\begin{equation} \label{3.23}
\Index(D_{\inv}) \: = \: \langle \ch_{JLO}(t D_{\APS}), \ch(P) \rangle.
\end{equation}
One may hope to prove a transverse index theorem by computing
$\lim_{t \rightarrow 0} \langle \ch_{JLO}(t D_{\APS}), \ch(P) \rangle$
as a local expression.
As will become clear in the next section, there are problems with
this approach.
\end{remark}

Given a positive function $h \in (C^\infty({\mathcal T}))^{\overline{{\mathcal G}_{\mathcal T}}}$, we
can write $h \: = \: \sigma^* h_W$ for some  $h_W \in C(W)$.
The operator $D_0 \: - \: \frac12 \: c(\tau)$ on $L^2({\mathcal S}, d\mu_W)$ is
unitarily equivalent to the operator
$D_0 \: - \: \frac12 \: c(\tau \: - \: d \log h)$ on $L^2({\mathcal S}, h_W d\mu_W)$.

\begin{corollary} \label{corollary9}
If $[\tau] = 0$ in $\HH^1_{\inv}({\mathcal T})$ then up to a multiplicative
constant, there is a
unique positive $h \in (C^\infty({\mathcal T}))^{\overline{{\mathcal G}_{\mathcal T}}}$ so that $\tau \: = \:
d \log h$.
Hence in this case, the invariant Dirac operator $D_{\inv}$ is unitarily equivalent to
$D_0$ on $L^2({\mathcal S}, h_W d\mu_W)$.
\end{corollary}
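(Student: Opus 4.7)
The plan has three stages: existence of $h$, uniqueness up to a multiplicative constant, and the unitary equivalence.

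For existence, I would use directly the hypothesis $[\tau]=0$ in $\HH^1_{\inv}(\mathcal{T})$. By Corollary \ref{corollary2}, $\tau$ is a closed $\overline{{\mathcal G}_{\mathcal T}}$-basic (hence invariant) $1$-form, and by definition of invariant cohomology, vanishing of its class means $\tau = dH$ for some $H \in (C^\infty(\mathcal{T}))^{\overline{{\mathcal G}_{\mathcal T}}}$. Setting $h = e^H$ produces a strictly positive invariant function with $d\log h = \tau$.

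For uniqueness, suppose $h_1, h_2$ are two such positive invariant functions. Then $d\log(h_1/h_2) = 0$, so $h_1/h_2$ is locally constant on $\mathcal{T}$. Being $\overline{{\mathcal G}_{\mathcal T}}$-invariant, it descends to a locally constant positive function on the orbit space $W$. Since $M$ is connected and $W$ is the image of $M$ under the continuous leaf-closure quotient map, $W$ is connected. Hence $h_1/h_2$ is a positive constant, which is the claimed uniqueness.

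For the unitary equivalence, I would invoke the statement displayed just before the corollary: for any positive $h \in (C^\infty(\mathcal{T}))^{\overline{{\mathcal G}_{\mathcal T}}}$, writing $h = \sigma^* h_W$, the operator $D_0 - \tfrac12 c(\tau)$ on $L^2({\mathcal S}, d\mu_W)$ is unitarily equivalent to $D_0 - \tfrac12 c(\tau - d\log h)$ on $L^2({\mathcal S}, h_W d\mu_W)$. Applying this with the $h$ constructed above, we have $\tau - d\log h = 0$, so $D_{\inv} = D_0 - \tfrac12 c(\tau)$ becomes $D_0$ on the weighted Hilbert space $L^2({\mathcal S}, h_W d\mu_W)$. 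There is essentially no hard step here: all difficulty has been absorbed into the definition of invariant cohomology, the passage from invariant functions on $\mathcal{T}$ to functions on $W$, and the preceding (conjugation by $h^{1/2}$) remark. If anything, the only point to watch is that the primitive $H$ really is invariant (not merely $\overline{{\mathcal G}_{\mathcal T}}$-invariant up to locally constant ambiguity), but this is automatic from the definition of $\HH^*_{\inv}(\mathcal{T})$.
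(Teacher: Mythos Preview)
Your proof is correct and is exactly the argument the paper has in mind; the paper states Corollary~\ref{corollary9} without proof, treating it as an immediate consequence of the preceding sentence about conjugation by $h^{1/2}$, and you have simply filled in the routine details (existence via $h=e^H$, uniqueness via connectedness of $W$, and the unitary equivalence via $\tau - d\log h = 0$).
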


\begin{example} \label{example8}
Continuing with Example \ref{example6}, suppose that $Z$ is equipped with a $G$-equivariant Clifford module ${\mathcal E}$.
By (\ref{2.8}), $\widehat{\tau} \: = \: d \log \widehat{\sigma}^* \widehat{{\mathcal V}}$,
where $\widehat{{\mathcal V}} \in C^\infty(\widehat{W})$ is
the function for which $\widehat{{\mathcal V}}(\widehat{w}) \: = \: \vol(\widehat{\sigma}^{-1}(\widehat{w}))$.
Then $\tau \: = \: d \log \sigma^* {\mathcal V}$, where ${\mathcal V} \in C({W})$ is defined by
$\widehat{{\mathcal V}} \: = \: \iota^* {\mathcal V}$. In particular,
$[\tau] \: = \: 0$
and $D_{\inv}$ is unitarily equivalent to
$D_0$ on $L^2({\mathcal S}, {\mathcal V} d\mu_W)$. Now
\begin{equation} \label{3.24}
{\mathcal V} d\mu_W \: = \: \iota_* \left( \widehat{{\mathcal V}} d\mu_{\widehat{W}} \right) \: = \:
\iota_* \widehat{\sigma}_* d\mu_{F_{O(q)}Z} \: = \:
{\sigma}_* \pi_* d\mu_{F_{O(q)}Z} \: = \:
\sigma_* d\mu_{Z}.
\end{equation}
Hence $D_{\inv}$ is unitarily equivalent to
$D_0$ on $L^2({\mathcal S}, \sigma_* d\mu_{Z})$, which is what one would expect.
\end{example}

\begin{remark} \label{remark3}
There are several approaches in the literature to the goal of
constructing a good self-adjoint basic Dirac-type operator.

Given a foliated manifold $(M, {\mathcal F})$ with a bundle-like metric
$g_M$ as in \cite[Remark 2.7(7)]{Moerdijk-Mrcun (2003)},
one can consider a normal Clifford module on $M$ and its holonomy-invariant
sections. With this approach, the natural inner product on the
holonomy-invariant sections involves the volume form of $g_M$.
In order to obtain a self-adjoint basic Dirac-type operator with this
approach, one must assume that the mean curvature form $\kappa$
of the foliated
manifold $(M, {\mathcal F})$ is a basic one-form
\cite{Glazebrook-Kamber (1991)}.  Note that the
mean curvature form $\kappa$, which lives on $M$, is distinct from the
mean curvature form $\tau$ in this paper, which lives on ${\mathcal T}$.

Still working on $M$, the problem of self-adjointness was
resolved by means of a modified basic Dirac-type operator,
involving the basic projection of $\kappa$
\cite{Habib-Richardson (2009)}. Given the
transverse metric, it was shown in \cite{Habib-Richardson (2009)}
that the spectrum
is independent of the particular choice of bundle-like metric.

In the present paper we work directly with the transverse
structure, so bundle-like metrics do not enter.  Presumably
our operator $D_{\inv}$ is unitarily equivalent to the
operator considered in \cite{Habib-Richardson (2009)}.

A different approach is to consider the operator $D_+$ mapping from
the positive-chirality holonomy-invariant sections to the
negative-chirality holonomy-invariant sections. One then obtains a
self-adjoint operator $D = D_+ + D_+^*$, albeit not an explicit one.
This is essentially the approach of \cite{El-Kacimi (1990)}.
Different choices of inner product will change the definition of
$D_+^*$ but will not affect $\Index(D_+)$.
\end{remark}

\section{The case of a compact group action} \label{section4}

In this section we analyze the index of a Dirac-type operator
when it acts on the $T^k$-invariant sections of a $T^k$-equivariant
Clifford module on a compact manifold $Z$.
In Subsection \ref{subsection4.1} we express the index
in terms of the Atiyah-Singer $G$-indices.
In Subsection \ref{subsection4.2} we discuss the problem in switching the
order of integration over $T^k$ and integration over the fixed-point set.
This turns out to be an issue about the nonuniformity of an
asymptotic expansion.

\subsection{An index formula} \label{subsection4.1}

Let
\begin{enumerate}
\item $\Gamma$ be a discrete group,
\item $G$ be a compact connected Lie group,
\item $i \: : \: \Gamma \rightarrow G$
be an injective homomorphism with dense image,
\item $d\mu_G$ be normalized Haar measure on $G$,
\item $Z$ be an
even-dimensional compact connected Riemannian manifold
on which $G$ acts isometrically,
\item ${\mathcal E}$ be a $G$-equivariant Clifford module on $Z$, and
\item $Y$ be a compact connected manifold with $\pi_1(Y, y_0) \: = \: \Gamma$.
\end{enumerate}

Put $M \: = \: (\widetilde{Y} \times Z)/\Gamma$,
where $\Gamma$ acts diagonally on $\widetilde{Y} \times Z$. 
Then $M$ has a Riemannian foliation with complete
transversal $Z$.  Now $\left( L^2(Z; {\mathcal E}) \right)^\Gamma \: = \:
\left( L^2(Z; {\mathcal E}) \right)^G$. Let $D$ be the Dirac-type operator on
$L^2(Z; {\mathcal E})$ and let $D_{\inv}$ be its restriction to
$\left( L^2(Z; {\mathcal E}) \right)^G$. Given $g \in G$, let $\Index(g) \in \R$ denote
its $G$-index, i.e.
$\Index(g) \: = \: \str g \big|_{\Ker(D)}$, where $\str$ denote the supertrace.

\begin{lemma} \label{lemma8}
$\Index(D_{\inv}) \: = \: \int_G \Index(g) \: d\mu_G(g)$.
\end{lemma}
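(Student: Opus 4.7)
The plan is to reduce the statement to the finite-dimensional fact that for a compact group $G$ acting on a finite-dimensional vector space $V$, one has $\dim V^G = \int_G \operatorname{tr}(g|_V) \, d\mu_G(g)$, applied to the $G$-representations on $\Ker(D^+)$ and $\Ker(D^-)$.

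First I would identify $\Ker(D_{\inv})$ with the $G$-invariant subspace of $\Ker(D)$. Since $D$ commutes with the $G$-action on $L^2(Z;{\mathcal E})$, the kernel $\Ker(D)$ is a finite-dimensional (by compactness of $Z$ and ellipticity of $D$) $\Z/2$-graded $G$-representation. In the setting of this section $Z$ is compact, and by Example \ref{example8} the mean curvature form satisfies $\tau = d\log \sigma^*{\mathcal V}$, in particular $[\tau] = 0$. Therefore Corollary \ref{corollary9} applies and $D_{\inv}$ on $\left( L^2(Z;{\mathcal E})\right)^G$ (with its natural inner product from (\ref{3.10})) is unitarily equivalent to $D_0$, i.e.\ to $D$ restricted to $G$-invariant sections, acting on $L^2({\mathcal S}, {\mathcal V}\, d\mu_W) = L^2({\mathcal S}, \sigma_* d\mu_Z)$. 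Under this equivalence, $\Ker(D_{\inv}^{\pm}) \cong \bigl(\Ker D^{\pm}\bigr)^G$.

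Second, I would apply the averaging projection. Since $G$ is compact and $\Ker(D^{\pm})$ is a finite-dimensional unitary $G$-representation, standard representation theory gives
\begin{equation*}
\dim \bigl(\Ker D^{\pm}\bigr)^G \: = \: \int_G \operatorname{tr}\bigl(g|_{\Ker D^{\pm}}\bigr) \, d\mu_G(g).
\end{equation*}
Subtracting the two equalities (using $\Index(D_{\inv}) = \dim \Ker D_{\inv}^+ - \dim \Ker D_{\inv}^-$ and $\Index(g) = \str g|_{\Ker D}$) yields
\begin{equation*}
\Index(D_{\inv}) \: = \: \int_G \Index(g) \, d\mu_G(g),
\end{equation*}
which is the desired identity.

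I do not see any hard step here: everything reduces to the standard averaging formula for invariants in a finite-dimensional representation of a compact group, once the unitary equivalence $D_{\inv} \sim D_0$ from Corollary \ref{corollary9} has been invoked to pass from the twisted inner product on $\left( L^2(Z;{\mathcal E})\right)^G$ to the ordinary restriction of $D$ to $G$-invariant sections. The only point requiring care is verifying that $[\tau] = 0$ in this setup so that Corollary \ref{corollary9} is applicable, but this has already been checked in Example \ref{example8}.
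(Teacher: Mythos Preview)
Your proof is correct and follows essentially the same approach as the paper: both reduce to the averaging formula $\dim V^G = \int_G \tr(g|_V)\,d\mu_G(g)$ applied to the finite-dimensional $G$-representations $\Ker(D)_\pm$. The only difference is that in Section~\ref{section4} the operator $D_{\inv}$ is \emph{defined} directly as the restriction of $D$ to $G$-invariant sections, so your detour through Corollary~\ref{corollary9} and Example~\ref{example8} to establish $\Ker(D_{\inv}^\pm) \cong (\Ker D^\pm)^G$ is unnecessary here (though not incorrect, and it does connect the special case back to the general framework of Section~\ref{section3}).
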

\begin{proof}
The finite-dimensional $\Z_2$-graded vector space $\Ker(D)_\pm$ has an orthogonal decomposition
\begin{equation} \label{4.1}
\Ker(D)_\pm \: = \: \Ker(D)_\pm^G \oplus (\Ker(D)_\pm^G)^\perp.
\end{equation}
Then
\begin{align} \label{4.2}
\Index(D_{\inv}) \: & = \: \dim(\Ker(D)_+^G) - \dim(\Ker(D)_-^G) \\
& = \:
\int_G \tr(g) \big|_{\Ker(D)_+} \: d\mu_G(g) \: - \:
\int_G \tr(g) \big|_{\Ker(D)_-} \: d\mu_G(g) \notag \\
& = \:
\int_G \str(g) \big|_{\Ker(D)} \: d\mu_G(g)
\: = \: \int_G \Index(g) \: d\mu_G(g). \notag
\end{align}
This proves the lemma.
\end{proof}

Let $L(g) \in \R$ be the Atiyah-Segal-Singer Lefschetz-type formula for
$\Index(g)$ \cite{Atiyah-Singer (1968)},\cite[Chapter 6]{Berline-Getzler-Vergne (1992)}.
It is the integral of a certain characteristic form over the fixed-point set $Z^g$.
Then
\begin{equation} \label{4.3}
\Index(D_{\inv}) \: = \: \int_G L(g) \: d\mu_G(g).
\end{equation}

Let $T^k$ be a maximal torus for $G$. Since $L(g)$ is conjugation-invariant, the Weyl integral
formula gives
\begin{equation} \label{4.4}
\Index(D_{\inv}) \: = \: \frac{1}{|\Weyl|} \int_{T^k} L(g) \:
\det \left( \Ad(g^{-1}) - I \right) \Big|_{{\mathfrak g}/{\mathfrak t}^k} \: d\mu_{T^k}(g).
\end{equation}

\subsection{Nonuniformity in the localized short-time expansion} \label{subsection4.2}

We now specialize to the case $G = T^k$.

For simplicity, suppose that $Z$ has a $T^k$-invariant spin structure with
spinor bundle $S^Z$, and ${\mathcal E} = S^Z \otimes {\mathcal W}$ for some
$\Z_2$-graded
$G$-equivariant vector bundle ${\mathcal W}$. Suppose further that each
connected component of
$Z^g$ has a spin
structure. Let $S_N$ denote the normal spinor bundle. Put
$\ch_{\mathcal W}(g) \: = \: \trs \left( g e^{\frac{\sqrt{-1}}{2\pi} F^{\mathcal W}} \right)$.
From \cite[Chapter 6.4]{Berline-Getzler-Vergne (1992)},
\begin{equation} \label{4.5}
L(g) \: = \: \int_{Z^g} \widehat{A}(Z^g) \: \frac{\ch_{\mathcal W}(g)}{\ch_{S_N}(g)}.
\end{equation}
(In order to simplify notation, we have omitted some signs and
powers of $2 \pi i$ in the formula from
\cite[Chapter 6.4]{Berline-Getzler-Vergne (1992)}.)
From (\ref{4.3}), it is clear that the only submanifolds  of $Z$ that contribute
to the integral are the connected components $\{Z^{T^k}_i\}$ of
the fixed-point set $Z^{T^k}$, as the integrals over
the other submanifolds will be of measure zero in $G$.
Then
\begin{equation} \label{4.6}
\Index(D_{\inv}) \: = \: \int_{T^k} \sum_i \int_{Z^{T^k}_i}
\widehat{A}(Z^{T^k}_i) \: \frac{\ch_{\mathcal W}(g)}{\ch_{S_N}(g)}
 \: d\mu_{T^k}(g).
\end{equation}

\begin{example} \label{example9}
Suppose that $Z$ is an oriented manifold whose dimension is divisible by
four. Suppose that $Z$ has an $S^1$-action with
isolated fixed points $\{z_k\}$. Let the $S^1$-action on $T_{z_k}(Z)$
be decomposable as
\begin{equation} \label{4.7}
e^{i\theta} \rightarrow \bigoplus_{l=1}^{\dim(Z)/2}
\begin{pmatrix}
\cos(n_{k,l} \theta) & - \sin(n_{k,l} \theta) \\
\sin(n_{k,l} \theta) & \cos(n_{k,l} \theta).
\end{pmatrix}
\end{equation}
Let $D_{\inv}$ be the signature operator
acting on $S^1$-invariant forms.  Then
\begin{equation} \label{4.8}
\Index(D_{\inv}) \: = \: (-1)^{\frac{\dim(Z)}{4}} \:
\int_{S^1} \sum_k \prod_{l=1}^{\dim(Z)/2}
\cot(n_{k,l} \theta/2) \: \frac{d\theta}{2\pi};
\end{equation}
compare with
\cite[Theorem 6.27]{Atiyah-Bott (1968)}.

Note that in (\ref{4.8}), the sum over $k$ and the integral over $S^1$
generally cannot be interchanged.  For example, suppose that
$\dim(Z)=4$, $k=1$ and $n_{1,1} = n_{1,2} = 1$. Then the contribution
from the fixed point $z_1$ is
\begin{equation} \label{4.9}
- \: \int_{S^1} \cot^2(\theta/2) \: \frac{d\theta}{2\pi} \: = \: - \: \infty.
\end{equation}
What happens is that there are cancellations among the various fixed points.
This cancellation
is ensured by the fact that $L(g)$ is uniformly bounded in $g \in S^1$.
So the integral
(\ref{4.8}) makes sense but one cannot switch the order of
integration and summation. This is a problem if one wants a local
formula for $\Index(D_{\inv})$.

To elaborate on this phenomenon, for any $t > 0$ we can use Lemma \ref{lemma8}
to write
\begin{equation} \label{4.10}
\Index(D_{\inv}) \: = \: \int_{S^1} \Str \left( g \cdot e^{- t D^2} \right)
d\mu_{S^1}(g)
\: = \: \int_{S^1} \int_Z \str e^{- t D^2}(z,zg) \: d\mu_Z(z)\: d\mu_{S^1}(g).
\end{equation}
If $\phi_i$ is an $S^1$-invariant bump function with support near
the fixed point $z_i$ then
\begin{equation} \label{4.11}
\Index(D_{\inv}) \: = \: \sum_i \lim_{t \rightarrow 0}
\int_{S^1} \int_Z \str e^{- t D^2}(z,zg) \: \phi_i(z) \: d\mu_Z(z)
\: d\mu_{S^1}(g).
\end{equation}
By general arguments \cite{Bruening-Heintze (1984)}, there is an asymptotic
expansion
\begin{equation} \label{4.12}
\int_{S^1} \int_Z \str e^{- t D^2}(z,zg) \: \phi_i(z) \: d\mu_Z(z)
\: d\mu_{S^1}(g) \: \sim \: t^{- \dim(Z)/2}
\sum_{j,k =0}^\infty a_{i,j,k} t^{j/2} \: (\log t)^k
\end{equation}
and so
\begin{equation} \label{4.13}
\Index(D_{\inv}) \: = \: \sum_i a_{i,\dim(Z)/2,0}.
\end{equation}

On the other hand, for a fixed $g \in S^1$ there is a computable limit
\begin{equation} \label{4.14}
\lim_{t \rightarrow 0} \int_Z \str e^{- t D^2}(z,zg) \: \phi_i(z) \:
d\mu_Z(z),
\end{equation}
which becomes an integral over $Z^g$.
If one could commute the $\lim_{t \rightarrow 0}$ with
the integration over $g \in S^1$ on
\begin{equation} \label{4.15}
\int_Z \str e^{- t D^2}(z,zg) \: \phi_i(z) \: d\mu_Z(z)
\end{equation}
 then one would conclude that
the asymptotic expansion in (\ref{4.12}) starts at the $t^0$-term, and
that the coefficient of the $t^0$-term is
\begin{equation} \label{4.16}
\int_{S^1}
\lim_{t \rightarrow 0} \int_Z \str e^{- t D^2}(z,zg) \: \phi_i(z) \:
d\mu_Z(z) \: d\mu_{S^1}(g).
\end{equation}
One finds in examples that neither of these are true.
Related phenomena for local traces (as opposed to supertraces) of
basic heat kernels were noted in
\cite{Richardson (1998)}

The underlying reason for the lack of uniformity, in the expansions
with respect to $t$ and $g$, is that the fixed-point set $Z^g$ can
vary wildly in $g$. For example, if the $S^1$-action is effective
then $Z^e = Z$, while $Z^g$ has codimension at least one for any
$g \neq e$, no matter how close $g$ may be to $e$.
\end{example}

\section{The case of abelian Molino sheaf : a
delocalized index theorem} \label{section5}

In this section we prove a delocalized index theorem for $D_{\inv}$
under the assumption that the Molino sheaf is a
holonomy-free sheaf of abelian Lie algebras, and an additional
connectedness assumption on the isotropy groups.
The index formula will be localized in Section \ref{section6}.

In Subsection \ref{subsection5.1} we use local models for the transverse structure
of a Riemannian foliation to write a formula for $\Index(D_{\inv})$
in terms of a parametrix.  As indicated in the preceding section,
there are problems in directly computing the $t \rightarrow 0$ limit of this
index formula, as a local expression.
Hence we use a delocalized approach.
In Subsection \ref{subsection5.2} we rewrite the index formula in terms of the
averaging of a certain almost-periodic function $F_{t, \epsilon}$
that is defined on
the abelian Lie algebra.  The number $F_{t, \epsilon}(X)$ is defined
by a Kirillov-type formula. We show that it is independent of $t$ and
$\epsilon$. In Subsection \ref{subsection5.3} we compute the $t \rightarrow 0$
limit of $F_{t,\epsilon}$.

\subsection{Parametrix} \label{subsection5.1}

Hereafter
we assume that Lie algebra ${\mathfrak g}$ of the Molino sheaf is the abelian
Lie algebra
$\R^k$.
We also assume that the Lie algebroid
$\overline{{\mathfrak g}_{\mathcal T}}$ is a trivial flat
$\R^k$-bundle,
i.e. has trivial holonomy.

Recall the sheaf ${\mathcal S}_2$ on $W$ from Subsection \ref{subsection3.2}.
The invariant operator $D_{\inv}$ is a self-adjoint operator on the
global sections ${\mathcal S}_2(W)$. We will compute the index of $D_{\inv}$ by
constructing a parametrix for $D_{\inv}$. The parametrix will be formed using
a suitable open cover of $W$, along with a partition of unity.

Corollary \ref{corollary9} gives a measure  $h_W \: d\mu_W$ which is canonical
up to a multiplicative constant.

Given $p \in {\mathcal T}$, let $K$ be the isotropy group of
$\overline{{\mathcal G}_{\mathcal T}}$ at $p$.
We assume that $K$ is connected, so $K = T^l$ for some $0 \le l \le k$.
From Subsection \ref{subsection2.6}, there is an invariant neighborhood
$U$ of the orbit ${\mathcal O}_p$
so that the restriction of $\overline{{\mathcal G}_{\mathcal T}}$ to
$U$ is weakly equivalent, as an \'etale groupoid, to the cross-product groupoid
$(B(V) \times_K G) \rtimes G_\delta$. Here $G$ is a
$k$-dimensional connected abelian
Lie group containing $K$, $V$ is a
representation space of $K$ and $B(V)$ is a metric ball in $V$.
The manifold $B(V) \times_K G$ acquires a $G$-invariant Riemannian
metric from the Riemannian foliation.

If $l < k$ then we can
quotient out
by a lattice in $G/K$, so in any case we can assume that $G = T^k$.
Note that there is some freedom in exactly which lattice is chosen.

There is an embedding $B(V)/K \rightarrow W$ and a quotient map
$\sigma \: : \: (B(V) \times_K G) \rightarrow W$.
From Example \ref{example8}, $\sigma_* d\mu_{B(V) \times_K G}$ is a constant
times $(h_W d\mu_W) \big|_{B(V)/K}$.
We will want to fix a normalization for the measure $h_W d\mu_W$.
The normalization that we use
will depend on whether or not there are any points in ${\mathcal T}$
with maximal isotropy group.

Recall from Example \ref{example8} that
in the local model, the relevant measure is ${\mathcal V} \: d\mu_W$. Here
${\mathcal V}$ satisfies $\widehat{{\mathcal V}} = \iota^* {\mathcal V}$, where
$\widehat{{\mathcal V}} \in C^\infty(\widehat{W})$ is
the function for which $\widehat{{\mathcal V}}(\widehat{w}) \: = \:
\vol(\widehat{\sigma}^{-1}(\widehat{w}))$.
If the isotropy group at a point $p \in {\mathcal T}$
is $T^k$ then $\widehat{\sigma}^{-1}(\widehat{w})$ is a
(free) $T^k$-orbit in the frame $F_{O(q)}{\mathcal T}_p$. As its
volume is canonical, i.e. independent of the choice of local model,
we can consistently normalize $h_W d\mu_W$ in a local model
with $K = T^l$ to be ${\mathcal V} \: d\mu_W$.

Using the connectedness of $W$,
this determines $h_W \: d\mu_W$ globally.
Having now normalized $h_W \: d\mu_W$, there may be
local models with $l < k$. For these local models,
we use the freedom in the choice of lattice in $G/K$
to ensure that
$\sigma_* d\mu_{B(V) \times_K G} \: = \:
(h_W d\mu_W) \big|_{B(V)/K}$.

If there are no points in ${\mathcal T}$ with isotropy $T^k$ then
we normalize $h_W \: d\mu_W$ by requiring that
$\int_W h_W \: d\mu_W \: = \: 1$. We can then use the
freedom in the choice of the lattice in $G/K$ to ensure that in
each local model,
$\sigma_* d\mu_{B(V) \times_K G} \: = \:
(h_W d\mu_W) \big|_{B(V)/K}$.

We can find \\
1. Finite open coverings $\{U_\alpha\}$ and $\{U_\alpha^\prime\}$ of $W$, where
$U_\alpha$ has compact closure in $U_\alpha^\prime$, so that
the restriction of $\overline{{\mathcal G}_{\mathcal T}}$, to the preimage of
$U_\alpha^\prime $ in ${\mathcal T}$, is equivalent to
$(B(V_\alpha) \times_{K_\alpha}
G_\alpha) \rtimes G_{\alpha,\delta}$.
(Here $G_\alpha$ is isomorphic to $T^k$.) \\
2. A subordinate partition of unity $\{\eta_\alpha\}$ to $\{U_\alpha\}$ so that
each $\iota^* \eta_\alpha$ is smooth on $\widehat{W}$, \\
3. Functions $\{\rho_\alpha\}$ with support in $U_\alpha^\prime$ so that
each $\iota^* \rho_\alpha$ is smooth on $\widehat{W}$, and
$\rho_\alpha \eta_\alpha = \eta_\alpha$, i.e. $\rho_\alpha \big|_{\supp(\eta_\alpha)} = 1$.

For each $\alpha$, we choose a closed Riemannian manifold $Y_\alpha$
with an isometric $G_\alpha$-action
so that there
is an isometric $G_\alpha$-equivariant embedding
$B(V_\alpha) \times_{K_\alpha} G_\alpha \subset Y_\alpha$.
This can be done, for example, by taking a slight extension of
$B(V_\alpha)$ to a larger ball $B^\prime_\alpha \subset V_\alpha$,
taking the double
of $\overline{B^\prime_\alpha} \times_{K_\alpha} G_\alpha$ and smoothing the
metric. (Alternatively, we could work directly with APS boundary
conditions on
$\overline{B(V_\alpha)} \times_{K_\alpha} G_\alpha$, at the price of
having to deal with manifolds-with-boundary.) We can also assume that
the restriction of ${\mathcal E}$
to $B(V_\alpha) \times_{K_\alpha} G_\alpha$
extends to ${\mathcal E}_\alpha$ on $Y_\alpha$.

Let $D_\alpha$ denote the
Dirac-type operator on $Y_\alpha$.
Let $D_{\inv,\alpha}$ be the
restriction of $D_\alpha$ to
$\left( L^2(Y_\alpha, {\mathcal E}_{Y_\alpha}) \right)^{G_\alpha}$.

Given $t > 0$, put
\begin{equation} \label{5.1}
Q_\alpha = \frac{1-e^{-tD_\alpha^2}}{D_\alpha^2} D_\alpha =
\int_0^t e^{- s D_\alpha^2} \: D_\alpha \: ds
\end{equation}
and
\begin{equation} \label{5.2}
Q_{\inv,\alpha} = \frac{1-e^{-tD_{\inv,\alpha}^2}}{D_{\inv,\alpha}^2} D_{\inv,\alpha} =
\int_0^t e^{-s D_{\inv,\alpha}^2} \: D_{\inv,\alpha} \: ds.
\end{equation}

We let $\widetilde{\eta}_\alpha$ be the extension by zero of
$\sigma^* \eta_\alpha$ to $Y_\alpha$, and similarly for
$\widetilde{\rho}_\alpha$.

\begin{proposition} \label{proposition5}
$\sum_\alpha \rho_\alpha Q_{\inv,\alpha}^\mp \eta_\alpha$ is a parametrix for $D_{\inv}^\pm$.
Also, for all $t > 0$, formally
\begin{equation} \label{5.3}
\Ind(D_{\inv}) \: = \:
\sum_\alpha \Tr_s \left( e^{-t D_{\inv,\alpha}^2} \eta_\alpha \right) + \frac12 \sum_\alpha
\Tr_s \left( Q_{\inv,\alpha} [D_{\inv,\alpha}, \eta_\alpha] \right),
\end{equation}
or more precisely,
\begin{equation} \label{5.4}
\Ind(D_{\inv}) \: = \: \sum_\alpha \Tr_s \left( e^{-t D_{\inv,\alpha}^2} \eta_\alpha \right) + \frac12 \sum_{\alpha,\beta}
\Tr_s \left( \rho_\alpha (Q_{\inv,\alpha} - Q_{\inv,\beta} ) \eta_\beta\: [D_{\inv,\alpha}, \eta_\alpha] \right).
\end{equation}
\end{proposition}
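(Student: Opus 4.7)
The plan is to verify the parametrix identity by direct computation of the local defects, to reduce the defect to a manifestly trace-class form by exploiting the identity $\sum_\alpha c(d\eta_\alpha)=0$, and then to apply the symmetrized parametrix index formula
\[
\Ind(D_{\inv}) \: = \: \tfrac{1}{2}\bigl(\Str(1 - RD_{\inv}) \: + \: \Str(1 - D_{\inv} R)\bigr).
\]
The main technical obstacle will be showing that the difference $Q_{\inv,\alpha} - Q_{\inv,\beta}$, localized to the overlap $U_\alpha \cap U_\beta$, is smoothing.

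First, set $R = \sum_\alpha \rho_\alpha Q_{\inv,\alpha}\eta_\alpha$, where each term is interpreted via the weak equivalence of $\overline{\mathcal G_{\mathcal T}}$ on the preimage of $U_\alpha^\prime$ with the $G_\alpha$-invariant model on $Y_\alpha$. Since $\rho_\alpha Q_{\inv,\alpha}\eta_\alpha\xi$ is supported in $U_\alpha^\prime$, where $D_{\inv}$ coincides with $D_{\inv,\alpha}$, the Leibniz rule and the definition of $Q_{\inv,\alpha}$ give
\[
D_{\inv}(\rho_\alpha Q_{\inv,\alpha}\eta_\alpha\xi) \: = \: c(d\rho_\alpha)Q_{\inv,\alpha}\eta_\alpha\xi \: + \: \rho_\alpha (1 - e^{-tD_{\inv,\alpha}^2})\eta_\alpha\xi.
\]
Summing and using $\sum_\alpha \eta_\alpha = 1$ and $\rho_\alpha \eta_\alpha = \eta_\alpha$ yields $D_{\inv} R = 1 - K_1$ with $K_1 = \sum_\alpha \rho_\alpha e^{-tD_{\inv,\alpha}^2}\eta_\alpha - \sum_\alpha c(d\rho_\alpha)Q_{\inv,\alpha}\eta_\alpha$, and an analogous computation for $RD_{\inv}$ (using $\eta_\alpha D_{\inv} = D_{\inv,\alpha}\eta_\alpha - c(d\eta_\alpha)$ on invariant sections) gives $RD_{\inv} = 1 - K_0$ with $K_0 = \sum_\alpha \rho_\alpha e^{-tD_{\inv,\alpha}^2}\eta_\alpha + \sum_\alpha \rho_\alpha Q_{\inv,\alpha}c(d\eta_\alpha)$. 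The heat pieces are trace-class by Corollary \ref{corollary7}. Each term $c(d\rho_\alpha)Q_{\inv,\alpha}\eta_\alpha$ in $K_1$ has left and right multipliers of disjoint supports (since $\rho_\alpha \equiv 1$ on $\supp\eta_\alpha$), so by Gaussian off-diagonal decay of $e^{-sD_{\inv,\alpha}^2}$ applied to $Q_{\inv,\alpha} = \int_0^t e^{-sD_{\inv,\alpha}^2}D_{\inv,\alpha}\,ds$, this operator has smooth rapidly-decaying kernel and is trace-class; so $K_1$ is trace-class and $R$ is a parametrix.

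The sum $\sum_\alpha \rho_\alpha Q_{\inv,\alpha}c(d\eta_\alpha)$ is not trace-class termwise, but inserting $1 = \sum_\beta \eta_\beta$ and subtracting the formally vanishing sum $\sum_{\alpha,\beta}\rho_\alpha Q_{\inv,\beta}\eta_\beta c(d\eta_\alpha)$ (which vanishes under $\Str$ by graded cyclicity and $\sum_\alpha \rho_\alpha c(d\eta_\alpha) = \sum_\alpha c(d\eta_\alpha) = 0$) rewrites the trace as
\[
\Str\Bigl(\sum_\alpha \rho_\alpha Q_{\inv,\alpha}c(d\eta_\alpha)\Bigr) \: = \: \sum_{\alpha,\beta}\Str\bigl(\rho_\alpha(Q_{\inv,\alpha}-Q_{\inv,\beta})\eta_\beta c(d\eta_\alpha)\bigr).
\]
Each summand on the right is now trace-class: on $\supp\eta_\beta \cap \supp d\eta_\alpha \subset U_\alpha \cap U_\beta$, both $D_{\inv,\alpha}$ and $D_{\inv,\beta}$ coincide with $D_{\inv}$, so a Duhamel comparison of the two heat-semigroup constructions shows that $Q_{\inv,\alpha} - Q_{\inv,\beta}$ sandwiched between the compatible cut-offs is smoothing. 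Finally, I invoke $\Ind(D_{\inv}) = \tfrac12(\Str(K_0) + \Str(K_1))$, use $\Tr_s(\rho_\alpha e^{-tD_{\inv,\alpha}^2}\eta_\alpha) = \Tr_s(e^{-tD_{\inv,\alpha}^2}\eta_\alpha)$ (from $\rho_\alpha\eta_\alpha = \eta_\alpha$ and cyclicity), and note $\Tr_s(c(d\rho_\alpha)Q_{\inv,\alpha}\eta_\alpha) = 0$ by graded cyclicity together with $c(d\rho_\alpha)\eta_\alpha = 0$. Combining these produces (\ref{5.4}); the formal (\ref{5.3}) is the mnemonic $\sum_\alpha \Tr_s(Q_{\inv,\alpha}[D_{\inv,\alpha},\eta_\alpha])$ obtained before one applies the partition-of-unity identity, and its individual summands are not defined because $Q_{\inv,\alpha}$ is of order $-1$ while $[D_{\inv,\alpha},\eta_\alpha]$ is of order $0$. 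The principal technical hurdle is the quantitative Duhamel estimate for $(Q_{\inv,\alpha}-Q_{\inv,\beta})\eta_\beta c(d\eta_\alpha)$: it requires comparing the local heat parametrices on two different ambient manifolds $Y_\alpha$ and $Y_\beta$, using only that they are isometric on the overlap.
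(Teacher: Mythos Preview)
Your proposal is correct and follows essentially the same route as the paper: compute the two defects $1-D_{\inv}R$ and $1-RD_{\inv}$, average their supertraces, drop the $c(d\rho_\alpha)Q_{\inv,\alpha}\eta_\alpha$ term by cyclicity and the support relation $\eta_\alpha\,d\rho_\alpha=0$, and rewrite the remaining $Q_{\inv,\alpha}c(d\eta_\alpha)$ term via the partition-of-unity trick $\sum_\beta\eta_\beta=1$ together with $\sum_\alpha c(d\eta_\alpha)=0$. The only cosmetic difference is that the paper phrases the smoothing arguments in pseudodifferential language (the Schwartz kernel of $[D,\rho_\alpha]Q_\alpha\eta_\alpha$ vanishes near the diagonal, and $(Q_{\inv,\alpha}-Q_{\inv,\beta})\eta_\beta c(d\eta_\alpha)$ is ``clearly well-defined'' because the full symbols agree on the overlap), whereas you invoke Gaussian heat-kernel decay and a Duhamel comparison; these are interchangeable justifications for the same facts.
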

\begin{proof}
First, we have
\begin{align} \label{5.5}
D^-_\alpha \widetilde{\rho}_\alpha Q^+_\alpha \widetilde{\eta}_\alpha \: & = \:
[D^-_\alpha, \widetilde{\rho}_\alpha] Q^+_\alpha \widetilde{\eta}_\alpha \: + \:
\widetilde{\rho}_\alpha D^-_\alpha Q^+_\alpha \widetilde{\eta}_\alpha \\
& = \:
[D^-_\alpha, \widetilde{\rho}_\alpha] Q^+_\alpha \widetilde{\eta}_\alpha \: + \:
\widetilde{\rho}_\alpha
\left( 1 - e^{-tD^-_\alpha D^+_\alpha} \right) \widetilde{\eta}_\alpha \notag \\
& = \:
\widetilde{\eta}_\alpha +
[D^-_\alpha, \widetilde{\rho}_\alpha] Q^+_\alpha \widetilde{\eta}_\alpha \: - \:
\widetilde{\rho}_\alpha
e^{-tD^-_\alpha D^+_\alpha} \widetilde{\eta}_\alpha. \notag
\end{align}
The Schwartz kernel of
$[D^-_\alpha, \widetilde{\rho}_\alpha] Q^+_\alpha \widetilde{\eta}_\alpha$ is
\begin{equation} \label{5.6}
c^-(d\widetilde{\rho}_\alpha(p)) \: Q^+_\alpha(p,p^\prime) \:
\widetilde{\eta}_\alpha(p^\prime).
\end{equation}
As $Q^+_\alpha$ is a pseudodifferential operator, and
$d\widetilde{\rho}_\alpha(p) \: \widetilde{\eta}_\alpha(p^\prime)$
vanishes in a neighborhood of the diagonal $p = p^\prime$, it follows that
$\widetilde{\eta}_\alpha \: - \:
D^-_\alpha (\widetilde{\rho}_\alpha Q^+_\alpha \widetilde{\eta}_\alpha)$ is
a smoothing operator on
$L^2(Y_\alpha; {\mathcal E}_{Y_\alpha}^+)$. In particular,
$\widetilde{\eta}_\alpha \: - \:
D^-_\alpha (\widetilde{\rho}_\alpha Q^+_\alpha \widetilde{\eta}_\alpha)$
is trace-class on $L^2(Y_\alpha; {\mathcal E}_{Y_\alpha}^+)$ and so
its restriction to $\left( L^2(Y_\alpha; {\mathcal E}_{Y_\alpha}^+)
\right)^{G_\alpha}$ is also trace-class.  Hence the operator
\begin{align} \label{5.7}
I \: - \: D^-_{\inv} \sum_\alpha
{\rho}_\alpha Q^+_{\inv,\alpha} {\eta}_\alpha \: & = \:
\sum_\alpha \left(
{\eta}_\alpha \: - \:
D^-_{\inv,\alpha} {\rho}_\alpha Q^+_{\inv,\alpha} {\eta}_\alpha \right) \\
& = \:
\sum_\alpha \left( {\rho}_\alpha
e^{-tD^-_{\inv,\alpha} D^+_{\inv,\alpha}} {\eta}_\alpha \: - \:
[D^-_{\inv,\alpha}, {\rho}_\alpha] Q^+_{\inv,\alpha} {\eta}_\alpha
\right) \notag
\end{align}
is also trace-class.  This shows that $\sum_\alpha
{\rho}_\alpha Q^+_{\inv,\alpha} {\eta}_\alpha$ is a right parametrix for
$D^-_{\inv}$. Hence it is also a left parametrix.

Similarly,
\begin{align} \label{5.8}
\widetilde{\rho}_\alpha Q^+_\alpha \widetilde{\eta}_\alpha D^-_\alpha \: & = \:
\widetilde{\rho}_\alpha Q^+_\alpha D^-_\alpha \widetilde{\eta}_\alpha \: - \:
\widetilde{\rho}_\alpha Q^+_\alpha [D^-_\alpha, \widetilde{\eta}_\alpha] \\
& = \:
\widetilde{\rho}_\alpha
\left( 1 - e^{-tD^+_\alpha D^-_\alpha} \right) \widetilde{\eta}_\alpha \: - \:
\widetilde{\rho}_\alpha Q^+_\alpha [D^-_\alpha, \widetilde{\eta}_\alpha]
 \notag \\
& = \: \widetilde{\eta}_\alpha \: - \: \widetilde{\rho}_\alpha
e^{-tD^+_\alpha D^-_\alpha} \widetilde{\eta}_\alpha \: - \:
\widetilde{\rho}_\alpha Q^+_\alpha [D^-_\alpha, \widetilde{\eta}_\alpha].
 \notag
\end{align}
Then
\begin{equation} \label{5.9}
I \: - \: \left( \sum_\alpha
{\rho}_\alpha Q^+_{\inv,\alpha} {\eta}_\alpha \right) D^-_{\inv}
\: = \:
\sum_\alpha \left( {\rho}_\alpha
e^{-tD^+_{\inv,\alpha} D^-_{\inv,\alpha}} {\eta}_\alpha \: + \:
{\rho}_\alpha Q^+_{\inv,\alpha} [D^-_{\inv,\alpha}, {\eta}_\alpha]
\right).
\end{equation}

Changing signs in (\ref{5.7}) and (\ref{5.9}) gives
\begin{equation} \label{5.10}
I \: - \: D^+_{\inv} \sum_\alpha
{\rho}_\alpha Q^-_{\inv,\alpha} {\eta}_\alpha \: = \:
\sum_\alpha \left( {\rho}_\alpha
e^{-tD^+_{\inv,\alpha} D^-_{\inv,\alpha}} {\eta}_\alpha \: - \:
[D^+_{\inv,\alpha}, {\rho}_\alpha] Q^-_{\inv,\alpha} {\eta}_\alpha
\right)
\end{equation}
and
\begin{equation} \label{5.11}
I \: - \: \left( \sum_\alpha
{\rho}_\alpha Q^-_{\inv,\alpha} {\eta}_\alpha \right) D^+_{\inv}
\: = \:
\sum_\alpha \left( {\rho}_\alpha
e^{-tD^-_{\inv,\alpha} D^+_{\inv,\alpha}} {\eta}_\alpha \: + \:
{\rho}_\alpha Q^-_{\inv,\alpha} [D^+_{\inv,\alpha}, {\eta}_\alpha]
\right).
\end{equation}

Now
\begin{equation} \label{5.12}
\Index(D_{\inv}) \: = \: \Tr \left(
I \: - \: \left( \sum_\alpha
{\rho}_\alpha Q^-_{\inv,\alpha} {\eta}_\alpha \right) D^+_{\inv} \right)
\: - \:
\Tr \left(
I \: - \: D^+_{\inv} \sum_\alpha
{\rho}_\alpha Q^-_{\inv,\alpha} {\eta}_\alpha
\right)
\end{equation}
and
\begin{equation} \label{5.13}
- \: \Index(D_{\inv}) \: = \: \Tr \left(
I \: - \: \left( \sum_\alpha
{\rho}_\alpha Q^+_{\inv,\alpha} {\eta}_\alpha \right) D^-_{\inv} \right)
\: - \:
\Tr \left(
I \: - \: D^-_{\inv} \sum_\alpha
{\rho}_\alpha Q^+_{\inv,\alpha} {\eta}_\alpha
\right)
\end{equation}
Hence
\begin{align} \label{5.14}
\Index(D_{\inv}) \: = \: & \frac12 \Str \left(
I \: - \: \left( \sum_\alpha
{\rho}_\alpha Q_{\inv,\alpha} {\eta}_\alpha \right) D_{\inv} \right)
\: + \\
& \frac12 \Str \left(
I \: - \: D_{\inv} \sum_\alpha
{\rho}_\alpha Q_{\inv,\alpha} {\eta}_\alpha
\right). \notag
\end{align}
Equations (\ref{5.7})-(\ref{5.11}) now give
\begin{align} \label{5.15}
\Index(D_{\inv}) \: = \: & \sum_\alpha \Str
\left( {\rho}_\alpha
e^{-tD^2_{\inv,\alpha}} {\eta}_\alpha \right) \: + \:
\frac12 \: \sum_\alpha \Str \left(
{\rho}_\alpha Q_{\inv,\alpha} [D_{\inv,\alpha}, {\eta}_\alpha]
\right) \: - \: \\
& \frac12 \: \sum_\alpha \Str \left(
 [D_{\inv,\alpha}, {\rho}_\alpha] Q_{\inv,\alpha} {\eta}_\alpha
\right). \notag
\end{align}

By formal manipulations,
\begin{align} \label{5.16}
\Index(D_{\inv}) \: = \: & \sum_\alpha \Str
\left(
e^{-tD^2_{\inv,\alpha}} {\eta}_\alpha
{\rho}_\alpha  \right) \: + \:
\frac12 \: \sum_\alpha \Str \left(
Q_{\inv,\alpha} [D_{\inv,\alpha}, {\eta}_\alpha]
{\rho}_\alpha \right) \: + \: \\
& \frac12 \: \sum_\alpha \Str \left(
Q_{\inv,\alpha} {\eta}_\alpha  [D_{\inv,\alpha}, {\rho}_\alpha]
\right) \notag \\
\: = \: & \sum_\alpha \Str
\left( e^{-tD^2_{\inv,\alpha}} {\eta}_\alpha
{\rho}_\alpha  \right) \: + \:
\frac12 \: \sum_\alpha \Str \left(
Q_{\inv,\alpha} [D_{\inv,\alpha}, {\eta}_\alpha {\rho}_\alpha]
\right) \notag \\
\: = \: & \sum_\alpha \Str
\left(
e^{-tD^2_{\inv,\alpha}} {\eta}_\alpha \right) \: + \:
\frac12 \: \sum_\alpha \Str \left(
Q_{\inv,\alpha} [D_{\inv,\alpha}, {\eta}_\alpha]
\right). \notag
\end{align}
The last term in (\ref{5.16}) actually makes sense because
$\sum_\alpha d\eta_\alpha \: = \: 0$, so the
computation of
\begin{equation} \label{5.17}
\sum_\alpha \Str \left( Q_{\inv,\alpha} [D_{\inv,\alpha}, {\eta}_\alpha]
\right) \: = \:
\sum_\alpha \Str \left( Q_{\inv,\alpha} c(d{\eta}_\alpha)
\right)
\end{equation}
happens away from the diagonal.
To see this more clearly,  we can write
\begin{align} \label{5.18}
\sum_\alpha \Str \left( Q_{\inv,\alpha} [D_{\inv,\alpha}, {\eta}_\alpha]
\right) \: & = \:
\sum_{\alpha,\beta}
\Tr_s \left( Q_{\inv,\alpha} \eta_\beta\: [D_{\inv,\alpha}, \eta_\alpha] \right) \\
& = \:
\sum_{\alpha,\beta}
\Tr_s \left( (Q_{\inv,\alpha} - Q_{\inv,\beta} ) \eta_\beta\: [D_{\inv,\alpha}, \eta_\alpha] \right) \notag \\
& = \:
\sum_{\alpha,\beta}
\Tr_s \left( (Q_{\inv,\alpha} - Q_{\inv,\beta} ) \eta_\beta\: [D_{\inv,\alpha}, \eta_\alpha] \rho_\alpha \right) \notag \\
& = \:
\sum_{\alpha,\beta}
\Tr_s \left( \rho_\alpha (Q_{\inv,\alpha} - Q_{\inv,\beta} ) \eta_\beta\: [D_{\inv,\alpha}, \eta_\alpha] \right). \notag
\end{align}
The latter expression is clearly well-defined.

This proves the proposition.
\end{proof}

In what follows we will use the equation (\ref{5.3}) when, to justify things
more formally, one could use (\ref{5.4}) instead.

\subsection{Averaging over the Lie algebra} \label{subsection5.2}

Fix a Haar measure $d\mu_{\mathfrak g}$ on
${\mathfrak g} = \R^k$.
If $F \in C^\infty(\R^k)$ is a finite sum of periodic functions,
put
\begin{equation} \label{5.19}
AV_X F(X) = \lim_{R \rightarrow \infty} \frac{\int_{B(0,R)} F(X) \: d\mu_{\mathfrak g}(X)}{\int_{B(0,R)} 1 \: d\mu_{\mathfrak g}(X)}.
\end{equation}
Equivalently, if $\{L_j\}$ is a finite collection of lattices in
$\R^k$
and
\begin{equation} \label{5.20}
F(X) \: = \: \sum_j \sum_{v \in L_j} c_{j,v} \: e^{2\pi \sqrt{-1} v \cdot X}
\end{equation}
is a representation of $F$ as a finite sum of periodic functions then
$AV_X F(X) \: = \: \sum_j c_{j,0}$, the sum of the coefficients of $1$.

Given
$X \in \R^k$, we also let $X$ denote the corresponding vector field
on $Y_\alpha$.
Let $X^*$ denote the dual $1$-form and
let ${\mathcal L}_X$ denote Lie differentiation
with respect to $X$.
The moment $\mu(X)$ of $X \in \R^k$ is defined by
$\mu(X) = {\mathcal L}_X - \nabla_X$. It is a skew-adjoint endomorphism
of $TY_\alpha$.

\begin{proposition} \label{proposition6}
We have
\begin{equation} \label{5.21}
\sum_\alpha \Tr_s \left( e^{-t D_{\inv,\alpha}^2} \eta_\alpha \right)
 =  AV_X \sum_\alpha \Tr_s
\left( e^{-(t D_{\alpha}^2 + {\mathcal L}_X)}
\widetilde{\eta}_\alpha \right)
\end{equation}
and
\begin{equation} \label{5.22}
\sum_\alpha
\Tr_s \left( Q_{\inv,\alpha}
[D_{\inv,\alpha}, \eta_\alpha] \right) =
AV_X \sum_\alpha \int_0^t \Tr_s
\left( e^{-(sD_{\alpha}^2 + {\mathcal L}_X)} D_{\alpha}
[D_{\alpha}, \widetilde{\eta}_\alpha] \right) \: ds.
\end{equation}
\end{proposition}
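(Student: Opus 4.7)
My plan is to reduce the proposition, $\alpha$-summand by $\alpha$-summand, to two standard observations: (i) the trace of a $G_\alpha$-equivariant operator on the $G_\alpha$-invariant subspace equals the full-space trace composed with the averaging projection $P_{G_\alpha}$, and (ii) the mean $AV_X$ of a function on $\mathfrak{g}=\R^k$ that is periodic with respect to $\Lambda_\alpha := \ker(\exp\colon \mathfrak{g}\to G_\alpha)$ coincides with the integral, over $G_\alpha = \mathfrak{g}/\Lambda_\alpha$, of the descended function against normalized Haar measure. The bridge between the two is the identity $\exp(X)\cdot\xi = e^{-\mathcal{L}_X}\xi$ on sections, together with the fact that $\mathcal{L}_X$ commutes with every other ingredient in sight.

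For the first identity, I would begin by noting that $D_\alpha^2$ commutes with $P_{G_\alpha}$ by the $G_\alpha$-equivariance of $D_\alpha$, and that $\widetilde{\eta}_\alpha$ commutes with $P_{G_\alpha}$ by its $G_\alpha$-invariance. Hence $\Tr_s(e^{-tD_{\inv,\alpha}^2}\eta_\alpha) = \Tr_s(P_{G_\alpha}\, e^{-tD_\alpha^2}\widetilde{\eta}_\alpha)$, provided the invariant inner product built from the sheaf $\mathcal{S}_2$ coincides with the restriction of the $L^2(Y_\alpha)$ inner product; this is precisely arranged by the measure conventions in Subsection \ref{subsection5.1}, which normalize $\sigma_* d\mu_{B(V_\alpha)\times_{K_\alpha} G_\alpha}$ to equal $h_W\,d\mu_W$. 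Expanding $P_{G_\alpha}=\int_{G_\alpha} g\, d\mu_{G_\alpha}(g)$ with $d\mu_{G_\alpha}$ of unit mass and writing $g=\exp(X)$, so that $g$ acts on sections as $e^{-\mathcal{L}_X}$, the commutation relations $[\mathcal{L}_X, D_\alpha]=0$ and $\mathcal{L}_X\widetilde{\eta}_\alpha=0$ yield $g\circ e^{-tD_\alpha^2}\widetilde{\eta}_\alpha = e^{-(tD_\alpha^2 + \mathcal{L}_X)}\widetilde{\eta}_\alpha$.

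The integrand $F_\alpha(X) := \Tr_s(e^{-(tD_\alpha^2 + \mathcal{L}_X)}\widetilde{\eta}_\alpha)$ descends to a continuous function on $G_\alpha$: for $\lambda\in\Lambda_\alpha$ the operator $e^{-\mathcal{L}_\lambda}$ acts as the identity on sections of $\mathcal{E}_\alpha$, whence $F_\alpha(X+\lambda)=F_\alpha(X)$. A Fourier expansion then identifies $AV_X F_\alpha(X)$ with $\int_{G_\alpha} F_\alpha\, d\mu_{G_\alpha}$, completing the first identity after summing over $\alpha$ and using linearity of $AV_X$. The second identity follows by applying the same reasoning to the integrand of $Q_{\inv,\alpha}=\int_0^t e^{-sD_{\inv,\alpha}^2}D_{\inv,\alpha}\,ds$; the only additional commutator needed is $[\mathcal{L}_X, [D_\alpha, \widetilde{\eta}_\alpha]]=0$, immediate from the Jacobi identity and the two commutator identities already in use, and swapping $AV_X$ with the $s$-integration is justified by continuity of the integrand in $(s,X)$.

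The step I expect to require genuine care is the compatibility of Haar-measure normalizations. The measure $d\mu_{G_\alpha}$ appearing in the expansion of $P_{G_\alpha}$ must be the pushforward of $d\mu_{\mathfrak{g}}$ along $\mathfrak{g}\to\mathfrak{g}/\Lambda_\alpha$, normalized to unit mass, in order that $AV_X F_\alpha=\int_{G_\alpha}F_\alpha\, d\mu_{G_\alpha}$ hold on the nose rather than only up to a scalar. The construction in Subsection \ref{subsection5.1}, which chooses the lattice in $G_\alpha$ precisely so that $\sigma_* d\mu_{B(V_\alpha)\times_{K_\alpha}G_\alpha}$ coincides with $h_W\,d\mu_W$, is what ties these two normalizations together, so the identity works out without any further rescaling.
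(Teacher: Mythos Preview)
Your proposal is correct and follows essentially the same route as the paper: both arguments rest on the fact that the $G_\alpha$-invariant supertrace equals the Haar average of $\Tr_s(g\cdot e^{-tD_\alpha^2}\widetilde{\eta}_\alpha)$, that writing $g=e^{-X}$ turns this into a $\Lambda_\alpha$-periodic function of $X$, and that $AV_X$ recovers the constant Fourier coefficient, i.e.\ the Haar integral. The only cosmetic difference is that the paper phrases the middle step via Schwartz kernels, writing $\Tr_s(g\cdot e^{-tD_\alpha^2}\widetilde{\eta}_\alpha)=\int_{Y_\alpha}\trs(e^{-tD_\alpha^2}\widetilde{\eta}_\alpha)(p,pe^{-X})\,d\mu_{Y_\alpha}(p)$, whereas you use the projection $P_{G_\alpha}$ and the operator identity $g=e^{-\mathcal L_X}$ directly; your added care about the measure normalizations is a welcome elaboration of what the paper leaves implicit.
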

\begin{proof}
First,
\begin{equation} \label{5.23}
\int_{Y_\alpha}  \trs
\left( e^{-t D_{\alpha}^2} \widetilde{\eta}_\alpha \right)(p, p e^{-X}) \: d\mu_{Y_\alpha}(p)
\end{equation}
is a periodic function in $X$. From (\ref{5.19}),
\begin{align} \label{5.24}
\sum_\alpha \Tr_s \left( e^{-t D_{\inv,\alpha}^2} \eta_\alpha \right) & = AV_X \sum_\alpha
\int_{Y_\alpha}  \trs
\left( e^{-t D_{\alpha}^2} \widetilde{\eta}_\alpha \right)(p, p e^{-X}) \: d\mu_{Y_\alpha}(p) \\
& =  AV_X \sum_\alpha
\int_{Y_\alpha} \trs
\left( e^{-(t D_{\alpha}^2 + {\mathcal L}_X)} \widetilde{\eta}_\alpha \right)(p, p) \: d\mu_{Y_\alpha}(p) \notag \\
& =  AV_X \sum_\alpha \Tr_s
\left( e^{-(t D_{\alpha}^2 + {\mathcal L}_X)} \widetilde{\eta}_\alpha \right). \notag
\end{align}
Similarly,
\begin{align} \label{5.25}
& \sum_\alpha
\Tr_s \left( Q_{\inv,\alpha}
[D_{\inv,\alpha}, \eta_\alpha] \right) = \\
& AV_X \sum_\alpha \int_0^t
\int_{Y_\alpha} \trs
\left( e^{-sD_{\alpha}^2} D_{\alpha}
[D_{\alpha}, \widetilde{\eta}_\alpha] \right)(p, p e^{-X}) \: d\mu_{Y_\alpha}(p) \: ds = \notag \\
& AV_X \sum_\alpha \int_0^t
\int_{Y_\alpha} \trs
\left( e^{-(sD_{\alpha}^2 + {\mathcal L}_X)} D_{\alpha}
[D_{\alpha}, \widetilde{\eta}_\alpha] \right)(p, p) \: d\mu_{Y_\alpha}(p) \: ds = \notag \\
& AV_X \sum_\alpha \int_0^t \Tr_s
\left( e^{-(sD_{\alpha}^2 + {\mathcal L}_X)} D_{\alpha}
[D_{\alpha}, \widetilde{\eta}_\alpha] \right) \: ds. \notag
\end{align}
This proves the proposition.
\end{proof}

Note that ${\mathcal L}_X$ is a skew-adjoint operator.
For $t > 0$ and $\epsilon \in \C$, put
\begin{equation} \label{5.26}
D_{\alpha,t,\epsilon} = D_\alpha + \epsilon  \frac{c(X)}{4t}.
\end{equation}
As $c(X)$ is skew-adjoint, if $\epsilon$ is imaginary then
$D_{\alpha,t,\epsilon}$ is self-adjoint.
Put
\begin{align} \label{5.27}
F_{t,\epsilon}(X) = & \sum_\alpha
\Tr_s
\left( e^{-(t D_{\alpha,t,\epsilon}^2 + {\mathcal L}_X)}
\widetilde{\eta}_\alpha \right) + \\
& \frac12 \sum_\alpha \int_0^t \Tr_s
\left( e^{-(sD_{\alpha,t,\epsilon}^2 + {\mathcal L}_X)} D_{\alpha,t,\epsilon}
[D_{\alpha,t,\epsilon}, \widetilde{\eta}_\alpha] \right) \: ds. \notag
\end{align}
From Propositions \ref{proposition5} and \ref{proposition6},
\begin{equation} \label{5.28}
\Ind(D_{\inv}) = AV_X F_{t,0}(X).
\end{equation}

\begin{proposition} \label{proposition7}
$F_{t,0}(X)$ is independent of $t$.
\end{proposition}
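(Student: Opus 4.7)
The plan is to show that $\frac{d}{dt} F_{t,0}(X) = 0$, and in fact that the cancellation happens summand-by-summand in $\alpha$. Because each $Y_\alpha$ is a closed manifold, $e^{-sD_\alpha^2}$ is trace class for $s>0$, and because $X \in \mathfrak g$ acts on $Y_\alpha$ by a Killing vector field that lifts to the $G_\alpha$-equivariant Clifford module $\mathcal E_\alpha$, the Lie derivative $\mathcal L_X$ is a first-order operator that commutes with $D_\alpha$. In particular $D_\alpha$ commutes with $A := tD_\alpha^2 + \mathcal L_X$ and hence with $e^{-A}$, and every individual supertrace below is well-defined.

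Differentiating the first summand gives
\begin{equation*}
\tfrac{d}{dt}\Tr_s\!\left(e^{-A}\widetilde\eta_\alpha\right) \;=\; -\,\Tr_s\!\left(D_\alpha^2\,e^{-A}\,\widetilde\eta_\alpha\right),
\end{equation*}
while the fundamental theorem of calculus applied to the second summand gives
\begin{equation*}
\tfrac{d}{dt}\,\tfrac12\!\int_0^t \Tr_s\!\left(e^{-(sD_\alpha^2 + \mathcal L_X)}\,D_\alpha[D_\alpha,\widetilde\eta_\alpha]\right)ds \;=\; \tfrac12\,\Tr_s\!\left(e^{-A}\,D_\alpha[D_\alpha,\widetilde\eta_\alpha]\right).
\end{equation*}
So it suffices to establish the identity
\begin{equation*}
\Tr_s\!\left(D_\alpha^2\,e^{-A}\,\widetilde\eta_\alpha\right) \;=\; \tfrac12\,\Tr_s\!\left(e^{-A}\,D_\alpha[D_\alpha,\widetilde\eta_\alpha]\right).
\end{equation*}
Write the left-hand side as $\Tr_s(D_\alpha\cdot D_\alpha e^{-A}\widetilde\eta_\alpha)$. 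Both factors are odd for the $\Z_2$-grading, and $e^{-A}$ and $\widetilde\eta_\alpha$ are even, so by super-cyclicity of the supertrace applied to a product of two odd operators,
\begin{equation*}
\Tr_s\!\left(D_\alpha\cdot D_\alpha e^{-A}\widetilde\eta_\alpha\right) \;=\; -\,\Tr_s\!\left(D_\alpha e^{-A}\widetilde\eta_\alpha\cdot D_\alpha\right).
\end{equation*}
Expanding $\widetilde\eta_\alpha D_\alpha = D_\alpha \widetilde\eta_\alpha - [D_\alpha,\widetilde\eta_\alpha]$ and using that $D_\alpha$ commutes with $e^{-A}$ yields $\Tr_s(D_\alpha^2 e^{-A}\widetilde\eta_\alpha) = -\Tr_s(D_\alpha^2 e^{-A}\widetilde\eta_\alpha) + \Tr_s(e^{-A} D_\alpha[D_\alpha,\widetilde\eta_\alpha])$, and solving for the left-hand side gives the required factor of $\tfrac12$.

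The only points that require care are the standard Duhamel/semigroup justifications for differentiating $\Tr_s(e^{-A}\widetilde\eta_\alpha)$ in $t$, and the assertion that super-cyclicity applies to the relevant operators; both are routine on the closed manifold $Y_\alpha$ because $e^{-sD_\alpha^2}$ is smoothing and $\mathcal L_X$ is a skew-adjoint first-order operator whose commutator with $D_\alpha$ vanishes. The conceptual heart of the proof is therefore the simple super-cyclicity identity above; I do not anticipate a substantive obstacle.
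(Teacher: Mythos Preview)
Your proof is correct and follows the same approach as the paper: differentiate $F_{t,0}(X)$ in $t$ and show the two resulting terms cancel. In fact you supply more detail than the paper does; the paper simply writes down the derivative (your two displayed terms) and asserts the sum vanishes, whereas you spell out the super-cyclicity argument that produces the factor $\tfrac12$ and hence the cancellation.
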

\begin{proof}
We have
\begin{align} \label{5.29}
F_{t,0}(X) = & \sum_\alpha
\Tr_s
\left( e^{-(t D_{\alpha}^2 + {\mathcal L}_X)}
\widetilde{\eta}_\alpha \right) + \\
& \frac12 \sum_\alpha \int_0^t \Tr_s
\left( e^{-(sD_{\alpha}^2 + {\mathcal L}_X)} D_{\alpha}
[D_{\alpha}, \widetilde{\eta}_\alpha] \right) \: ds. \notag
\end{align}
Then
\begin{align} \label{5.30}
\frac{d}{dt} F_{t,0}(X) = & \sum_\alpha
- \Tr_s
\left( D_{\alpha}^2 \: e^{-(t D_{\alpha}^2 + {\mathcal L}_X)}
\widetilde{\eta}_\alpha \right) + \\
& \frac12 \sum_\alpha \Tr_s
\left( e^{-(tD_{\alpha}^2 + {\mathcal L}_X)} D_{\alpha}
[D_{\alpha}, \widetilde{\eta}_\alpha] \right) \: = \: 0. \notag
\end{align}
The proposition follows.
\end{proof}

\begin{proposition} \label{proposition8}
$F_{t,\epsilon}(X)$ is independent of $\epsilon$.
\end{proposition}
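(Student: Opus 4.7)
I would mirror the strategy of Proposition \ref{proposition7}: compute $\frac{\partial}{\partial\epsilon} F_{t,\epsilon}(X)$ and show it vanishes. Write $B = D_{\alpha,t,\epsilon}$, $\delta = \partial_\epsilon B = c(X)/(4t)$, and $H_s = sB^2 + \mathcal{L}_X$. Three structural facts drive the computation: (i) $[B, H_s] = 0$, since $D_\alpha$ is $G_\alpha$-equivariant and $X$ is Killing (so $[\mathcal{L}_X, c(X)] = c(\mathcal{L}_X X) = 0$); (ii) $\partial_\epsilon B^2 = \{B, \delta\}$ by direct computation; and (iii) $[B, \widetilde{\eta}_\alpha] = c(d\widetilde{\eta}_\alpha)$ while $[\delta, \widetilde{\eta}_\alpha] = 0$, since $\delta$ is a bundle endomorphism.

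By Duhamel's formula combined with (i), $\partial_\epsilon e^{-H_s} = -s\{B, J_s\}$, where $J_s = \int_0^1 e^{-rH_s}\,\delta\,e^{-(1-r)H_s}\,dr$ is an odd operator (since $\delta$ is odd and $H_s$ is even). Differentiating the first summand of $F_{t,\epsilon}$ gives $-t\,\Tr_s\!\left(\{B, J_t\}\widetilde{\eta}_\alpha\right)$. Expanding the anticommutator and applying the $\Z_2$-graded cyclicity $\Tr_s(MN) = (-1)^{|M||N|}\Tr_s(NM)$, together with (iii) to produce the discrepancy when $B$ is commuted past $\widetilde{\eta}_\alpha$, reduces this to $-t\,\Tr_s(J_t\,c(d\widetilde{\eta}_\alpha))$.

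Differentiating the second summand produces two contributions: one from $\partial_\epsilon e^{-H_s}$, giving $-\frac{s}{2}\Tr_s\!\left(\{B, J_s\}\,B\,c(d\widetilde{\eta}_\alpha)\right)$; and one from $\partial_\epsilon(Bc(d\widetilde{\eta}_\alpha)) = \delta\, c(d\widetilde{\eta}_\alpha)$, giving $\frac{1}{2}\Tr_s\!\left(e^{-H_s}\,\delta\, c(d\widetilde{\eta}_\alpha)\right)$. Using (i) to commute $B$ past the exponentials inside $J_s$ and invoking graded cyclicity once more, the first of these can be recast as (half of) $\partial_s \Tr_s(J_s\, c(d\widetilde{\eta}_\alpha))$, reassembling the full $\partial_\epsilon F_{t,\epsilon}$ as a total $s$-derivative over $[0,t]$. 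The boundary at $s=t$ cancels the contribution from the first summand identified in the previous paragraph, while at $s=0$ one has $J_0 = \delta$ and $\Tr_s(\delta\, c(d\widetilde{\eta}_\alpha)) = 0$ since $\delta\, c(d\widetilde{\eta}_\alpha) = -|X||d\widetilde{\eta}_\alpha|\cdot(\text{odd endomorphism})$ (more precisely, the product of two odd Clifford elements yields an operator whose pointwise supertrace vanishes by the standard parity count on $Y_\alpha$).

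\textbf{Main obstacle.} The delicate step is the combinatorial bookkeeping of signs in the $\Z_2$-graded cyclicity and the correct identification of the total $\partial_s$-derivative, since $B$, $\delta$, and $c(d\widetilde{\eta}_\alpha)$ are all odd while $H_s$, $\widetilde{\eta}_\alpha$, and the anticommutators $\{B, \delta\}$ and $\{B, J_s\}$ are even. This is structurally analogous to the transgression arguments in Chapter 8 of \cite{Berline-Getzler-Vergne (1992)}, where Kirillov-type deformations of the Dirac operator are shown to leave the equivariant index unchanged.
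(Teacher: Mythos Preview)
Your direct-differentiation approach is different from the paper's, and the sketch has a real gap. The paper does not differentiate the original formula for $F_{t,\epsilon}$; instead it first establishes the \emph{operator identity}
\[
e^{-(tD_{\alpha,t,\epsilon}^2+\mathcal{L}_X)}\widetilde{\eta}_\alpha
+\tfrac12\int_0^t e^{-(sD_{\alpha,t,\epsilon}^2+\mathcal{L}_X)}D_{\alpha,t,\epsilon}[D_{\alpha,t,\epsilon},\widetilde{\eta}_\alpha]\,ds
= e^{-\mathcal{L}_X}\widetilde{\eta}_\alpha
-\tfrac12\int_0^t[D_{\alpha,t,\epsilon},\,e^{-(sD_{\alpha,t,\epsilon}^2+\mathcal{L}_X)}D_{\alpha,t,\epsilon}\widetilde{\eta}_\alpha]_+\,ds,
\]
obtained by writing $B[B,\widetilde{\eta}_\alpha]=2B^2\widetilde{\eta}_\alpha-[B,B\widetilde{\eta}_\alpha]_+$ and integrating $-\partial_s e^{-H_s}=B^2e^{-H_s}$. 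Now the entire $\epsilon$-dependence sits inside an anticommutator $[B,\text{odd}]_+$; differentiating in $\epsilon$ yields two terms, each of the form $\Tr_s[A,C]_+$ with $A$ odd and $C$ odd trace-class, which vanishes by graded cyclicity. No Duhamel bookkeeping, no total-$s$-derivative, no boundary evaluation is needed.

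In your outline the step ``the first of these can be recast as (half of) $\partial_s\Tr_s(J_s c(d\widetilde{\eta}_\alpha))$'' is asserted but not checked, and I do not see that it holds as stated: $\partial_s J_s$ involves $\int_0^1 e^{-rH_s}[rB^2\delta+(1-r)\delta B^2]e^{-(1-r)H_s}dr$, and since $[\delta,B^2]\ne 0$ in general this does not collapse to the expression $-s\{B,J_s\}B$ you need. More seriously, your $s=0$ boundary term is ill-defined. You write $J_0=\delta$, but in fact $J_0=e^{-\mathcal{L}_X}\delta$; either way, $e^{-\mathcal{L}_X}\delta\,c(d\widetilde{\eta}_\alpha)$ is a unitary composed with a bounded multiplication operator and is \emph{not} trace-class, so $\Tr_s(J_0\,c(d\widetilde{\eta}_\alpha))$ does not make sense. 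The ``pointwise supertrace of two Clifford elements vanishes'' argument addresses a different quantity. The paper's rewriting sidesteps this precisely because the $\epsilon$-independent piece $e^{-\mathcal{L}_X}\widetilde{\eta}_\alpha$ never needs to be traced on its own: it appears only inside the trace-class combination on the left-hand side of the identity, and its $\epsilon$-derivative is zero as an operator before any trace is taken.
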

\begin{proof}
Let $[\cdot, \cdot]_+$ denote the anticommutator of two operators.
We have an identity of operators on $L^2(Y_\alpha, {\mathcal E}_{Y_\alpha})$ :
\begin{align} \label{5.31}
& e^{-(t D_{\alpha,t,\epsilon}^2 + {\mathcal L}_X)} \widetilde{\eta}_\alpha +
\frac12  \int_0^t
e^{-(sD_{\alpha,t,\epsilon}^2 + {\mathcal L}_X)} D_{\alpha,t,\epsilon}
[D_{\alpha,t,\epsilon}, \widetilde{\eta}_\alpha] \: ds \: = \\
& e^{-(t D_{\alpha,t,\epsilon}^2 + {\mathcal L}_X)} \widetilde{\eta}_\alpha +
\int_0^t
e^{-(sD_{\alpha,t,\epsilon}^2 + {\mathcal L}_X)} D_{\alpha,t,\epsilon}^2
\: \widetilde{\eta}_\alpha \: ds \:  - \notag \\
& \frac12 \int_0^t
[D_{\alpha,t,\epsilon}, e^{-(sD_{\alpha,t,\epsilon}^2 + {\mathcal L}_X)} D_{\alpha,t,\epsilon} \widetilde{\eta}_\alpha]_+ \: ds \: = \notag \\
& e^{-(t D_{\alpha,t,\epsilon}^2 + {\mathcal L}_X)} \widetilde{\eta}_\alpha -
\int_0^t \frac{d}{ds}
e^{-(sD_{\alpha,t,\epsilon}^2 + {\mathcal L}_X)}
\: \widetilde{\eta}_\alpha \: ds \:  - \notag \\
& \frac12 \int_0^t
[D_{\alpha,t,\epsilon}, e^{-(sD_{\alpha,t,\epsilon}^2 + {\mathcal L}_X)} D_{\alpha,t,\epsilon} \widetilde{\eta}_\alpha]_+ \: ds \: = \notag \\
& e^{-{\mathcal L}_X} \widetilde{\eta}_\alpha - \frac12 \int_0^t
[D_{\alpha,t,\epsilon}, e^{-(sD_{\alpha,t,\epsilon}^2 + {\mathcal L}_X)} D_{\alpha,t,\epsilon} \widetilde{\eta}_\alpha]_+ \: ds. \notag
\end{align}
Then
\begin{equation} \label{5.32}
F_{t,\epsilon}(X) \: = \:
\sum_\alpha \Tr_s \left(
e^{-{\mathcal L}_X} \widetilde{\eta}_\alpha - \frac12 \int_0^t
[D_{\alpha,t,\epsilon}, e^{-(sD_{\alpha,t,\epsilon}^2 + {\mathcal L}_X)} D_{\alpha,t,\epsilon} \widetilde{\eta}_\alpha]_+ \: ds
\right).
\end{equation}
In particular,
\begin{align} \label{5.33}
\frac{d}{d\epsilon} F_{t,\epsilon}(X) \: = \: & - \frac12
\sum_\alpha \Tr_s \left[ \frac{d}{d\epsilon} D_{\alpha,t,\epsilon}, \int_0^t
e^{-(sD_{\alpha,t,\epsilon}^2 + {\mathcal L}_X)} D_{\alpha,t,\epsilon} \widetilde{\eta}_\alpha \: ds
\right]_+ \: - \\
& \frac12
\sum_\alpha \Tr_s \left[ D_{\alpha,t,\epsilon}, \frac{d}{d\epsilon} \left( \int_0^t
e^{-(sD_{\alpha,t,\epsilon}^2 + {\mathcal L}_X)} D_{\alpha,t,\epsilon} \widetilde{\eta}_\alpha \: ds
\right)
\right]_+ \notag \\
\: = \: & 0. \notag
\end{align}
The proposition follows.
\end{proof}

\begin{corollary} \label{corollary10}
$F_{t,\epsilon}(X)$ is independent of $t$ and $\epsilon$.
\end{corollary}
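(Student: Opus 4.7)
The plan is to combine Propositions \ref{proposition7} and \ref{proposition8} in a single short argument: the two propositions together cover the two independent directions of variation, and the independence statement is just the logical conjunction.

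More precisely, I would first apply Proposition \ref{proposition8} to conclude that, for each fixed $t > 0$ and each $X \in \R^k$, one has $F_{t,\epsilon}(X) = F_{t,0}(X)$ for every admissible $\epsilon \in \C$. Then I would apply Proposition \ref{proposition7} to conclude that the right-hand side $F_{t,0}(X)$ does not depend on $t$. Chaining the two equalities
\begin{equation*}
F_{t,\epsilon}(X) \; = \; F_{t,0}(X) \; = \; F_{t_0,0}(X)
\end{equation*}
for any fixed reference value $t_0 > 0$ yields the corollary.

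There is essentially no obstacle here, since the hard analytic work (the interchange of derivatives with traces, the use of Duhamel's formula, and the anticommutator identity in the proof of Proposition \ref{proposition8}) has already been carried out. The only mild point of care is to ensure that the domains of $\epsilon$ on which Proposition \ref{proposition8} was proved include both $\epsilon = 0$ and the values of $\epsilon$ one wants to use later (in Subsection \ref{subsection5.3}, where the $t \to 0$ limit will be computed after a Getzler-type rescaling); this is automatic from the way $D_{\alpha,t,\epsilon}$ is defined in (\ref{5.26}) as an entire family in $\epsilon$, with self-adjointness for imaginary $\epsilon$ guaranteeing convergence of all the trace expressions.
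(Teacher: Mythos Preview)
Your proposal is correct and is essentially identical to the paper's own proof, which simply states that the corollary follows from Propositions \ref{proposition7} and \ref{proposition8}. Your chain $F_{t,\epsilon}(X) = F_{t,0}(X) = F_{t_0,0}(X)$ is exactly the intended combination of those two propositions.
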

\begin{proof}
This follows from Propositions \ref{proposition7} and \ref{proposition8}.
\end{proof}

\begin{proposition} \label{proposition9}
$F_{t,2}(X)$ has a holomorphic extension to $X \in \C^k$.
\end{proposition}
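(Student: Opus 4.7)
The strategy exploits a Kirillov-type cancellation valid at $\epsilon = 2$. First one establishes the Clifford identity
\[
\{D_\alpha, c(X)\} \: = \: -2 {\mathcal L}_X + R(X),
\]
where $R(X) = \sum_i c(e_i) c(\nabla_{e_i} X) + 2 \mu(X)$ is a zeroth-order endomorphism linear in $X$; this follows from $[\nabla_V, c(W)] = c(\nabla_V W)$ and the skew-symmetry of $\nabla X$ for the Killing vector field $X$. A direct expansion of $D_{\alpha, t, 2}^2 = (D_\alpha + c(X)/(2t))^2$ then gives
\[
s D_{\alpha, t, 2}^2 + {\mathcal L}_X \: = \: s D_\alpha^2 + (1 - s/t) {\mathcal L}_X + (s/t) Q(X), \qquad Q(X) := \frac{R(X)}{2} + \frac{c(X)^2}{4t},
\]
with $Q(X)$ a bounded endomorphism polynomial in $X$. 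At $s = t$ the Lie derivative term vanishes, so the first piece of \eqref{5.27} reduces to $\sum_\alpha \Tr_s\bigl(e^{-(t D_\alpha^2 + Q(X))} \widetilde{\eta}_\alpha\bigr)$, manifestly entire in $X \in \C^k$ via Duhamel expansion of the semigroup of an elliptic self-adjoint operator perturbed by a holomorphic family of bounded endomorphisms.

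For the integral over $s$ in \eqref{5.27} the factor $(1-s/t){\mathcal L}_X$ persists, but since $G = T^k$ is abelian and $X$ is $G$-invariant, the operators $D_\alpha$, $c(X)$, $R(X)$, $\mu(X)$, and $\widetilde{\eta}_\alpha$ all commute with ${\mathcal L}_X$, giving the factorization
\[
e^{-(s D_{\alpha, t, 2}^2 + {\mathcal L}_X)} \: = \: e^{-(1 - s/t) {\mathcal L}_X} \cdot e^{-(s D_\alpha^2 + (s/t) Q(X))}.
\]
Decomposing $L^2(Y_\alpha; {\mathcal E}_{Y_\alpha}) = \bigoplus_{\lambda \in \widehat{T^k}} V_\lambda$ into $T^k$-isotypic components, the factor $e^{-(1-s/t){\mathcal L}_X}$ acts on $V_\lambda$ as the scalar $e^{-i(1 - s/t)\lambda(X)}$, and $G$-equivariance of the remaining operators reduces the supertrace to a Fourier-type series
\[
\sum_{\lambda \in \widehat{T^k}} e^{-i(1 - s/t) \lambda(X)} \: c_\lambda(s, X),
\]
whose coefficients $c_\lambda(s, X)$ are holomorphic in $X$ for each fixed $s \in (0, t]$.

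To conclude one establishes locally uniform absolute convergence of this series on $\C^k$. The Casimir $-\sum_i {\mathcal L}_{e_i}^2$ of $T^k$ acts as $\|\lambda\|^2$ on $V_\lambda$, and comparison of principal symbols yields the Garding-type estimate $D_\alpha^2 \geq \|\lambda\|^2 - C$ on $V_\lambda$. Combined with standard trace-norm bounds on the remaining operator factors, this gives Gaussian decay $|c_\lambda(s, X)| \leq C'_s (1 + \|\lambda\|)^N e^{- s \|\lambda\|^2 / 2}$, locally uniformly in $X$; this decay dominates the at-most-exponential phase growth $|e^{-i(1-s/t)\lambda(X)}| \leq e^{|\Im X| \cdot \|\lambda\|}$, so the series sums to a holomorphic function. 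Integrating in $s$ with Duhamel control of the $s \to 0$ endpoint, and adding the entire first term of \eqref{5.27} summed over the finite collection $\{\alpha\}$, yields the desired holomorphic extension of $F_{t, 2}(X)$.

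The main obstacle is the interplay between the phase growth $e^{|\Im X| \|\lambda\|}$ and the Gaussian decay $e^{-s\|\lambda\|^2/2}$ as $s$ approaches zero: the Gaussian weight degenerates while the phase growth does not. Carrying out the Fourier sum and the $s$-integral together, with estimates uniform for $s$ near zero and $X$ in compact subsets of $\C^k$, is the essential analytic content of the proof.
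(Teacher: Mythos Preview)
Your handling of the first summand of \eqref{5.27} is correct and is essentially the paper's argument: at $s=t$ your identity collapses $tD_{\alpha,t,2}^2+\mathcal L_X$ to $tD_\alpha^2$ plus a bounded endomorphism polynomial in $X$, whence a Duhamel expansion produces an entire function. This is exactly the content of the paper's formula \eqref{explicit}.

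The gap is in the $s$-integral. The difficulty you name in your last paragraph is genuine and you do not resolve it. Your bound $|c_\lambda(s,X)|\le C'_s(1+\|\lambda\|)^N e^{-s\|\lambda\|^2/2}$ carries an $s$-dependent constant which, by Weyl asymptotics for the trace norm of $e^{-sD_\alpha^2}$, blows up like a negative power of $s$; after integrating in $s$ the Gaussian in $\|\lambda\|$ is traded for at best polynomial decay, which cannot absorb the exponential phase $e^{|\Im X|\,\|\lambda\|}$. Indeed for a \emph{single} $\alpha$ the operator $\int_0^t e^{-(sD_{\alpha,t,2}^2+\mathcal L_X)}D_{\alpha,t,2}\,ds$ is only a pseudodifferential operator of order $-1$, and its product with $c(d\widetilde\eta_\alpha)$ is not trace-class: the expression acquires meaning only after summing over $\alpha$ via the cancellation $\sum_\alpha d\widetilde\eta_\alpha=0$ encoded in \eqref{5.4} and \eqref{5.18}. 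Your isotypic decomposition is carried out chart by chart, with characters $\lambda$ indexed by the generally different lattices of the tori $G_\alpha$, so this cross-chart cancellation is invisible in your Fourier picture and cannot rescue the estimate.

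The paper avoids the isotypic route. Having recorded \eqref{explicit}, it expands the full right-hand side of \eqref{Duhamel} as a Duhamel series about $e^{-sD_\alpha^2}$, in which every perturbation is either bounded and polynomial in $X$, or---via the very identity $\mathcal L_X=-\tfrac12[D_\alpha,c(X)]_{\text{graded}}+\text{bounded}$ that you derived---of graded-commutator type with a bounded argument linear in $X$. The estimates of \cite[Lemma~2.1]{Getzler-Szenes (1989)} are designed for precisely such simplex integrals and yield convergence to a holomorphic function of $X$ directly, without separating out an $e^{-(1-s/t)\mathcal L_X}$ factor.
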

\begin{proof}
One finds
\begin{equation} \label{explicit}
t \: D_{\alpha,t,2}^2 \: + \: {\mathcal L}_X \: = \:
t D_\alpha^2 + \mu(X) + \frac12 c \left( dX^* \right) - \frac{X^2}{4t}.
\end{equation}
Writing
\begin{align} \label{Duhamel}
F_{t,2}(X) = & \sum_\alpha
\Tr_s
\left( e^{-(t D_{\alpha,t,2}^2 + {\mathcal L}_X)}
\widetilde{\eta}_\alpha \right) + \\
& \frac12 \sum_\alpha \int_0^t \Tr_s
\left( e^{-(sD_{\alpha,t,2}^2 + {\mathcal L}_X)} D_{\alpha,t,2}
[D_{\alpha,t,2}, \widetilde{\eta}_\alpha] \right) \: ds, \notag
\end{align}
and using (\ref{explicit}), we expand the right-hand side of
(\ref{Duhamel}) by means of a Duhamel expansion.
The estimates of \cite[Lemma 2.1]{Getzler-Szenes (1989)} show
that the ensuing series defines a holomorphic function of $X \in \C^k$.
\end{proof}

As a consequence of Corollary \ref{corollary10} and Proposition \ref{proposition9},
for any $t > 0$ and $\epsilon \in \C$,
$F_{t,\epsilon}(X)$ has a holomorphic extension to $X \in \C^k$.

\subsection{Short-time delocalized limit} \label{subsection5.3}

Let $\widehat{A}(X, Y_\alpha) \: \ch(X, {\mathcal E}_\alpha/S)
\in \Omega^*(Y_\alpha)$
be the equivariant characteristic form defined in
\cite[Chapter 8.1]{Berline-Getzler-Vergne (1992)}.
Notationally,
\begin{equation}
\widehat{A}(X, Y_\alpha) = \sqrt{ \det \left(
\frac{
R_{\mathfrak g}(X)/2
}{
\sinh \left( R_{\mathfrak g}(X)/2 \right)
}
\right)
},
\end{equation}
with $R_{\mathfrak g}(X) = R + \mu(X)$,
and
\begin{equation}
\ch(X, {\mathcal E}_\alpha/S) = \tr_{{\mathcal E}_\alpha/S}
\left( e^{- \: F_{\mathfrak g}^{{\mathcal E}_\alpha/S}(X)} \right).
\end{equation}
Note that $\widehat{A}(X, Y_\alpha) \: \ch(X, {\mathcal E}_\alpha/S)$
has an analytic extension to $\C^k$ which is regular in a neighborhood
of $0$, and on the complement of $\R^k$.

\begin{proposition} \label{proposition10}
If $X \in \R^k$ then
\begin{equation} \label{5.34}
\lim_{t \rightarrow 0} F_{t,1}(iX) = \sum_\alpha \int_{Y_\alpha}
\widehat{A}(iX, Y_\alpha) \: \ch(iX, {\mathcal E}_\alpha/S) \:
\widetilde{\eta}_\alpha.
\end{equation}
\end{proposition}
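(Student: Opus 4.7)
The strategy rests on two ideas: first, exploit Corollary \ref{corollary10} to reduce to the tractable case $\epsilon = 0$; second, invoke the Berline-Getzler-Vergne delocalized short-time heat-kernel asymptotics (Kirillov formula) on each compact manifold $Y_\alpha$.

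\textbf{Step 1 (Reduction to $\epsilon = 0$).} By Corollary \ref{corollary10} we have $F_{t,1}(iX) = F_{t,0}(iX)$, so $\lim_{t \to 0} F_{t,1}(iX) = \lim_{t \to 0} F_{t,0}(iX)$. At $\epsilon = 0$ the singular correction $c(iX)/(4t)$ in $D_{\alpha,t,\epsilon}$ disappears and $D_{\alpha,t,0} = D_\alpha$, giving
\begin{equation*}
F_{t,0}(iX) \: = \: \sum_\alpha \Tr_s \left( e^{-(tD_\alpha^2 + {\mathcal L}_{iX})} \widetilde{\eta}_\alpha \right) \: + \: \frac{1}{2} \sum_\alpha \int_0^t \Tr_s \left( e^{-(sD_\alpha^2 + {\mathcal L}_{iX})} D_\alpha \: c(d\widetilde{\eta}_\alpha) \right) \: ds,
\end{equation*}
where we used that $[D_\alpha, \widetilde{\eta}_\alpha] = c(d\widetilde{\eta}_\alpha)$.

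\textbf{Step 2 (The Duhamel term is $O(t)$).} Since $c(d\widetilde{\eta}_\alpha)$ is a bounded zero-th order multiplication operator, the heat-operator estimates of \cite[Lemma 2.1]{Getzler-Szenes (1989)} (now applied to $H_s = sD_\alpha^2 + {\mathcal L}_{iX}$ using holomorphy in $X$ just as in the proof of Proposition \ref{proposition9}) yield a uniform bound $|\Tr_s(e^{-(sD_\alpha^2 + {\mathcal L}_{iX})} D_\alpha \: c(d\widetilde{\eta}_\alpha))| \le C$ on $s \in (0, t_0]$ for any fixed $t_0 > 0$. Hence the integral $\int_0^t \ldots \: ds = O(t)$ and vanishes as $t \to 0$.

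\textbf{Step 3 (Pointwise Getzler limit for the leading term).} The remaining task is to evaluate
\begin{equation*}
\lim_{t \to 0} \sum_\alpha \Tr_s \left( e^{-(tD_\alpha^2 + {\mathcal L}_{iX})} \widetilde{\eta}_\alpha \right) \: = \: \sum_\alpha \int_{Y_\alpha} \lim_{t \to 0} \str \left( e^{-(tD_\alpha^2 + {\mathcal L}_{iX})}(p, p) \right) \widetilde{\eta}_\alpha(p) \: d\mu_{Y_\alpha}(p).
\end{equation*}
This is exactly the setting of the equivariant local index theorem of Berline-Getzler-Vergne \cite[Theorem 8.2]{Berline-Getzler-Vergne (1992)}. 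Applying Getzler's rescaling of the Clifford algebra and using the Lichnerowicz-Kostant formula to rewrite $tD_\alpha^2 + {\mathcal L}_{iX}$ near the diagonal produces, in the limit $t \to 0$, the Mehler-type kernel whose local supertrace is the top-degree component of the equivariant form $\widehat{A}(iX, Y_\alpha) \: \ch(iX, {\mathcal E}_\alpha/S)$. Integrating against $\widetilde{\eta}_\alpha$ and summing over $\alpha$ gives the claimed formula.

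\textbf{Main obstacle.} The technical heart is Step 3: the pointwise delocalized short-time limit is {\em not} concentrated on the fixed-point set of $X$, and one must keep careful track of how the terms $\mu(X)$, $\frac{1}{2}c(dX^*)$ and $-\frac{|X|^2}{4t}$ that appear in the analog of (\ref{explicit}) (for $\epsilon = 0$, though absent, they reappear through the Lichnerowicz/Kostant rewriting of ${\mathcal L}_{iX}$) combine under Getzler rescaling. The shift $X \to iX$ is precisely what makes $\widehat{A}(iX, Y_\alpha) \: \ch(iX, {\mathcal E}_\alpha/S)$ regular, since $R + i\mu(X)$ avoids the zeros of $\sinh$ for small $X$, so the formal power series obtained from Getzler's method converges and gives the stated local expression.
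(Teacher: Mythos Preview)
Your reduction in Step~1 is formally valid, but the term-by-term $t\to 0$ analysis at $\epsilon=0$ does not go through, and this is precisely why the paper works at $\epsilon=1$ (i.e.\ with the Bismut correction $D_{\alpha,t,i}=D_\alpha+i\,c(X)/(4t)$) rather than at $\epsilon=0$.

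\textbf{Step 3 fails.} The result you invoke from \cite[Chapter 8]{Berline-Getzler-Vergne (1992)} (and equally Bismut's \cite{Bismut (1985)}) is a statement about the \emph{Bismut Laplacian}, not about $tD_\alpha^2+\mathcal{L}_{iX}$. The whole point of the correction $c(X)/(4t)$ is that after squaring and adding $\mathcal{L}_X$, the first-order piece $\nabla_X$ coming from $\mathcal{L}_X=\nabla_X+\mu(X)$ cancels (cf.\ the paper's formula (\ref{explicit}) at $\epsilon=2$), leaving an operator of the form $tD_\alpha^2+\mu(X)+\tfrac12 c(dX^*)+\text{(potential)}$ to which Getzler rescaling applies and yields the Mehler kernel. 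At $\epsilon=0$ there is no such cancellation: $tD_\alpha^2+\mathcal{L}_X=tD_\alpha^2+\nabla_X+\mu(X)$ retains the first-order term $\nabla_X$, and Getzler rescaling does not produce the Kirillov integrand on all of $Y_\alpha$. Concretely, for real $X$ one has $e^{-(tD_\alpha^2+\mathcal{L}_X)}=e^{-tD_\alpha^2}\cdot e^{-X}$, whose diagonal supertrace localizes as $t\to 0$ to the \emph{fixed-point set} of $e^{-X}$ (the Lefschetz density), not to $\widehat{A}(X,Y_\alpha)\,\ch(X,\mathcal{E}_\alpha/S)$ integrated over $Y_\alpha$. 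Your parenthetical that the missing terms ``reappear through the Lichnerowicz/Kostant rewriting of $\mathcal{L}_{iX}$'' is simply not correct.

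\textbf{Step 2 is also unjustified.} The Getzler--Szenes estimates you cite do not give a uniform bound on $\Tr_s\big(e^{-(sD_\alpha^2+i\mathcal{L}_X)}D_\alpha\,c(d\widetilde\eta_\alpha)\big)$ for $s\in(0,t_0]$; indeed $e^{-i\mathcal{L}_X}$ is unbounded and the composite operator norm blows up as $s\to 0$. The paper does \emph{not} bound each $\alpha$-term separately. Instead it exploits $\sum_\alpha d\eta_\alpha=0$ to rewrite the sum as $\sum_{\alpha,\beta}$ of \emph{differences} of local heat kernels (equation (\ref{5.41})), and then uses that $D_\alpha$ and $D_\beta$ agree near the relevant points together with finite propagation speed \cite{Cheeger-Gromov-Taylor (1982)} to obtain rapid decay. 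This cancellation mechanism is the essential content of the remainder estimate and is absent from your argument.
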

\begin{proof}
We can write
\begin{align} \label{5.35}
F_{t,1}(iX) = & \sum_\alpha
\Tr_s
\left( e^{-(t D_{\alpha,t,i}^2 + i{\mathcal L}_X)} \widetilde{\eta}_\alpha \right) + \\
& \frac12 \sum_\alpha \int_0^t \Tr_s
\left( e^{-(sD_{\alpha,t,i}^2 + i{\mathcal L}_X)} D_{\alpha,t,i}
[D_{\alpha,t,i}, \widetilde{\eta}_\alpha] \right) \: ds. \notag
\end{align}
Note that
$t D_{\alpha,t,i}^2 + i{\mathcal L}_X$ is a self-adjoint operator.
Now
\begin{equation} \label{5.36}
\Tr_s
\left( e^{-(t D_{\alpha,t,i}^2 + i{\mathcal L}_X)} \widetilde{\eta}_\alpha \right) =
\int_{Y_\alpha} \trs
\left( e^{-(t D_{\alpha,t,i}^2 + i{\mathcal L}_X)} \right)(p,p) \:
\widetilde{\eta}_\alpha(p) \: d\mu_{Y_\alpha}(p).
\end{equation}
From \cite[Section 2]{Bismut (1985)},
\begin{equation} \label{5.37}
\lim_{t \rightarrow 0} \left( e^{-(t D_{\alpha,t,i}^2 + i{\mathcal L}_X)} \right)(p,p)
\: = \: \left( \widehat{A}(iX, Y_\alpha) \:
\ch(iX, {\mathcal E}_\alpha/S) \right)(p).
\end{equation}
Thus
\begin{equation} \label{5.38}
\lim_{t \rightarrow 0} \sum_\alpha
\Tr_s
\left( e^{-(t D_{\alpha,t,i}^2 + i{\mathcal L}_X)} \widetilde{\eta}_\alpha \right) \: = \:
\sum_\alpha \int_{Y_\alpha} \widehat{A}(iX, Y_\alpha) \:
\ch(iX, {\mathcal E}_\alpha/S) \: \widetilde{\eta}_\alpha.
\end{equation}

Next, we want to show that
\begin{equation} \label{5.39}
\lim_{t \rightarrow 0} \frac12 \: \sum_\alpha \int_0^t \Tr_s
\left( e^{-(sD_{\alpha,t,i}^2 + i{\mathcal L}_X)} D_{\alpha,t,i}
[D_{\alpha,t,i}, \widetilde{\eta}_\alpha] \right) \: ds \: = \: 0.
\end{equation}
For this, we have to show certain cancellations between the
terms for various $\alpha$.

Define a measure $\nu_t$ on $W$ by
\begin{equation} \label{5.40}
\nu_t = \sum_\alpha (\pi_\alpha)_* \left(
\frac12 \int_0^t \trs
\left( e^{-(sD_{\alpha,t,i}^2 + i{\mathcal L}_X)} D_{\alpha,t,i}
[D_{\alpha,t,i}, \widetilde{\eta}_\alpha] \right)(p,p) \: d\mu_{Y_\alpha}(p) \: ds
\right).
\end{equation}
We want to show that the integral of $\nu_t$ vanishes as $t \rightarrow 0$.

Given $w \in W$, choose a point $\widetilde{p} \in {\mathcal T}$ that
projects to $w$. Let $\widetilde{K}$ be the isotropy group of
$\overline{{\mathcal G}_{\mathcal T}}$ at $\widetilde{p}$.
For each $\alpha$ with $w \in U_\alpha$, choose
$p_\alpha \in Y_\alpha$ projecting to $w$. By the slice theorem,
there is a neighborhood of $w$ in $W$ homeomorphic to
$B(\widetilde{V})/\widetilde{K}$, where $\widetilde{V}$ is a
representation space of $\widetilde{K}$ and
$B(\widetilde{V})$ is a ball in $\widetilde{V}$.
There is a neighborhood of $\widetilde{p}$ which, for each $\alpha$,
is isometric to a neighborhood of $p_\alpha$. We will use this to
identify each $p_\alpha$ with $\widetilde{p}$.

Using Example \ref{example8},
\begin{align} \label{5.41}
\nu_t(w) \: = & \:
\left( \sum_\alpha
\frac12 \int_0^t \trs
\left( e^{-(sD_{\alpha,t,i}^2 + i{\mathcal L}_X)} D_{\alpha,t,i}
[D_{\alpha,t,i}, \widetilde{\eta}_\alpha] \right)(\widetilde{p},\widetilde{p}) \:
ds \right) \: h_W(w) \: d\mu_W(w) \\
= & \: \sum_{\alpha,\beta}
\frac12 \int_0^t \left[ \trs
\left( e^{-(sD_{\alpha,t,i}^2 + i{\mathcal L}_X)} D_{\alpha,t,i}
[D_{\alpha,t,i}, \widetilde{\eta}_\alpha] \right)(\widetilde{p},\widetilde{p}) \: -
\right. \notag \\
& \left. \trs
\left( e^{-(sD_{\beta,t,i}^2 + i{\mathcal L}_X)} D_{\beta,t,i}
[D_{\alpha,t,i}, \widetilde{\eta}_\alpha] \right)(\widetilde{p},\widetilde{p}) \right] \: \eta_\beta(w) \:
ds \: h_W(w) \: d\mu_W(w). \notag
\end{align}
As $D_{\alpha,t,i}$ coincides with $D_{\beta,t,i}$ in a neighborhood of
$\widetilde{p}$, under our identifications, it follows
from finite propagation speed estimates
\cite{Cheeger-Gromov-Taylor (1982)}
that $\frac{\nu_t(w)}{h_W(w) \: d\mu_W(w)}$ decays as
$t \rightarrow 0$ faster than any power of $t$.
These estimates can clearly be made uniform in $w$.
The proposition follows.
\end{proof}

We now prove a delocalized index theorem.

\begin{corollary} \label{corollary11}
\begin{equation} \label{5.42}
\Ind(D_{\inv}) = AV_X
\sum_\alpha \int_{Y_\alpha}
\widehat{A}(X, Y_\alpha) \: \ch(X, {\mathcal E}_\alpha/S) \:
\widetilde{\eta}_\alpha.
\end{equation}
\end{corollary}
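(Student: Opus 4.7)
The plan is to assemble the corollary from three earlier results: the averaging identity (\ref{5.28}), the invariance Corollary \ref{corollary10}, Proposition \ref{proposition9} on holomorphicity, and the short-time limit Proposition \ref{proposition10}.

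First I would record the chain
\begin{equation*}
\Ind(D_{\inv}) \: = \: AV_X F_{t,0}(X) \: = \: AV_X F_{t,1}(X),
\end{equation*}
where the first equality is (\ref{5.28}) and the second uses independence of $\epsilon$ from Corollary \ref{corollary10}. Thus it suffices to identify $F_{t,1}(X)$, for $X \in \R^k$, with $\sum_\alpha \int_{Y_\alpha} \widehat{A}(X, Y_\alpha) \: \ch(X, {\mathcal E}_\alpha/S) \: \widetilde{\eta}_\alpha$, after which we apply $AV_X$ to conclude.

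Next I would pass to the imaginary axis. Fix any $X \in \R^k$. By Corollary \ref{corollary10}, $F_{t,1}(iX)$ is independent of $t$, so it equals its own $t \downarrow 0$ limit. Proposition \ref{proposition10} then gives
\begin{equation*}
F_{t,1}(iX) \: = \: \lim_{t \rightarrow 0} F_{t,1}(iX) \: = \: \sum_\alpha \int_{Y_\alpha} \widehat{A}(iX, Y_\alpha) \: \ch(iX, {\mathcal E}_\alpha/S) \: \widetilde{\eta}_\alpha
\end{equation*}
for every $X \in \R^k$ and every $t > 0$.

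Then I would upgrade this equality on $i\R^k$ to all of $\C^k$ by analytic continuation. By Proposition \ref{proposition9} combined with $\epsilon$-independence (Corollary \ref{corollary10}), $Y \mapsto F_{t,1}(Y)$ is entire on $\C^k$. The right-hand side $Y \mapsto \sum_\alpha \int_{Y_\alpha} \widehat{A}(Y, Y_\alpha) \: \ch(Y, {\mathcal E}_\alpha/S) \: \widetilde{\eta}_\alpha$ is, by the remark preceding Proposition \ref{proposition10}, holomorphic on $(\C^k \setminus \R^k) \cup U$ for some neighborhood $U$ of the origin. The two sides agree on the totally real subspace $i\R^k$, which is a uniqueness set for holomorphic functions on $\C^k \setminus \R^k$ that extend across the origin, so they agree everywhere on $(\C^k \setminus \R^k) \cup U$. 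In particular, any apparent singularities of the individual $\alpha$-summands along $\R^k$ must cancel in the sum, and the common holomorphic extension to a neighborhood of $\R^k$ coincides with $F_{t,1}$. Evaluating at $Y = X \in \R^k$ and applying $AV_X$ yields (\ref{5.42}).

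The main obstacle is the last step: individually the equivariant forms $\widehat{A}(X, Y_\alpha) \: \ch(X, {\mathcal E}_\alpha/S)$ develop singularities on the hyperplanes of $\R^k$ where new fixed-point sets appear, so the right-hand side of (\ref{5.42}) is not termwise well-defined on $\R^k$ and must be interpreted via the holomorphic extension furnished by $F_{t,1}$. Once this interpretation is in place (and with the understanding that $AV_X$ is applied to this regularized sum), the identity is an immediate consequence of analytic continuation from $i\R^k$, where everything is manifestly finite.
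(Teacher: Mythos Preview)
Your proof is correct and follows essentially the same route as the paper: start from $\Ind(D_{\inv}) = AV_X F_{t,\epsilon}(X)$, use Corollary \ref{corollary10} and Proposition \ref{proposition10} to identify $F_{t,\epsilon}$ on $i\R^k$ with the equivariant characteristic integral, invoke Proposition \ref{proposition9} (plus Corollary \ref{corollary10}) for holomorphicity, and analytically continue from $i\R^k$ to $\R^k$. The only cosmetic difference is that you track $F_{t,1}$ while the paper tracks $F_{t,0}$, and your analytic continuation discussion is a bit more elaborate about the domain of the right-hand side; the paper simply notes that the left-hand side is entire and agrees with the right-hand side on the uniqueness set $i\R^k$.
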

\begin{proof}
As in (\ref{5.28}),
$\Ind \left( D_{\inv} \right) = AV_X F_{t,0}(X)$.
By Corollary \ref{corollary10} and Proposition \ref{proposition9},
$F_{t,0}(X)$ has an holomorphic extension to $\C^k$.
By Corollary \ref{corollary10} and Proposition \ref{proposition10},
if $X \in i \R^k$ then
\begin{equation} \label{analytic}
F_{t,0}(X) = \sum_\alpha \int_{Y_\alpha}
\widehat{A}(X, Y_\alpha) \: \ch(X, {\mathcal E}_\alpha/S) \:
\widetilde{\eta}_\alpha.
\end{equation}
By analytic continuation, (\ref{analytic}) holds for $X \in \C^k$.
The corollary follows.
\end{proof}

\begin{remark}
Although $\int_{Y_\alpha}
\widehat{A}(X, Y_\alpha) \: \ch(X, {\mathcal E}_\alpha/S) \:
\widetilde{\eta}_\alpha$ may have singularities in $X$ for
individual $\alpha$, the
proof of Corollary \ref{corollary11} shows that the sum over $\alpha$ is
holomorphic in $X$.
\end{remark}

\section{Local index formula and applications} \label{section6}

In this section we prove the main theorem of the paper.
In Subsection \ref{subsection5.4} we
localize the index theorem of the previous section
to the fixed-point sets.  In Subsection \ref{local} we prove the
index theorem stated in the introduction of the paper.
In Subsection \ref{computing}
we describe how to compute the terms appearing in the local index formula.
We carry out the computation when $D$ is the pure Dirac operator,
the signature operator and the Euler operator.

\subsection{Localization to the fixed-point set} \label{subsection5.4}

Let ${\mathcal T}^{T^k}$ be the subset of ${\mathcal T}$ consisting
of points with isotropy group isomorphic to $T^k$.
Let $\{Z_i^{T^k}\}$ be the connected components of
$\sigma\left( {\mathcal T}^{T^k} \right) \subset W$. From our assumptions,
each $Z_i^{T^k}$ is a smooth manifold.  Furthermore, the
Clifford module ${\mathcal E}$ on ${\mathcal T}$ descends to a
$T^k$-equivariant Clifford module ${\mathcal E}_i$ on $Z_i^{T^k}$.
There is a natural vector bundle $N_i$ on $Z_i^{T^k}$ so that for
$w \in Z_i^{T^k}$, if we choose $p \in \sigma^{-1}(w) \in {\mathcal T}$
then the fiber $(N_i)_w$ is isomorphic to the normal bundle of
${\mathcal T}^{T^k}$ in ${\mathcal T}$ at $p$.
The bundle $N_i$ inherits an orthogonal connection.
Let $R_{N_i}$ denote its curvature $2$-form.

For simplicity, we assume that ${\mathcal T}$ has a
${\mathcal G}_{\mathcal T}$-invariant spin structure,
with spinor bundle $S^{\mathcal T}$,
and that
${\mathcal E} \: = \: S^{\mathcal T} \otimes {\mathcal W}$ for
some $\Z_2$-graded ${\mathcal G}_{\mathcal T}$-equivariant
vector bundle ${\mathcal W}$. Suppose further that each
$Z_i^{T^k}$ is spin. We can define the normal spinor bundle
$S_N$ on $Z_i^{T^k}$.

Let $e^{-X} \in T^k$ denote the exponential of $-X \in {\mathfrak g}$.

\begin{proposition} \label{proposition11}
\begin{equation} \label{5.43}
AV_X \sum_\alpha \int_{Y_\alpha}
\widehat{A}(X, Y_\alpha) \: \ch(X, {\mathcal E}_\alpha/S) \:
\widetilde{\eta}_\alpha \: = \:
AV_X \sum_i \int_{Z^{T^k}_i}
\widehat{A}(TZ^{T^k}_i) \: \frac{\ch_{\mathcal W}(e^{-X})}{\ch_{S_N}(e^{-X})}.
\end{equation}
\end{proposition}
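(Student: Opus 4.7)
My plan is to deduce the identity by applying the Berline--Vergne equivariant localization formula to each $Y_\alpha$ and then assembling the fixed-point contributions using the partition of unity and measure normalization from Subsection~\ref{subsection5.1}. Since $\widetilde{\eta}_\alpha$ is the extension by zero of the $T^k$-invariant pullback of $\eta_\alpha$ from $W$, it is a $T^k$-invariant function on $Y_\alpha$, hence equivariantly closed as a degree-zero form, so that its product with $\widehat{A}(X,Y_\alpha)\,\ch(X,\mathcal{E}_\alpha/S)$ remains equivariantly closed. The Atiyah--Segal--Singer fixed-point formula \cite[Chapter 6]{Berline-Getzler-Vergne (1992)} then yields, for $X$ outside the locus where the equivariant normal Euler form vanishes,
\begin{equation*}
\int_{Y_\alpha} \widehat{A}(X,Y_\alpha)\,\ch(X,\mathcal{E}_\alpha/S)\,\widetilde{\eta}_\alpha
\;=\; \sum_{F \subset Y_\alpha^{T^k}} \int_F \widehat{A}(TF)\,\frac{\ch_{\mathcal W}(e^{-X})}{\ch_{S_N}(e^{-X})}\,\widetilde{\eta}_\alpha\big|_F.
\end{equation*}

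The next step is to identify the components $F$ that contribute. As $\widetilde{\eta}_\alpha$ vanishes outside the slice model $B(V_\alpha)\times_{K_\alpha}G_\alpha$, and since an abelian-group computation using $V_\alpha^{K_\alpha}=\{0\}$ and $G_\alpha = T^k$ abelian shows $(B(V_\alpha)\times_{K_\alpha}T^k)^{T^k}$ is empty unless $K_\alpha = T^k$, only the charts with $K_\alpha = T^k$ contribute. For such $\alpha$, the slice collapses to $B(V_\alpha)$ with the linear $T^k$-action via $\rho_\alpha$, and the fixed submanifold $B(V_\alpha)^{T^k}$ descends under $\sigma$ isometrically onto the portion of $Z^{T^k}_i$ inside $U_\alpha$ (the orbit map being the identity on a trivial-action locus). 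The equivariant Clifford data on $Y_\alpha$ restrict on $B(V_\alpha)^{T^k}$ to the bundles $\mathcal{W}$, $N_i$, $S_N$ of Subsection~\ref{subsection5.4}. Combining these local identifications with $\sum_\alpha \eta_\alpha = 1$ on $W$, together with the measure normalization $\sigma_* d\mu_{B(V_\alpha)\times_{K_\alpha}G_\alpha} = h_W\,d\mu_W|_{B(V_\alpha)/K_\alpha}$ fixed in Subsection~\ref{subsection5.1}, the local fixed-point contributions assemble into
\begin{equation*}
\sum_\alpha \int_{Y_\alpha} \widehat{A}(X,Y_\alpha)\,\ch(X,\mathcal{E}_\alpha/S)\,\widetilde{\eta}_\alpha
\;=\; \sum_i \int_{Z^{T^k}_i} \widehat{A}(TZ^{T^k}_i)\,\frac{\ch_{\mathcal W}(e^{-X})}{\ch_{S_N}(e^{-X})},
\end{equation*}
and applying $AV_X$ to both sides gives the proposition.

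The main technical obstacle is that individual summands on the right-hand side have poles at zeros of $\ch_{S_N}(e^{-X})$, whereas, by Proposition~\ref{proposition9} and Corollary~\ref{corollary11}, the full sum is holomorphic in $X \in \mathbb{C}^k$. Consequently I would prove the displayed identity of sums first on the dense open subset of $\mathfrak{t}^k$ where all denominators are nonzero, extend by analytic continuation to $\mathbb{C}^k$, and only then apply $AV_X$; in particular, $AV_X$ of the right-hand side must be interpreted after performing the sum over $i$, not term by term, mirroring the non-interchangeability phenomenon already illustrated in Example~\ref{example9}. The only remaining care-point is checking that the Riemannian measure on $B(V_\alpha)^{T^k}$ matches the one induced from $Z^{T^k}_i$, which is exactly what the normalization convention of Subsection~\ref{subsection5.1} was arranged to ensure.
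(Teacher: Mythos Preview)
There is a genuine gap. Your claim that ``$\widetilde{\eta}_\alpha$ is a $T^k$-invariant function on $Y_\alpha$, hence equivariantly closed as a degree-zero form'' is false: for a degree-zero form one has $d_X \widetilde{\eta}_\alpha = d\widetilde{\eta}_\alpha$, and a bump function is not closed. Consequently $\widehat{A}(X,Y_\alpha)\,\ch(X,\mathcal{E}_\alpha/S)\,\widetilde{\eta}_\alpha$ is \emph{not} equivariantly closed, and neither the Berline--Vergne localization theorem nor the fixed-point formula of \cite[Chapter 6]{Berline-Getzler-Vergne (1992)} applies to it on a single $Y_\alpha$. Your first displayed equation therefore has no justification as stated.

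The paper confronts exactly this obstruction. It writes the pointwise identity (\ref{5.44}), valid away from the zero set $Z(X)$, which expresses the integrand as a $d_X$-exact piece \emph{plus} an explicit error term carrying the factor $d_X\widetilde{\eta}_\alpha = d\widetilde{\eta}_\alpha$. The partition-of-unity relation $\sum_\alpha \widetilde{\eta}_\alpha = 1$ is then used not merely to reassemble fixed-point contributions (as you do) but, crucially, to make these error terms cancel across the various $\alpha$ on overlaps, so that the localization argument of \cite[Theorem 7.13]{Berline-Getzler-Vergne (1992)} goes through for the \emph{sum} over $\alpha$ and yields (\ref{5.45}). Note also that the paper localizes first to $Z(X)$ for each $X$ and only afterward argues, under $AV_X$, that nothing survives except components lying in $\bigcap_{X'} Z(X')$; your direct jump to $Y_\alpha^{T^k}$ is correct only for generic $X$, and while your remarks about dense open sets and analytic continuation gesture in the right direction, the two-step order in the paper is what makes the argument clean. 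The fix to your proof is to drop the equivariant-closedness claim, keep track of the $d\widetilde{\eta}_\alpha$ remainder, and invoke $\sum_\alpha d\widetilde{\eta}_\alpha = 0$ at the level of the localization estimate rather than only at the end.
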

\begin{proof}
Let $Z(X)$ denote the zero-set of $X$ on $\coprod_\alpha Y_\alpha$.
As in \cite[Chapter 7.2]{Berline-Getzler-Vergne (1992)},
away from $Z(X)$ we can write
\begin{align} \label{5.44}
\widehat{A}(X, Y_\alpha) \: \ch(X, {\mathcal E}_\alpha/S) \:
\widetilde{\eta}_\alpha \: = \: &
d_X \left( \frac{X^* \wedge
\widehat{A}(X, Y_\alpha) \: \ch(X, {\mathcal E}_\alpha/S) \:
\widetilde{\eta}_\alpha}
{d_X X^*} \right) \: + \\
& \frac{X^* \wedge
\widehat{A}(X, Y_\alpha) \: \ch(X, {\mathcal E}_\alpha/S) \:
}
{d_X X^*} \wedge d_X \widetilde{\eta}_\alpha. \notag
\end{align}
This formula extends analytically to $X$ lying in a
suitable neighborhood of the origin in $\C^k$.
Then because $\sum_\alpha \widetilde{\eta}_\alpha \: = \: 1$,
the localization argument in the proof of
\cite[Theorem 7.13]{Berline-Getzler-Vergne (1992)} applies to give
\begin{equation} \label{5.45}
\sum_\alpha \int_{Y_\alpha}
\widehat{A}(X, Y_\alpha) \: \ch(X, {\mathcal E}_\alpha/S) \:
\widetilde{\eta}_\alpha \: = \:
\sum_\alpha
\int_{Z(X)}
\widehat{A}(TZ(X)) \: \frac{\ch_{\mathcal W}(e^{-X})}{\ch_{S_N}(e^{-X})}
\: \widetilde{\eta}_\alpha.
\end{equation}
Because the left-hand side of (\ref{5.45}) has a holomorphic extension
to $\C^k$, the same is true for the right-hand side.  So the
formula makes sense for $X \in \R^K$.

When we average over $X \in \R^k$, the integral over a component of
$Z(X)$ will not contribute unless the component lies in
$\bigcap_{X^\prime \in \R^k} Z(X^\prime)$.
Hence
\begin{equation} \label{5.46}
AV_X \sum_\alpha \int_{Y_\alpha}
\widehat{A}(X, Y_\alpha) \: \ch(X, {\mathcal E}_\alpha/S) \:
\widetilde{\eta}_\alpha \: = \:
AV_X \sum_\alpha
\int_{\bigcap_{X^\prime} Z(X^\prime)}
\widehat{A}(TZ(X)) \: \frac{\ch_{\mathcal W}(e^{-X})}{\ch_{S_N}(e^{-X})}
\: \widetilde{\eta}_\alpha.
\end{equation}
We can identify the image of $\bigcap_{X^\prime} Z(X^\prime)$, under
the projection map $\coprod_\alpha \left( B(V_\alpha) \times_{K_\alpha}
G_\alpha \right) \rightarrow W$, with
$\bigcup_i Z_i^{T^k}$. After making this identification, the
proposition follows.
\end{proof}

\begin{remark}
It follows from the proof of Proposition \ref{proposition11} that
$\sum_i \int_{Z^{T^k}_i}
\widehat{A}(TZ^{T^k}_i) \: \frac{\ch_{\mathcal W}(e^{-X})}{\ch_{S_N}(e^{-X})}$
is holomorphic in $X \in \C^k$.
Each term
$\int_{Z^{T^k}_i}
\widehat{A}(TZ^{T^k}_i) \: \frac{\ch_{\mathcal W}(e^{-X})}{\ch_{S_N}(e^{-X})}$
is meromorphic in $X \in \C^k$.
\end{remark}

\begin{corollary} \label{corollary12}
For any $Q \in \C^k$,
\begin{equation} \label{5.47}
\Index(D_{\inv}) \: = \: AV_X \sum_i \int_{Z^{T^k}_i}
\widehat{A}(TZ^{T^k}_i) \:
\frac{\ch_{\mathcal W}(e^{-X+Q})}{\ch_{S_N}(e^{-X+Q})}.
\end{equation}
\end{corollary}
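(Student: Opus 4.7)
The plan is to derive this corollary from Corollary \ref{corollary11} together with Proposition \ref{proposition11} by exploiting translation invariance of $AV_X$. Combining these two results gives the $Q = 0$ case of (\ref{5.47}) immediately, so setting
\begin{equation*}
F(X) \: = \: \sum_i \int_{Z^{T^k}_i} \widehat{A}(TZ^{T^k}_i) \: \frac{\ch_{\mathcal W}(e^{-X})}{\ch_{S_N}(e^{-X})},
\end{equation*}
we have $\Index(D_{\inv}) \: = \: AV_X F(X)$.

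The key step is to identify $F$ as a finite $\C$-linear combination of exponentials $e^{- 2\pi \sqrt{-1} v \cdot X}$. Decomposing ${\mathcal W} \big|_{Z_i^{T^k}}$ and the normal bundle $N_i$ into $T^k$-weight summands, $\ch_{\mathcal W}(e^{-X})$ becomes a finite sum $\sum_\lambda e^{-2\pi \sqrt{-1} \lambda \cdot X} \alpha_\lambda$ with closed forms $\alpha_\lambda$ on $Z_i^{T^k}$, while $\ch_{S_N}(e^{-X})$ is a product of factors of the form $e^{-\pi \sqrt{-1} \mu \cdot X} - e^{\pi \sqrt{-1} \mu \cdot X}$ coupled with closed forms. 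Hence each individual summand over $i$ is a rational function of these exponentials, meromorphic in $X \in \C^k$ with poles only along the lattice hyperplanes $\mu \cdot X \in \Z$. By the remark following Proposition \ref{proposition11}, the potential singularities cancel when one sums over $i$, so $F$ extends to an entire function of $X \in \C^k$. A combination of the above form which is globally holomorphic must be a Laurent polynomial in the exponentials; that is, $F(X) \: = \: \sum_v c_v \: e^{-2\pi \sqrt{-1} v \cdot X}$ with $v$ ranging over a finite subset of the weight lattice and $c_v \in \C$.

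Given this description, for any $Q \in \C^k$,
\begin{equation*}
F(X - Q) \: = \: \sum_v c_v \: e^{2\pi \sqrt{-1} v \cdot Q} \: e^{-2\pi \sqrt{-1} v \cdot X},
\end{equation*}
which remains a finite sum of periodic functions of $X \in \R^k$ in the sense of (\ref{5.20}), merely with shifted complex coefficients. Applying the definition (\ref{5.19}) then gives $AV_X F(X - Q) \: = \: c_0 \: = \: AV_X F(X) \: = \: \Index(D_{\inv})$. Since $e^{-X + Q} \: = \: e^{-(X - Q)}$, this is precisely the content of (\ref{5.47}).

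The only nontrivial point is the Laurent-polynomial structure of $F$, which rests on the cancellation of poles observed in the remark after Proposition \ref{proposition11}; once this is in hand, the translation invariance of $AV_X$ on such functions handles both real and complex $Q$ uniformly, with no separate analytic-continuation argument needed.
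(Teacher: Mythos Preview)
Your proof is correct and follows essentially the same route as the paper's: establish the $Q=0$ case from Corollary~\ref{corollary11} and Proposition~\ref{proposition11}, then use the holomorphy of $F$ (the remark after Proposition~\ref{proposition11}) to show that $AV_X$ is insensitive to the complex shift $X \mapsto X-Q$. The paper phrases this last step as a contour-deformation argument after writing $F = \sum_j H_j$ with each $H_j$ holomorphic and periodic with respect to some lattice $L_j \subset \R^k$ (possibly different for different $j$); you instead assert the stronger structural fact that $F$ is a \emph{finite} sum of exponentials and read off the constant term directly.

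One caution: the phrase ``a finite subset of the weight lattice'' presumes a single lattice, but the isotropy tori over different components $Z_i^{T^k}$ may correspond to distinct lattices $\Lambda_i \subset \R^k$, so the exponents need only lie in a finite subset of $(\R^k)^*$. Your conclusion still holds, but the passage from ``an entire sum of rational functions in possibly incommensurable families of exponentials'' to ``a finite exponential sum'' is not entirely automatic; it implicitly requires grouping the summands by commensurability class and then observing that a rational function on an algebraic torus which is everywhere regular is a Laurent polynomial. The paper's formulation sidesteps this by asking only for holomorphic lattice-periodic pieces $H_j$, for which the contour-deformation argument needs no finiteness. Either way the substance is the same.
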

\begin{proof}
The integral $\int_{Z^{T^k}_i}
\widehat{A}(TZ^{T^k}_i) \:
\frac{\ch_{\mathcal W}(e^{-X})}{\ch_{S_N}(e^{-X})}$ is a meromorphic
function in $X \in \C^k$ which is invariant with respect to a lattice $L_i
\subset \R^k$.
As the sum over $i$ is holomorphic, it follows that we can write
$\sum_i \int_{Z^{T^k}_i}
\widehat{A}(TZ^{T^k}_i) \:
\frac{\ch_{\mathcal W}(e^{-X})}{\ch_{S_N}(e^{-X})}$ as a finite sum
$\sum_j H_j(X)$, where each $H_j$ is a holomorphic function of
$X \in \C^k$ that is invariant 
with respect to a lattice $L_j \subset \R^k$. 
Now $AV_X H_j(X)$ can be computed 
by means of a product of contour integrals in $\C^k$. Computing instead
$AV_X H_j(X-Q)$ amounts to deforming the contours. Hence
$AV_X H_j(X-Q) = AV_X H_j(X)$, from which the corollary follows.
\end{proof}

\subsection{Local index formula} \label{local}

We will need the explicit
formula for $\frac{1}{\ch_{S_N}(e^{-X+Q})}$.
Given $z \neq 1$ and a complex $r$-dimensional vector bundle $L$,
put
\begin{equation} \label{4.21}
{\mathcal F}_{\Dirac}(L,z) \: = \: \prod_{j=1}^r \left(
z^{-\frac12} \: e^{\frac{x_j}{2}} \: - \:
z^{\frac12} \: e^{-\frac{x_j}{2}} \right)^{-1},
\end{equation}
where the $x_j$'s are the formal roots of the total Chern class of $L$.
As usual, the expression (\ref{4.21}) is meant to be expanded in the
$x_j$'s, which have formal degree two.

Let $Z_i^{T^k}$ and $N_i$ be as before.
Suppose that with respect to the $\R^k$-action,
$N_i$ is isomorphic to the underlying real bundle of a
direct sum of complex line bundles
$\bigoplus_q N_{q,i}$,
where $e^{-X}$ acts on $N_{q,i}$
by $e^{- \sqrt{-1} {\bf n}_{q,i} \cdot X}$ for some
${\bf n}_{q,i} \in \R^k$.
Then
\begin{equation} \label{4.22}
\frac{1}{\ch_{S_N}(e^{-X+Q})} \: = \: \pm \:
\prod_q {\mathcal F}_{\Dirac} \left( N_{q,i},
e^{- \sqrt{-1} {\bf n}_{q,i} \cdot (X-Q)} \right).
\end{equation}
See \cite{Atiyah-Hirzebruch (1970)} for a discussion of the sign issue.

The individual term
$\int_{Z_i^{T^k}}
\widehat{A}(TZ_i^{T^k}) \:
\frac{\ch_{\mathcal W}(e^{-X+Q})}{\ch_{S_N}(e^{-X+Q})}$ is smooth
in $X$ provided that $\Image(Q) \notin \bigcup_q {\bf n}_{q,i}^\perp$.

Let $W_{\max}$ denote the image of $\bigcup_i Z_i^{T^k}$
under
the projection map $\coprod_\alpha \left( B(V_\alpha) \times_{K_\alpha}
G_\alpha \right) \rightarrow W$.  It is a smooth manifold and is
the deepest stratum in $W$,
with respect to the partial ordering described in
\cite[Section 3.3]{Haefliger (1985)}.
Note that $W_{\max}$ could be the empty set.

Suppose that ${\mathcal E} = S^{\mathcal T} \otimes W$ and that
$W_{\max}$ is spin.

\begin{definition}
If $\Image(Q) \notin \bigcup_i \bigcup_q {\bf n}_{q,i}^\perp$,
define ${\mathcal N}_{{\mathcal E}, Q} \in \Omega^* W_{\max}$ by
\begin{equation} \label{Nformula}
{\mathcal N}_{{\mathcal E}, Q} \: = \: AV_X
\frac{
\ch_{\mathcal W}(e^{-X+Q})
}{
\ch_{\mathcal S_N}(e^{-X+Q})
}.
\end{equation}
\end{definition}

\begin{theorem} \label{theorem2}
\begin{equation}
\Index(D_{\inv}) \: = \: \int_{W_{\max}}
\widehat{A}(TW_{\max}) \: {\mathcal N}_{{\mathcal E}, Q}.
\end{equation}
\end{theorem}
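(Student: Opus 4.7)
The plan is to deduce Theorem \ref{theorem2} from Corollary \ref{corollary12} by identifying $\bigsqcup_i Z_i^{T^k}$ with $W_{\max}$ and then commuting the averaging operator $AV_X$ past the integration over the compact manifold $W_{\max}$.

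For the identification, a point $w \in W$ has isotropy $K_w \cong T^k$ if and only if some (equivalently every) $p \in \sigma^{-1}(w)$ lies in $\mathcal{T}^{T^k}$, so $W_{\max} = \sigma(\mathcal{T}^{T^k}) = \bigsqcup_i Z_i^{T^k}$ as smooth manifolds. Under this identification the tangent bundles $TZ_i^{T^k}$ and the normal spinor bundle $S_N$ are the restrictions of globally defined bundles on $W_{\max}$, and Corollary \ref{corollary12} becomes
\begin{equation} \label{planstart}
\Index(D_{\inv}) \: = \: AV_X \int_{W_{\max}} \widehat{A}(TW_{\max}) \: \frac{\ch_{\mathcal W}(e^{-X+Q})}{\ch_{S_N}(e^{-X+Q})}.
\end{equation}

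To commute $AV_X$ with $\int_{W_{\max}}$, I would work one connected component of $W_{\max}$ at a time. On such a component the $T^k$-weights $\mathbf{n}_{q,i}$ acting on $N_i$ and the weights appearing in $\mathcal{W}|_{W_{\max}}$ are locally constant, and the genericity assumption $\Image(Q) \notin \bigcup_{q,i}\mathbf{n}_{q,i}^\perp$ forces $|e^{-i\mathbf{n}_{q,i}\cdot(X-Q)}| \neq 1$ uniformly in $X \in \R^k$. Each Dirac factor $(z^{-1/2}e^{x_j/2} - z^{1/2}e^{-x_j/2})^{-1}$ from (\ref{4.21})--(\ref{4.22}) then expands as a convergent geometric series in whichever of $z = e^{-i\mathbf{n}_{q,i}\cdot(X-Q)}$ and $z^{-1}$ has modulus strictly less than $1$, with geometric decay. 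Since the formal Chern roots $x_j$ are nilpotent after substitution, $e^{-nx_j}$ is a polynomial in $n$ of bounded degree. Multiplying the expansions and combining with the polynomial curvature contributions from $\widehat{A}(TW_{\max})$ and $\ch_{\mathcal W}$ presents the integrand as an absolutely convergent exponential series
\begin{equation}
\widehat{A}(TW_{\max}) \: \frac{\ch_{\mathcal W}(e^{-X+Q})}{\ch_{S_N}(e^{-X+Q})} \: = \: \sum_{\lambda \in \Lambda} \omega_\lambda(p) \: e^{i\lambda \cdot X},
\end{equation}
whose differential-form coefficients $\omega_\lambda \in \Omega^*(W_{\max})$ decay exponentially in $\lambda$, uniformly in $p$ on the component.

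Uniform decay plus compactness of $W_{\max}$ then justifies term-by-term integration, and $AV_X$ extracts the constant Fourier coefficient, giving
\begin{equation}
AV_X \int_{W_{\max}} \widehat{A}(TW_{\max}) \: \frac{\ch_{\mathcal W}(e^{-X+Q})}{\ch_{S_N}(e^{-X+Q})} \: = \: \int_{W_{\max}} \omega_0 \: = \: \int_{W_{\max}} \widehat{A}(TW_{\max}) \: {\mathcal N}_{{\mathcal E}, Q}
\end{equation}
by Definition \ref{generaldef}. Combined with (\ref{planstart}), this yields the theorem. The main obstacle is the uniform convergence step: the curvature factors produce polynomial growth in $\lambda$ in the Fourier coefficients, but this is dominated by the geometric decay forced by $\Image(Q) \notin \bigcup_{q,i} \mathbf{n}_{q,i}^\perp$, which keeps $|e^{\pm i\mathbf{n}_{q,i}\cdot(X-Q)}|$ bounded uniformly away from $1$.
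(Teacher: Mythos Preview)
Your proposal is correct and follows the same route as the paper, whose proof is the single line ``This follows from Corollary~\ref{corollary12}.'' Your justification for commuting $AV_X$ with $\int_{W_{\max}}$ via an absolutely convergent Fourier expansion is fine but more elaborate than necessary: on each component $Z_i^{T^k}$ the integrand is genuinely periodic in $X$ with respect to a lattice (the action of $e^{-X}$ factors through $T^k$), so $AV_X$ reduces to integration over a compact torus and Fubini applies directly.
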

\begin{proof}
This follows from Corollary \ref{corollary12}.
\end{proof}

We now remove the assumptions that
${\mathcal E} = S^{\mathcal T} \otimes W$ and
$W_{\max}$ is spin.
We use the notation of \cite[Chapter 6.4]{Berline-Getzler-Vergne (1992)}.

\begin{definition} \label{generaldef}
If $\Image(Q) \notin \bigcup_i \bigcup_q {\bf n}_{q,i}^\perp$,
define ${\mathcal N}_{{\mathcal E}, Q} \in \Omega^* W_{\max}$ by
\begin{equation} \label{5.49}
{\mathcal N}_{{\mathcal E}, Q} \: = \: AV_X
\frac{
\ch_{{\mathcal E}/S_N}(e^{-X+Q})
}{
\sqrt{\det \left( 1 - e^{-X+Q} \cdot e^{-R_N} \right)}
}.
\end{equation}
\end{definition}

\begin{theorem} \label{theorem3}
\begin{equation} \label{5.50}
\Index(D_{\inv}) \: = \: \int_{W_{\max}}
\widehat{A}(TW_{\max}) \: {\mathcal N}_{{\mathcal E}, Q}.
\end{equation}
\end{theorem}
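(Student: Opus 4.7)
The plan is to run the proof of Theorem \ref{theorem2} once more, but replacing the spin-specific fixed-point formula by the general (non-spin) form of the equivariant Berline-Vergne localization theorem. First I would note that the arguments of Section \ref{section5}, through Corollary \ref{corollary11}, are already spin-free: the equivariant characteristic form $\widehat{A}(X, Y_\alpha) \: \ch(X, {\mathcal E}_\alpha/S)$ is defined for an arbitrary $\overline{{\mathcal G}_{\mathcal T}}$-equivariant Clifford module via the constructions of \cite[Chapter 8.1]{Berline-Getzler-Vergne (1992)}, and the short-time heat-kernel limit used in Proposition \ref{proposition10} holds in this generality. So Corollary \ref{corollary11} is available as stated.

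Next I would apply the Berline-Vergne equivariant localization theorem \cite[Theorem 7.13]{Berline-Getzler-Vergne (1992)} together with the general Clifford-module fixed-point formula of \cite[Chapter 6.4]{Berline-Getzler-Vergne (1992)} to rewrite each integral $\int_{Y_\alpha} \widehat{A}(X, Y_\alpha) \: \ch(X, {\mathcal E}_\alpha/S) \: \widetilde{\eta}_\alpha$ as an integral over the zero-set $Z(X)$ whose integrand is
\begin{equation*}
\widehat{A}(TZ(X)) \: \frac{\ch_{{\mathcal E}/S_N}(e^{-X})}{\sqrt{\det \left( 1 - e^{-X} \cdot e^{-R_N} \right)}} \: \widetilde{\eta}_\alpha.
\end{equation*}
The averaging argument used in the proof of Proposition \ref{proposition11} then shows that after applying $AV_X$ only the components lying in $W_{\max} = \bigcap_{X^\prime} Z(X^\prime)$ survive; the deformation argument of Corollary \ref{corollary12} lets one replace $X$ by $X-Q$ throughout, since the averaged quantity is a finite sum of contour integrals of lattice-periodic holomorphic functions and is unchanged under translation.

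The step I expect to be the main obstacle is making the fixed-point integrand genuinely global on $W_{\max}$ in the absence of spin structures. Each of the factors $\widehat{A}(TW_{\max})$, $\ch_{{\mathcal E}/S_N}(e^{-X+Q})$ and $\sqrt{\det \left( 1 - e^{-X+Q} \cdot e^{-R_N} \right)}$ is, by itself, only defined after choosing a local spin structure on $TW_{\max}$ and on $N$; what is globally well defined is their combination, since a change of local spin structure flips the sign of both the relative Chern character and of the square root, and these sign ambiguities cancel. This is the standard sign-cancellation argument of \cite[Chapter 6.4]{Berline-Getzler-Vergne (1992)}; once it is in place, the three steps above combine to give (\ref{5.50}).
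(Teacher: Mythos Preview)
Your proposal is correct and matches the paper's own approach. The paper's proof of Theorem~\ref{theorem3} is extremely terse: it first observes that in the spin case the expression in Definition~\ref{generaldef} agrees with (\ref{Nformula}), so the statement reduces to Theorem~\ref{theorem2}, and then simply asserts that ``the general case can be proved by means similar to the proof of Theorem~\ref{theorem2}, carrying along the more general assumptions throughout''; you have correctly spelled out what that rerun entails, including the use of the non-spin fixed-point integrand from \cite[Chapter 6.4]{Berline-Getzler-Vergne (1992)} and the sign-cancellation that makes it globally defined on $W_{\max}$.
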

\begin{proof}
If ${\mathcal E} \: = \: S^{\mathcal T} \otimes {\mathcal W}$
and $W_{\max}$ is spin
then from
\cite[Chapter 6.4]{Berline-Getzler-Vergne (1992)},
\begin{equation}
\frac{
\ch_{{\mathcal E}/S_N}(e^{-X+Q})
}{
\sqrt{\det \left( 1 - e^{-X+Q} \cdot e^{-R_N} \right)}
} = \frac{\ch_{\mathcal W}(e^{-X+Q})}{\ch_{S_N}(e^{-X+Q})}.
\end{equation}
Hence in this case, the theorem reduces to Theorem \ref{theorem2}.
The general case can be proved by means similar to the proof of
Theorem \ref{theorem2}, carrying along the more general assumptions
throughout.
\end{proof}

Theorem \ref{theorem3} implies Theorem \ref{theorem1}, because of our assumption in
Theorem \ref{theorem1} that the Molino sheaf acts on the Clifford module ${\mathcal E}$
(which lives on $M$).
More precisely, we are assuming that the restriction ${\mathcal E}_{\mathcal T}$
of  ${\mathcal E}$ to ${\mathcal T}$ carries a representation of the Lie
algebroid $\overline{{\mathfrak g}_{\mathcal T}}$ in the sense of
\cite[Section 1.4]{Crainic (2003)}. Then ${\mathcal E}_{\mathcal T}$ is a
$\overline{{\mathcal G}_{\mathcal T}}$-equivariant vector bundle on ${\mathcal T}$ and
Theorem \ref{theorem3} applies.

\begin{remark} \label{remark4}
If $M$ is a simply-connected manifold with a Riemannian foliation
then its space $W$ of leaf closures is the quotient of an orbifold $Y$
by a $T^N$-action \cite{Haefliger-Salem (1990)}. One might
hope to reduce the computation of the index of a basic Dirac-type operator
on $M$ to the computation of the $T^N$-invariant index of a
Dirac-type operator on $Y$. Unfortunately, the \'etale groupoid
$Y \rtimes T^N_\delta$ is generally not
weak equivalent to $\overline{{\mathcal G}_{\mathcal T}}$ with its
\'etale topology. In general
$\dim(Y) > \dim({\mathcal T})$, so there is no associated Dirac-type
operator on $Y$.
\end{remark}

\subsection{Computing the index} \label{computing}

For simplicity, we assume again that
${\mathcal E} = S^{\mathcal T} \otimes {\mathcal W}$ (which is
always the case locally) and that $W_{\max}$ is spin,
so that we have the simpler
formula (\ref{Nformula}) for ${\mathcal N}_{{\mathcal E}, Q}$.

The action of $\{e^{-X}\}$ on $S_N$ and ${\mathcal W}$, over
a connected component $Z_i^{T^k}$ of $W_{\max}$,
factors through an action of $T^k$.
Because of this $T^k$-action,
we can compute $AV_X$ by performing the contour integral over
$(S^1)^k \subset \C^k$ of a certain rational function times
$\frac{dz_1}{2\pi \sqrt{-1} z_1} \ldots \frac{dz_k}{2\pi \sqrt{-1} z_k}$.
The result depends
{\it a priori} on $Q$
(recall that $\Image(Q) \notin \bigcup_i \bigcup_q {\bf n}_{q,i}^\perp$)
although of course the final answer for the index is independent of $Q$.

Changing $Q$ amounts to deforming the contour of integration in
$\C^k$.
Hence the local formula for
$\Index(D_{\inv})$ depends on $Q$ through the chamber of
$\bigcap_i \bigcap_q (\R^k - {\bf n}_{q,i}^\perp)$ to which
$\Image(Q)$ belongs.
Passing from one chamber to another one, the local formula could
{\it a priori} change.  This is not surprising, in view of the cancellations
of singularities that occur; one could add various local contributions
to the index formula, which will cancel out in the end.

We now apply Theorem \ref{theorem1} to some geometric Dirac-type operators,
in which case the action of the Molino sheaf on ${\mathcal E}$ is automatic.

\subsubsection{Pure Dirac operator} \label{example10}

\begin{proposition} \label{corollary13}
Suppose that $D$ is the pure Dirac operator.  Then
$\Index(D_{\inv})$ vanishes if $k > 0$, while
\begin{equation} \label{5.51}
\Index(D_{\inv}) \: = \: \widehat{A}(W)
\end{equation}
if $k =0$.
\end{proposition}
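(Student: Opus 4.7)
The plan is to apply Theorem \ref{theorem1} in its unrestricted form Theorem \ref{theorem3} to the pure Dirac operator. In this case the twisting bundle ${\mathcal W}$ in the decomposition ${\mathcal E} = S^{\mathcal T} \otimes {\mathcal W}$ is trivial, and the characteristic class (\ref{Nformula}) collapses to
\[
{\mathcal N}_{{\mathcal E}, Q} \: = \: AV_X \: \frac{1}{\ch_{S_N}(e^{-X+Q})}.
\]
When $k=0$, Assumption \ref{assumption1}.3 forces every isotropy $K_w$ to be trivial, so $W_{\max}=W$ is a closed spin manifold and the normal bundle $N$ has rank zero; the factor ${\mathcal N}_{{\mathcal E}, Q}$ equals $1$ and Theorem \ref{theorem1} gives $\Index(D_{\inv}) = \int_W \widehat{A}(TW) = \widehat{A}(W)$.

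For $k>0$ I will show that ${\mathcal N}_{{\mathcal E}, Q}$ vanishes identically on every connected component $Z_i^{T^k}$ of $W_{\max}$, whence the integral in Theorem \ref{theorem1} is zero. Because $Z_i^{T^k}$ lies in the deepest stratum, the $T^k$-action on the fibers of $N_i$ has no nonzero fixed vector, so every weight ${\bf n}_{q,i}$ in the decomposition $N_i = \bigoplus_q N_{q,i}$ is nonzero. Using (\ref{4.21})--(\ref{4.22}), each factor of $\ch_{S_N}(e^{-X+Q})^{-1}$ rewrites, after expansion in the nilpotent Chern roots $x_q$, as
\[
\left( z_q^{-1/2} e^{x_q/2} - z_q^{1/2} e^{-x_q/2} \right)^{-1} \: = \: z_q^{1/2} e^{-x_q/2} \left( 1 - z_q e^{-x_q} \right)^{-1}, \qquad z_q \: = \: e^{-\sqrt{-1}\, {\bf n}_{q,i} \cdot (X-Q)}.
\]

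The key point driving the vanishing is Fourier-analytic. Let $C$ be the chamber of $\R^k \setminus \bigcup_q {\bf n}_{q,i}^\perp$ containing $\Imaginary(Q)$, and set $\epsilon_q := \sign({\bf n}_{q,i} \cdot \Imaginary(Q)) \in \{\pm 1\}$. Expanding each factor as a geometric series in $z_q^{\epsilon_q}$ (which converges at every real $X$ precisely because ${\bf n}_{q,i}\cdot \Imaginary(Q) \neq 0$) produces only exponentials $e^{-\sqrt{-1}\, \epsilon_q (m_q + \tfrac12) {\bf n}_{q,i} \cdot X}$ with $m_q \in \Z_{\geq 0}$. Hence every Fourier frequency of the product has the form $-\sum_q \epsilon_q (m_q + \tfrac12) {\bf n}_{q,i}$; pairing with $\Imaginary(Q)$ yields $-\sum_q (m_q + \tfrac12) |{\bf n}_{q,i} \cdot \Imaginary(Q)|$, which is strictly negative, so no frequency is zero. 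The averaging operator $AV_X$ therefore annihilates the expansion term by term, giving ${\mathcal N}_{{\mathcal E}, Q} \equiv 0$.

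The main obstacle is a minor technical one: ${\mathcal N}_{{\mathcal E}, Q}$ is a differential form on $Z_i^{T^k}$ whose coefficients are only almost-periodic rather than finite sums of periodic functions in $X$, so one must justify that the Fourier-mode argument above legitimately computes $AV_X$ in this slightly larger class. Given Proposition \ref{proposition9} and the holomorphic regularity provided by the choice of $Q$ off $\bigcup_q {\bf n}_{q,i}^\perp$, this poses no difficulty, and the vanishing of every Fourier mode completes the proof for $k > 0$.
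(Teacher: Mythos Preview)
Your proof is correct and, for the case $k>0$, arrives at the vanishing by a route genuinely different from the paper's.  The paper does not fix $Q$ and perform a Fourier analysis; instead it invokes Corollary~\ref{corollary12} to note that $\Index(D_{\inv})$ is independent of $Q$, then replaces $Q$ by $\lambda Q$ and lets $\lambda\to\infty$.  Because each factor in (\ref{4.21}) carries a $z^{\pm 1/2}$ in the denominator and $\Image(Q)$ is off every hyperplane ${\bf n}_{q,i}^\perp$, the whole expression $1/\ch_{S_N}(e^{-X+\lambda Q})$ decays exponentially in $\lambda$, uniformly in $X$; hence its $AV_X$ tends to zero, and by $Q$-independence it is zero.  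Your argument extracts the same mechanism (the half-integer weight shift in $S_N$) at the Fourier-series level, which is more explicit but requires justifying the term-by-term computation of $AV_X$.  On that point your stated ``obstacle'' is actually a non-issue: on a fixed component $Z_i^{T^k}$ the action of $e^{-X}$ factors through a single torus $\R^k/L_i$, so the integrand is \emph{periodic} (not merely almost-periodic) and $AV_X$ is simply the zeroth Fourier coefficient, computable by the contour integral described in Subsection~\ref{computing}; your absolutely convergent geometric expansion then legitimately gives zero.  For $k=0$ you read the result off the index formula with trivial normal bundle, whereas the paper observes directly that $M\to W$ is a fiber bundle and $D_{\inv}$ is (conjugate to) the Dirac operator on $W$; both are fine.
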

\begin{proof}
From Corollary \ref{corollary12},
\begin{equation}
\Index(D_{\inv}) \: = \: AV_X \sum_i \int_{Z^{T^k}_i}
\widehat{A}(TZ^{T^k}_i) \:
\frac{
1
}{
\ch_{S_N}(e^{-X+Q})
}.
\end{equation}
Take $Q$ so that $\Image(Q) \in
\bigcap_i \bigcap_q (\R^k - {\bf n}_{q,i}^\perp)$.
Consider the effect of multiplying $Q$ by $\lambda > 0$.
Each factor in (\ref{4.21}) has a term of either $z^{- \: \frac12}$
or $z^\frac12$, appearing in the denominator. It follows that
as $\lambda \rightarrow \infty$, the right-hand side of (\ref{4.22})
decreases exponentially fast in $\lambda$. Thus if $k > 0$ then
$\Index(D_{\inv}) = 0$. If $k = 0$ then the foliated manifold $M$
is the total space of a fiber bundle over $W = W_{\max}$ and $D_{\inv}$ is
conjugate to the pure Dirac operator on $W$, so
$\Index(D_{\inv}) = \widehat{A}(W)$.
\end{proof}

\subsubsection{Signature operator} \label{example11}

\begin{proposition} \label{corollary14}
Suppose that ${\mathcal F}$ is transversely oriented and
$\dim({\mathcal T})$ is divisible by four.
Recall the notion of the basic signature
$\sigma(M, {\mathcal F}; {\mathcal D}_M^{\frac12})$
from Subsection \ref{subsection2.7}.
We have
\begin{equation} \label{5.52}
\sigma(M, {\mathcal F}; {\mathcal D}_M^{\frac12})
\: = \:  \sigma(W_{\max}).
\end{equation}
\end{proposition}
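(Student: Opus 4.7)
The plan is to take $D = d + d^*$ to be the basic signature operator. Corollary \ref{corollary8} identifies $\Ind(D_{\inv})$ with the basic signature $\sigma(M, \mathcal{F}; \mathcal{D}_M^{\frac12})$, and Theorem \ref{theorem1} then gives
\[
\sigma(M, \mathcal{F}; \mathcal{D}_M^{\frac12}) \: = \: \int_{W_{\max}} \widehat{A}(TW_{\max}) \: \mathcal{N}_{\mathcal{E}, Q}.
\]
Via Hirzebruch's signature theorem, $\sigma(W_{\max}) = \int_{W_{\max}} L(TW_{\max})$, so the proposition reduces to the integral identity $\int_{W_{\max}} \widehat{A}(TW_{\max}) \, \mathcal{N}_{\mathcal{E}, Q} = \int_{W_{\max}} L(TW_{\max})$.

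To establish this identity I would return to the delocalized formula of Corollary \ref{corollary11}. For the signature operator, the equivariant density $\widehat{A}(X, Y_\alpha) \: \ch(X, \mathcal{E}_\alpha/S)$ on the local model $Y_\alpha$ is (up to normalization) precisely the integrand appearing in the Atiyah-Singer $G$-signature theorem for the $T^k$-action on $Y_\alpha$. Consequently, for each fixed $X$, the sum $\sum_\alpha \int_{Y_\alpha} \widehat{A}(X, Y_\alpha) \: \ch(X, \mathcal{E}_\alpha/S) \: \widetilde{\eta}_\alpha$ is a partition-of-unity-assembled version of the equivariant signatures $\sigma(e^{-X}, Y_\alpha)$ of the local models.

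To compute the $AV_X$-average I would invoke two classical facts. First, since $T^k$ is connected it acts trivially on $H^*(Y_\alpha)$, so $AV_X \: \sigma(e^{-X}, Y_\alpha) = \sigma(Y_\alpha)$. Second, for a smooth action of a connected compact Lie group on a closed oriented manifold one has $\sigma(Y_\alpha) = \sigma(Y_\alpha^{T^k})$, a classical consequence of the $G$-signature theorem. Applying Hirzebruch to each fixed-point manifold and assembling via the partition of unity — using $\sum_\alpha \widetilde{\eta}_\alpha \equiv 1$ on the portion of $W_{\max}$ covered by the various $Y_\alpha^{T^k}$ — produces $\int_{W_{\max}} L(TW_{\max}) = \sigma(W_{\max})$. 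The main technical obstacle is the $Q$-chamber dependence in individual terms of the localized formula: single fixed components may contribute singular terms in $X$, and one must verify that these singularities cancel globally (as in the proof of Corollary \ref{corollary12}), so that the passage from the delocalized partition-of-unity computation to the honest $G$-signature identification is legitimate.
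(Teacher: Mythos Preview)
Your outline has a real gap at the point where you invoke global properties of the model manifolds $Y_\alpha$. The quantities $\int_{Y_\alpha}\widehat{A}(X,Y_\alpha)\,\ch(X,{\mathcal E}_\alpha/S)\,\widetilde{\eta}_\alpha$ are \emph{not} equivariant signatures of $Y_\alpha$: they are partial integrals, weighted by bump functions $\widetilde{\eta}_\alpha$ supported on the embedded local models. The cohomological facts you cite --- that a connected $T^k$ acts trivially on $H^*(Y_\alpha)$, and that $\sigma(Y_\alpha)=\sigma(Y_\alpha^{T^k})$ --- are statements about the closed manifold $Y_\alpha$ as a whole; they say nothing about these cut-off integrals. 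Moreover, the $Y_\alpha$ are artificial closings (doubles) of the local slices, so their global signatures and their full fixed-point sets are not intrinsic to the foliation and carry spurious contributions that the $\widetilde{\eta}_\alpha$ are precisely designed to discard. There is therefore no bridge from your steps invoking $\sigma(e^{-X},Y_\alpha)$ and $\sigma(Y_\alpha^{T^k})$ to the partition-of-unity assembly at the end.

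The paper's argument avoids this by working \emph{after} localization. By Corollary~\ref{corollary12} one already has, for any admissible $Q$,
\[
\Index(D_{\inv}) \;=\; AV_X \sum_i \int_{Z_i^{T^k}} L(TZ_i^{T^k})\,\Phi(e^{-X+Q}),
\]
where the $\widetilde{\eta}_\alpha$ have been summed away on $W_{\max}$ and $\Phi$ is built from the explicit factors
\[
{\mathcal F}_{\sign}(L,z)=\prod_j \frac{z^{-1/2}e^{x_j/2}+z^{1/2}e^{-x_j/2}}{z^{-1/2}e^{x_j/2}-z^{1/2}e^{-x_j/2}}.
\]
The key manoeuvre is then purely algebraic: replace $Q$ by $\lambda Q$ with $\Image(Q)$ in the open chamber and let $\lambda\to\infty$. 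Each factor of ${\mathcal F}_{\sign}$ tends to $\pm 1$, and with the correct signs $\Phi(e^{-X+\lambda Q})\to 1$. Since the right-hand side is independent of $Q$, this gives $\Index(D_{\inv})=\sum_i\int_{Z_i^{T^k}}L(TZ_i^{T^k})=\sigma(W_{\max})$. This $Q\to\infty$ limit is the idea your proposal is missing.
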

\begin{proof}
From Corollary \ref{corollary8},
$\sigma(M, {\mathcal F}; {\mathcal D}_M^{\frac12})$ equals the index
of $D_{\inv}$ when $D$ is the operator
$d + d^*$ and the $\Z_2$-grading comes from the Hodge duality operator.
A component $Z^{T^k}_i$ of $W_{\max}$ acquires a natural orientation.
Given $z \neq 1$ and a complex $r$-dimensional vector bundle $L$,
put
\begin{equation} \label{4.25}
{\mathcal F}_{\sign}(L,z) \: = \: \prod_{j=1}^r \frac{
z^{-\frac12} \: e^{\frac{x_j}{2}} \: + \:
z^{\frac12} \: e^{-\frac{x_j}{2}} }{
z^{-\frac12} \: e^{\frac{x_j}{2}} \: - \:
z^{\frac12} \: e^{-\frac{x_j}{2}} }.
\end{equation}
Then
\begin{equation} \label{Leqn}
\Index(D_{\inv}) \: = \: AV_X \sum_i \int_{Z^{T^k}_i}
L(TZ^{T^k}_i) \:
\Phi(e^{-X+Q}),
\end{equation}
where
\begin{equation}
\Phi(e^{-X+Q}) \: = \: \pm \:
\prod_q {\mathcal F}_{\sign} \left( N_{q,i},
e^{- \sqrt{-1} {\bf n}_{q,i} \cdot (X-Q)} \right).
\end{equation}

Take $Q$ so that $\Image(Q) \in \bigcap_i \bigcap_q
(\R^k - {\bf n}_{q,i}^\perp)$.
Consider the effect of multiplying $Q$ by $\lambda > 0$.
From the structure of (\ref{4.25}), and taking the signs into account,
the limit as $\lambda \rightarrow \infty$ of
$\Phi(e^{-X+ \lambda Q})$ is $1$.
Thus $\Index(D_{\inv}) = \sum_i \int_{Z^{T^k}_i} L(TZ^{T^k}_i)$,
which equals the signature of $W_{\max}$.
\end{proof}

\subsubsection{Euler operator} \label{example12}

\begin{proposition} \label{corollary15}
Suppose that $\dim({\mathcal T})$ is even.
Recall the notion of the basic Euler characteristic
$\chi(M, {\mathcal F}; {\mathcal D}_M^{\frac12})$
from Subsection \ref{subsection2.7}.
We have
\begin{equation} \label{5.53}
\chi(M, {\mathcal F}; {\mathcal D}_M^{\frac12}) \: =
\: \chi(W_{\max}).
\end{equation}
\end{proposition}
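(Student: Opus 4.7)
The plan is to parallel the proof of Proposition \ref{corollary14}, with the Euler operator $D=d+d^{*}$ replacing the signature operator. By Corollary \ref{corollary8}, the quantity $\chi(M,{\mathcal F};{\mathcal D}_M^{\frac12})$ equals $\Index(D_{\inv})$ when ${\mathcal E}=\Lambda^{*}T^{*}{\mathcal T}$ is given the $\Z_2$-grading by form-degree parity, so I will compute this index by applying Corollary \ref{corollary11}.

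The first step is to identify the Berline-Getzler-Vergne integrand for the de Rham complex. For ${\mathcal E}=\Lambda^{*}T^{*}$, the product $\widehat{A}(X,Y_\alpha)\,\ch(X,{\mathcal E}_\alpha/S)$ collapses to the equivariant Euler form $e^{T^{k}}(TY_\alpha)(X)=\operatorname{Pf}\bigl((R+\mu(X))/(2\pi)\bigr)$. The Atiyah-Bott-Berline-Vergne fixed-point theorem then localizes $\int_{Y_\alpha} e^{T^{k}}(TY_\alpha)(X)\,\widetilde{\eta}_\alpha$ onto the components of the $T^{k}$-fixed-point set meeting $Y_\alpha$, and the equivariant normal Euler factor coming from the splitting $TY_\alpha|_{Z^{T^{k}}_i}=TZ^{T^{k}}_i\oplus N_i$ cancels against the denominator produced by the localization theorem. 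Summing over $\alpha$ and using $\sum_\alpha \widetilde{\eta}_\alpha=1$ on each $Z^{T^{k}}_i$ yields
\[
\sum_\alpha \int_{Y_\alpha} e^{T^{k}}(TY_\alpha)(X)\,\widetilde{\eta}_\alpha \: = \: \sum_i \int_{Z^{T^{k}}_i} e(TZ^{T^{k}}_i) \: = \: \sum_i \chi(Z^{T^{k}}_i) \: = \: \chi(W_{\max}).
\]
Since the right-hand side is independent of $X$, applying $AV_X$ is trivial and produces the identity $\Index(D_{\inv})=\chi(W_{\max})$.

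The hard part will be verifying rigorously the cancellation in the localization step; in the language of Subsection \ref{local} this amounts to showing that ${\mathcal N}_{{\mathcal E},Q}\cdot\widehat{A}(TW_{\max})=e(TW_{\max})$ identically in $Q$. Unlike the signature argument in Proposition \ref{corollary14}, no passage to a limit $|Q|\to\infty$ is needed here: the classical Atiyah-Bott Lefschetz identity $L(g,D)=\chi(Z^g)$ for the de Rham complex shows that all dependence on $X$ and $Q$ vanishes before averaging. I expect this cancellation, rather than any analytic issue, to be the only subtlety in writing out the full proof.
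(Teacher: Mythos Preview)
Your proposal is correct and follows essentially the same route as the paper: identify $\chi(M,{\mathcal F};{\mathcal D}_M^{1/2})$ with $\Index(D_{\inv})$ for the Euler operator via Corollary \ref{corollary8}, apply the delocalized/localized index formula, and use the standard fact that for the de Rham complex the equivariant normal contribution cancels against the localization denominator, leaving $\sum_i\int_{Z_i^{T^k}} e(TZ_i^{T^k})=\chi(W_{\max})$ with no residual $X$- or $Q$-dependence. The paper's proof is terser (it simply asserts the localized formula with the Euler form), but the content is the same, and your observation that no $|Q|\to\infty$ limit is needed here, in contrast to the signature case, is exactly right.
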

\begin{proof}
From Corollary \ref{corollary8},
$\chi(M, {\mathcal F}; {\mathcal D}_M^{\frac12})$ equals the index
of $D_{\inv}$ when $D$ is the operator
$d + d^*$ and the $\Z_2$-grading comes from the form degree.
Then
\begin{equation}
\Index(D_{\inv}) \: = \: AV_X \sum_i \int_{Z^{T^k}_i}
e(TZ^{T^k}_i),
\end{equation}
where
$e$ denotes the Euler form.
Thus $\Index(D_{\inv}) = \sum_i \chi(Z^{T^k}_i)$,
which equals the Euler characteristic of $W_{\max}$.
\end{proof}

\bibliographystyle{acm}

\begin{thebibliography}{9}

\bibitem{Atiyah (1974)} {\it M. Atiyah},
\underline{Elliptic operators and compact groups},
Lecture Notes in Mathematics 401, Springer-Verlag, Berlin-New York (1974).

\bibitem{Atiyah-Bott (1968)} {\it M. Atiyah and R. Bott},
A Lefschetz fixed point formula for elliptic complexes II,
Ann. of Math. {\bf 88} (1968), 451--491.

\bibitem{Atiyah-Hirzebruch (1970)}
{\it M. Atiyah and F. Hirzebruch},
Spin-manifolds and group actions, in
\underline{Essays on Topology and Related Topics (M\'emoires d\'edi\'es \`a
Georges de Rham)}, Springer, New York (1970), 18--28.

\bibitem{Atiyah-Patodi-Singer (1975)}
{\it M. Atiyah, V. Patodi and I. M. Singer},
Spectral asymmetry and Riemannian geometry I,
Math. Proc. Cambridge Philos. Soc. {\bf 77} (1975), 43--69.

\bibitem{Atiyah-Singer (1968)} {\it M. Atiyah and I. M. Singer},
The index of elliptic operators III, Annals of Math. {\bf 87} (1968), 546--604.

\bibitem{Berline-Getzler-Vergne (1992)} {\it N. Berline, E. Getzler and
M. Vergne},
\underline{Heat kernel and Dirac operators}, Springer-Verlag, New York (1992).

\bibitem{Berline-Vergne (1996)} {\it N. Berline and M. Vergne},
L'indice \'equivariant des op\'erateurs transversalement elliptiques,
Invent. Math. {\bf 124} (1996), 51--101.

\bibitem{Bismut (1985)} {\it J.-M. Bismut},
The infinitesimal Lefschetz formulas: a heat equation proof,
J. of Funct. Analysis {\bf 62} (1985), 435--457.

\bibitem{Bridson-Haefliger (1999)} {\it M. Bridson and A. Haefliger},
\underline{Metric spaces of non-positive curvature}.
Grundlehren der Mathematischen Wissenschaften 319, Springer-Verlag, Berlin (1999).

\bibitem{Bruening-Heintze (1984)}
{\it J. Br\"uning and E. Heintze},
The asymptotic expansion of Minakshisundaram-Pleijel in the equivariant
case,
Duke Math. J. {\bf 51} (1984), 959--980.

\bibitem{Bruening-Kamber-Richardson1 (2010)} {\it J. Br\"uning, F. Kamber and
K. Richardson}, The eta invariant and equivariant index theorem of
transversally elliptic operators,
preprint, http://arxiv.org/abs/1005.3845 (2010).

\bibitem{Bruening-Kamber-Richardson2 (2010)} {\it J. Br\"uning, F. Kamber and
K. Richardson}, Index theory for basic Dirac
operators on Riemannian foliations,
preprint, http://arxiv.org/abs/1008.1757 (2010).

\bibitem{Bruening-Kamber-Richardson3 (2010)} {\it J. Br\"uning, F. Kamber and
K. Richardson}, The equivariant index theorem for transversally elliptic
operators and the basic index theorem for Riemannian foliations,
Electron. Res. Announc. Math. Sci. {\bf 17} (2010), 138--154.

\bibitem{Cheeger-Gromov-Taylor (1982)} {\it J. Cheeger, M. Gromov and
M. Taylor}, Finite propagation speed, kernel estimates for functions
of the Laplace operator, and the geometry of complete Riemannian
manifolds, J. Diff. Geom. {\bf 17} (1982), 15--53.

\bibitem{Connes (1994)} {\it A. Connes},
\underline{Noncommutative geometry}, Academic Press, San Diego (1994).

\bibitem{Connes-Skandalis (1984)} {\it A. Connes and G. Skandalis},
The longitudinal index theorem for foliations,
Publ. Res. Inst. Math. Sci. {\bf 20} (1984), 1139--1183.

\bibitem{Crainic (2003)} {\it M. Crainic},
Differentiable and algebroid cohomology, van Est isomorphisms, and
characteristic classes, Comm. Math. Helv. {\bf 78} (2003), 681--721.

\bibitem{El-Kacimi (1990)} {\it A. El Kacimi}, Op\'erateurs transversalement
elliptiques sur un feuilletage riemannien et applications,
Compositio Math. {\bf 73} (1990), 57--106.

\bibitem{El-Kacimi2 (1993)} {\it A. El Kacimi and M. Nicolau},
On the topological
invariance of the basic cohomology, Math. Ann. {\bf 295} (1993), 627--634.

\bibitem{Getzler-Szenes (1989)}
{\it E. Getzler and  A. Szenes},
On the Chern character of a theta-summable Fredholm module,
J. Funct. Anal. {\bf 84} (1989), 343--357.

\bibitem{Glazebrook-Kamber (1991)} {\it J. Glazebrook and  F. Kamber},
Transversal Dirac families in Riemannian foliations,
Comm. Math. Phys. {\bf 140} (1991), 217--240.

\bibitem{Gorokhovsky-Lott (2003)} {\it A. Gorokhovsky and  J. Lott},
Local index theory over \'etale groupoids,
J. Reine Angew. Math. {\bf 560} (2003), 151--198.

\bibitem{Gorokhovsky-Lott (2006)} {\it A. Gorokhovsky and  J. Lott},
Local index theory over foliation groupoids,
Adv. Math. {\bf 204} (2006), 413-447.

\bibitem{Grubb (1999)} {\it G. Grubb},
Trace expansions for pseudodifferential boundary problems for Dirac-type operators and more general systems,
Ark. Mat. {\bf 37} (1999), 45--86.

\bibitem{Habib-Richardson (2009)} {\it G. Habib and  K. Richardson},
A brief note on the spectrum of the basic Dirac operator,
Bull. London Math. Soc. {\bf 41} (2009), 683--690.

\bibitem{Haefliger (1985)} {\it A. Haefliger},
Pseudogroups of local isometries, in
\underline{Differential geometry (Santiago de Compostela, 1984)},
Res. Notes in Math. 131, Pitman, Boston (1985), 174--197.

\bibitem{Haefliger (1988)} {\it A. Haefliger},
Leaf closures in Riemannian foliations, in
\underline{A f\^ete of topology},
Academic Press, Boston (1988), 3--32.

\bibitem{Haefliger (1989)} {\it A. Haefliger},
Feuilletages riemanniens, in S\'eminaire Bourbaki, Ast\'erisque {\bf 177-178}
(1989), 183--197.

\bibitem{Haefliger-Salem (1988)} {\it A. Haefliger and  E. Salem},
Pseudogroupes d'holonomie des feuilletages riemanniens sur des
vari\'et\'es compactes $1$-connexes, in
\underline{G\'eom\'etrie diff\'erentielle (Paris, 1986)},
Travaux en Cours {\bf 33}, Hermann, Paris (1988), 141--160.

\bibitem{Haefliger-Salem (1990)} {\it A. Haefliger and  E. Salem},
Riemannian foliations on simply connected manifolds and actions of tori
on orbifolds,
Illinois J. Math. {\bf 34} (1990), 706--730.

\bibitem{Jaffe-Lesniewski-Osterwalder (1988)}
{\it A. Jaffe, A. Lesniewski and  K. Osterwalder},
Quantum $K$-theory I. The Chern character,
Comm. Math. Phys. {\bf 118} (1988), 1--14.

\bibitem{Kawasaki (1978)} {\it T. Kawasaki}, The signature theorem for
$V$-manifolds, Topology {\bf 17} (1978), 75--83.

\bibitem{Kawasaki (1981)} {\it T. Kawasaki}, The index of elliptic operators
over $V$-manifolds, Nagoya Math. J. {\bf 84} (1981), 135--157.

\bibitem{Lott (2007)} {\it J. Lott},
On the long-time behavior of type-III Ricci flow solutions,
Math. Ann. {\bf 339} (2007), 627--666.

\bibitem{Lott (2010)} {\it J. Lott},
Dimensional reduction and  the long-time behavior of type-III Ricci flow solutions, Comm. Math. Helv. {\bf 85} (2010), 485--534.

\bibitem{Moerdijk-Mrcun (2003)} {\it I. Moerdijk and
J. Mr$\check{\text{c}}$un},
\underline{Introduction to foliations and Lie groupoids},
Cambridge Studies in Advanced Mathematics 91, Cambridge University Press, Cambridge
(2003).

\bibitem{Molino (1988)} {\it P. Molino}, \underline{Riemannian foliations},
Progress in Mathematics 73, Birkh\"auser, Boston (1988).

\bibitem{Paradan-Vergne (2008)} {\it P.-E. Paradan and M. Vergne},
Index of transversally elliptic operators,
Ast\'erisque {\bf 328} (2009), 297--338.

\bibitem{Petrunin-Tuschmann (1999)} {\it A. Petrunin and W. Tuschmann},
Diffeomorphism finiteness, positive pinching, and second homotopy,
Geom. Funct. Anal. {\bf 9} (1999), 736--774.

\bibitem{Richardson (1998)} {\it K. Richardson},
The asymptotics of heat kernels on Riemannian foliations,
Geom. Funct. Anal. {\bf 8} (1998), 356--401.

\bibitem{Salem (1988a)} {\it E. Salem}, Riemannian foliations and pseudogroups of isometries,
Appendix  D in \cite{Molino (1988)}.

\bibitem{Salem (1988b)} {\it E. Salem}, Une g\'en\'eralisation du th\'eor\`eme de Myers-Steenrod aux pseudogroupes
d'isom\'etries,
Ann. Inst. Fourier {\bf 38} (1988), 185--200.

\bibitem{Sergiescu (1985)} {\it V. Sergiescu}, Cohomologie basique et dualit\'e
des feuilletages riemanniens, Annales de l'Institut Fourier {\bf 35} (1985),
137--158.

\bibitem{Tu (1999)} {\it J.-L. Tu}, La conjecture de Novikov pour les
feuilletages hyperboliques, K-Theory {\bf 16} (1999), 129--184.

\bibitem{Tu (2000)} {\it J.-L. Tu}, The Baum-Connes conjecture for groupoids,
in \underline{$C^*$-algebras (Münster, 1999)}, Springer, Berlin, (2000),
227--242.
\end{thebibliography}

\end{document}